\documentclass[10pt,twoside]{amsart}
  \usepackage{amsmath,amssymb,amsthm,amscd,latexsym,graphicx}
  \usepackage{hyperref}
  
 \usepackage{epigraph}
 \usepackage[OT2,OT1]{fontenc}
  \usepackage{enumitem}
\usepackage{tikz}
\usepackage{graphicx}
\usepackage[all]{xy}

 \textwidth12.5cm\textheight21.5cm

 \font\caps=cmcsc10     
 \font\Caps=cmcsc10 scaled \magstep1 

 \entrymodifiers={!!<0pt,0.7ex>+}

 \pagestyle{myheadings}
 \pagenumbering{arabic}
 \setcounter{page}{1}

 \makeatletter
 \setlength\topmargin {14\p@}
 \setlength\headsep {15\p@}
 \setlength\footskip {25\p@}
 \setlength\parindent {0\p@}
 \@specialpagefalse\headheight=8.5pt
 \makeatother

 \def\TSkip{\medskip}
 \newbox\TheTitle{\obeylines\gdef\GetTitle #1
 \ShortTitle #2
 \SubTitle #3
 \Author  #4
 \ShortAuthor #5
 \EndTitle
 {\setbox\TheTitle=\vbox{\baselineskip=20pt\let\par=\cr\obeylines%
 \halign{\centerline{\Caps##}\cr\noalign{\medskip}\cr#1\cr}}%
   \copy\TheTitle\TSkip\TSkip%
 \def\next{#2}\ifx\next\empty\gdef\STitle{#1}\else\gdef\STitle{#2}\fi%
 \def\next{#3}\ifx\next\empty%

 \else\setbox\TheTitle=\vbox{\baselineskip=20pt\let\par=\cr\obeylines%
  \halign{\centerline{\caps##} #3\cr}}\copy\TheTitle\TSkip\TSkip\fi%
 \centerline{\caps #4}\TSkip\TSkip%
 \def\next{#5}\ifx\next\empty\gdef\SAuthor{#4}\else\gdef\SAuthor{#5}\fi%
 \catcode'015=5}}

 \long\def\MSC#1\EndMSC{\def\arg{#1}\ifx\arg\empty\relax\else
  {\par\narrower\noindent%
  2000 Mathematics Subject Classification: #1\par}\fi}

 \long\def\KEY#1\EndKEY{\def\arg{#1}\ifx\arg\empty\relax\else
   {\par\narrower\noindent Keywords and Phrases: #1\par}\fi\TSkip}

 \long\def\DATE#1\EndDATE{\def\arg{#1}\ifx\arg\empty\relax\else
   {\par\narrower\noindent \center{\textit{#1}}\par}\fi\TSkip\TSkip\TSkip}

 \hfuzz=0.1pt\tolerance=2000\emergencystretch=20pt\overfullrule=0pt

 \renewcommand{\U}{\mathbf{U} }

\newcommand{\obs}{ \mathrm {obs}}

  \DeclareSymbolFont{cyrletters}{OT2}{wncyr}{m}{n}
   \DeclareMathSymbol{\Sha}{\mathalpha}{cyrletters}{"58}

    \newcommand{\nat}{\mathrm{nat}}

 \renewcommand{\to}{\longrightarrow}

 \newcommand{\cE}{\mathcal E}

 \newcommand{\C}{\mathbb{C}}
 \newcommand{\F}{\mathbb{F}}
 \newcommand{\G}{\mathbf{G}}

 \newcommand{\N}{\mathbb{N}}
 \renewcommand{\P}{\mathbb P}
 \newcommand{\Q}{\mathbb{Q}}
 \newcommand{\R}{\mathbb{R}}
 \newcommand{\Z}{\mathbb{Z}}

\renewcommand{\2}{{\mathbb {Z} / 2 \mathbb Z}}

 \newcommand{\Res}{\mathrm{Res}}
  \newcommand{\Inf}{\mathrm{Inf}}

 \newcommand{\GL}{\mathbf{GL}}


 \newcommand{\Aut}{\mathbf{Aut}}
 \newcommand{\End}{\mathbf{End}}

 \newcommand{\Br}{\mathrm{Br}}
 
 \newcommand{\ovl}{\overline}
 \newcommand{\vareps}{{\varepsilon}}

 \newcommand{\Ext}{\mathrm{Ext}}
\newcommand{\Gal}{\mathrm{Gal}}
 
 \newcommand{\Hom}{\mathrm{Hom}}
 \newcommand{\im}{\mathrm{im}}
 \newcommand{\Ver}{\mathrm{Ver}}
 \newcommand{\Frob}{\mathrm{Frob}}

 \newcommand{\B}{\mathbf{B}}

\newcommand{\Id}{\mathrm{Id}}

\newcommand{\Ker}{\mathrm{Ker}}


\newcommand{\W}{\mathbf{W}}


 \theoremstyle{plain}
 \newtheorem{thm}{Theorem}[section]
   \newtheorem*{thm*}{Theorem}
 \newtheorem{defi}[thm]{Definition}

 \newtheorem{prop}[thm]{Proposition}

 \newtheorem{lem}[thm]{Lemma}
  
 \newtheorem{coro}[thm]{Corollary}
 \newtheorem*{coro*}{Corollary}

 \newtheorem*{theorem-non}{Bloch-Kato conjecture, an equivalent formulation}
\newtheorem*{thmA*}{Theorem A}
\newtheorem*{thmB*}{Theorem B}
\newtheorem*{thmC*}{Theorem C}
\newtheorem*{thmD*}{Theorem D}

 \theoremstyle{remark}
 \newtheorem{rem}[thm]{Remark}
  \newtheorem*{rem*}{Remark}

 \newtheorem{ex}[thm]{Example}
 
 \newtheorem{exo}[thm]{Exercise}

 \newenvironment{dem}{{\bf Proof.}}{\hfill$\square$}

\begin{document}


 \title{Lifting Galois representations via Kummer flags}

 \author{Andrea Conti, Cyril Demarche and Mathieu Florence}

\address{Andrea Conti, Heidelberg University, 69120 Heidelberg, Germany \\
Cyril Demarche and Mathieu Florence, Sorbonne Université and Université Paris Cité, CNRS, IMJ-PRG, F-75005 Paris, France.}

\begin{abstract}

Let  $\Gamma$ be either  i) the absolute Galois group of a local field $F$, or  ii) the topological fundamental group of a closed connected orientable surface of genus $g$. In case i), assume that $\mu_{p^2} \subset F$ . We give an elementary and unified proof that every   representation  $\rho_1: \Gamma \to \GL_d(\F_p)$ lifts to a representation  $\rho_2: \Gamma \to \GL_d(\Z/p^2)$. [In case i), it is understood these are  continuous.] The actual statement is   much stronger: for all $r \geq 1$, assuming $\mu_{p^{r+1}} \subset F$ in the local field case, strictly upper  triangular representations  $\rho_r: \Gamma \to \U_d(\Z/p^r)$ lift  to  $\rho_{r+1}: \Gamma \to \U_d(\Z/p^{r+1})$, in the strongest possible step-by-step sense. Here ``suitable" is made precise by the concept of  \textit{Kummer flag}. An essential aspect of this work is to identify  the common properties of groups i) and ii), that suffice to ensure the existence of such lifts.
\end{abstract}
\maketitle
\newpage
\tableofcontents

\section{Introduction}
Let $p$ be a prime. 
This paper presents a common framework to tackle lifting problems (from $\Z/p^r$ to $\Z/p^{r+1}$) for representations of two kinds of groups $\Gamma$: absolute Galois groups of local fields (which we refer to as the arithmetic case), and topological fundamental groups of closed connected orientable surfaces (the topological case). 
The challenge here is to achieve a reasonable degree of generality (say, in the arithmetic case, compared to the results of \cite{BIP} and \cite{EG}), with a light toolkit and little effort. 

Using local class field theory in the arithmetic case, and Poincar\'e duality in the topological case -- both being very standard results -- we derive common properties of groups of the above type. Namely, in both cases, the group $\Gamma$ is \textit{$p$-manageable} (Definition \ref{defpmana}). This notion combines the concepts of a (possibly non-finitely generated) Demu$\check{\textup{s}}$kin group (Definition \ref{Defidemu}), and a weak version of Hilbert's Theorem 90, called the wH90 property (Definition \ref{def wH90}). Our lifting theorems are valid for all $p$-manageable profinite groups. These include two-dimensional Poincaré groups, see Section \ref{PDG}. Via basic operations for extensions of representations of (pro-)finite groups, we are able to lift a reducible $\Z/p^r$-representation $\rho_r$ of $\Gamma$ to a $\Z/p^{r+1}$-representation, whenever we can equip $\rho_r$ with a \emph{Kummer flag}: loosely speaking, this means that $\rho_r$ is unipotent, and every sub-extension of $\rho_r$ which is trivial mod $p$ is also trivial mod $p^r$. This is a combinatorial property, for which we refer to Definition \ref{def kummer}. 

We give a more precise statement of our results. To fix ideas, let $\Gamma$ be one of the following groups: 
\begin{itemize}
	\item the absolute Galois group of a local field $F$ ($\R$, or a finite extension of $\Q_\ell$ or $\F_\ell((t))$, with $\ell=p$ allowed);
	\item the topological fundamental group of a closed connected orientable surface.
\end{itemize}

Then our Theorem \ref{ThmLiftK} implies the following statement. Observe that the assumption on roots of unity is necessary here -- as demonstrated in Example \ref{ex5dim}.

\begin{thm*}
Let $d,r \geq 1$ be two integers, and let $\rho_r\colon\Gamma\to\GL_d(\Z/p^r)$ be a continuous representation equipped with a Kummer flag. In the arithmetic case, assume that $F$ contains a primitive $p^{r+1}$-th root of unity.

Then $\rho_r$ admits a lift $\rho_{r+1}\colon\Gamma\to\GL_d(\Z/p^{r+1})$, equipped with a Kummer flag lifting the given one on $\rho_r$.
\end{thm*}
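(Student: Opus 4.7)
The plan is to proceed by induction on the dimension $d$, using the Kummer flag of $\rho_r$ to peel off subrepresentations one at a time. The base case $d=1$ is immediate: a unipotent rank-one representation is trivial, and lifts trivially together with its Kummer flag. For the inductive step, I would isolate the bottom step of the flag to write
$$0 \longrightarrow \1 \longrightarrow \rho_r \longrightarrow \rho_r' \longrightarrow 0,$$
where $\rho_r'\colon\Gamma\to\GL_{d-1}(\Z/p^r)$ inherits a Kummer flag of length $d-1$. By the inductive hypothesis, $\rho_r'$ lifts to $\rho_{r+1}'\colon\Gamma\to\GL_{d-1}(\Z/p^{r+1})$ carrying a compatible Kummer flag.

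It remains to lift the extension to a representation valued in $\GL_d(\Z/p^{r+1})$ with bottom step compatible with the flag. As a $\Gamma$-module extension, $\rho_r$ is classified by a class $c_r\in H^1(\Gamma,\rho_r')$, and producing a lift $\rho_{r+1}$ amounts to choosing a preimage of $c_r$ in $H^1(\Gamma,\rho_{r+1}')$. The long exact sequence in cohomology attached to
$$0 \longrightarrow \rho_{r+1}'\otimes\F_p \longrightarrow \rho_{r+1}' \longrightarrow \rho_r' \longrightarrow 0$$
places the obstruction in $H^2(\Gamma,\rho_{r+1}'\otimes\F_p)$ and parametrises the fibre of lifts by $H^1(\Gamma,\rho_{r+1}'\otimes\F_p)$. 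Here the two structural hypotheses on $\Gamma$ enter: $p$-manageability controls the relevant $H^2$ groups and reduces the obstruction to a question about Kummer classes, while W90 furnishes the key surjectivity $H^1(\Gamma,\mu_{p^{r+1}})\twoheadrightarrow H^1(\Gamma,\mu_{p^r})$ that lets one lift Kummer characters from level $r$ to level $r+1$. The Kummer flag hypothesis on $\rho_r$ is precisely what guarantees that $c_r$ lies in the subspace of $H^1$ reachable by W90.

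The main obstacle I expect is the closing-up step: the new lift $\rho_{r+1}$ must itself carry a Kummer flag extending the one on $\rho_{r+1}'$. This does not come for free, since the Kummer flag condition imposes a rigidity requirement on every sub-extension that is trivial mod $p$. To secure it, one must not merely pick an arbitrary preimage of $c_r$, but the preimage supplied by the W90 lift of a Kummer character at level $p^{r+1}$; a direct check on the new bottom step then shows that every sub-extension of $\rho_{r+1}$ trivial mod $p$ descends from a Kummer class at level $p^{r+1}$, and is therefore trivial mod $p^{r+1}$. This closing-up is exactly what the notion of Kummer flag is engineered to make possible, and it is what renders $p$-manageability and W90 jointly sufficient for the lifting theorem.
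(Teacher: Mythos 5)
Your proposal captures the broad contour (induction on dimension, peel off a rank-one step, control the new extension class) but it skips over precisely the places where the paper has to do real work, and the gaps are not cosmetic.

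First, the obstruction to lifting the extension class lives in $H^2$ and is \emph{not} automatically zero, and W90 does nothing to kill it: W90 only controls $H^1$ of rank-one twists (Kummer characters), whereas $c_r$ sits in $H^1(\Gamma,(\rho_r')^\vee)$ with coefficients of rank $d-1$. The paper's mechanism for killing the obstruction is Lemma \ref{LemManaLift} (a consequence of $p$-manageability, not of W90): if a two-step subquotient of the flag is a non-split extension with class $p_1\ne 0$, then cupping with $p_1$ is \emph{surjective} onto $H^2(\Gamma,L_{d,1}^\vee\otimes L_{1,1})$, so one may \emph{adjust} the intermediate lift $\nabla^{\flat}_{d-1,r+1}$ by a central automorphism $\Id+p^r\iota(\epsilon)$ to cancel the obstruction. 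You never describe a modification of the lift; you speak only of ``choosing a preimage of $c_r$,'' which is insufficient when the reduction map on $H^1$ is not onto.

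Second, your induction is too weak to close. The paper's Theorem \ref{ThmLiftK} proves two intertwined statements — one can lift the flag compatibly with a prescribed Kummer lift of either the truncation $\nabla_{d-1,r}$ or the quotient $\nabla_{d/1,r}$ — and the inductive step genuinely needs \emph{both} at once: it takes the given $\nabla^\sharp_{d-1,r+1}$ (quotient side), uses the induction hypothesis to produce a compatible $\nabla^\flat_{d-1,r+1}$ (truncation side), and then \emph{glues} them using the gluifting framework of Section \ref{Sectiongluing}. Your single-step peeling has no analogue of this gluing and gives you no handle on the top of the flag. Finally, the check that the resulting lift is a Kummer flag is not a ``direct check on the new bottom step'': the paper must split into four cases according to the values $i_1(k)$ (whether the flag splits at various indices), and the split cases (where some $i_1(k)=0$ or $1$) are treated by entirely different constructions (cartesian squares, pushouts, direct sums) rather than by cup-product adjustment. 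Your proposal collapses this case analysis into an unsubstantiated assertion that the Kummer property holds.
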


For mod $p$ unipotent representations, there is the following immediate corollary. Here again, the assumption on roots of unity is necessary.

\begin{coro*}
Let $\rho_1 \colon\Gamma\to\U_d(\Z/p) \subset \GL_d(\Z/p)$ be a strictly upper triangular representation. In the arithmetic case, assume that $F$ contains a primitive $p^{r+1}$-th root of unity. Then $\rho_1$ lifts, to a strictly upper triangular representation   $\rho_{r+1}\colon\Gamma\to\U_d(\Z/p^{r+1}) \subset \GL_d(\Z/p^{r+1})$.
\end{coro*}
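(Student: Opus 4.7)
The plan is a direct induction on $r$, applying Theorem \ref{ThmLiftK} at each step. The main point is that a strictly upper triangular $\rho_1$ carries a tautological Kummer flag, and the theorem produces a lift with a Kummer flag which, by the unipotence built into that notion, is again strictly upper triangular.

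First, I would endow $\rho_1$ with a Kummer flag by taking the standard filtration $0 = V_0 \subset V_1 \subset \cdots \subset V_d = (\Z/p)^d$ by coordinate subspaces. This flag is stable under $\rho_1 \subset \U_d(\Z/p)$, and each graded piece $V_i/V_{i-1}$ is the trivial character. At level $r=1$ the Kummer condition ``every sub-extension trivial mod $p$ is trivial mod $p^r$'' is tautologically satisfied, so this is genuinely a Kummer flag in the sense of Definition \ref{def kummer}.

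Next, I would iterate Theorem \ref{ThmLiftK}. Assume inductively that $\rho_k \colon \Gamma \to \U_d(\Z/p^k)$ is a lift of $\rho_1$ equipped with a Kummer flag lifting the standard one. Since $\mu_{p^{r+1}} \subset F$, the hypothesis $\mu_{p^{k+1}} \subset F$ holds for all $1 \leq k \leq r$, and the theorem produces a lift $\rho_{k+1} \colon \Gamma \to \GL_d(\Z/p^{k+1})$ equipped with a Kummer flag extending the one on $\rho_k$. Because a Kummer flag is unipotent by definition, $\rho_{k+1}$ is strictly upper triangular in the basis adapted to this flag, hence conjugate into $\U_d(\Z/p^{k+1})$. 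After $r$ applications one reaches the desired $\rho_{r+1}$.

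The real difficulty lies entirely in Theorem \ref{ThmLiftK}, where the substantive lifting is carried out using the $p$-manageable and W90 hypotheses on $\Gamma$; there the Kummer flag structure is exploited to unobstruct the step-by-step lifting problem. Given the theorem, the corollary is a matter of unwinding definitions: the only point to verify on the corollary side is that the standard flag on $\rho_1$ is Kummer, which is trivial because the Kummer condition for $r=1$ reduces to a tautology. I would expect no further obstacle in the argument sketched above.
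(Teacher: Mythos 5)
Your proposal is correct and takes the same route the paper intends: the paper states this corollary as ``immediate'' from Theorem \ref{ThmLiftK} without spelling out the details, and your unwinding (observing via Remark \ref{RemKum1} that the standard unipotent flag on $\rho_1$ is automatically Kummer, then iterating the theorem $r$ times, using $\mu_{p^{r+1}}\subset F$ to guarantee the hypothesis $\Z/p^{k+1}(1)\simeq\Z/p^{k+1}$ at each step $k\le r$) is exactly the expected argument. One small imprecision: rather than saying $\rho_{k+1}$ is ``conjugate into $\U_d$,'' you should note that the isomorphism $\psi_k\colon \nabla_{d,k+1}\otimes\Z/p^k\cong\nabla_{d,k}$ built into the lift lets you choose a compatible adapted basis mod $p^{k+1}$, so that $\rho_{k+1}$ lands in $\U_d(\Z/p^{k+1})$ and \emph{genuinely reduces} to $\rho_k$, not merely to a conjugate of it.
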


The proof of Theorem \ref{ThmLiftK} goes by induction on the dimension of the representation, and actually gives a much finer result: given a $(d-1)$-dimensional Kummer flag $\nabla_{d-1,r}$ mod $p^r$, an extension of $\nabla_{d-1,r}$ to a $d$-dimensional mod $p^r$ Kummer flag $\nabla_{d,r}$, and a lift $\nabla_{d-1,r+1}$ of $\nabla_{d-1,r}$ to a $(d-1)$-dimensional Kummer flag mod $p^{r+1}$, we can ``glue'' $\nabla_{d,r}$ and $\nabla_{d-1,r+1}$ along $\nabla_{d-1,r}$ to obtain a $d$-dimensional Kummer flag mod $p^{r+1}$ that lifts $\nabla_{d,r}$ and extends $\nabla_{d-1,r+1}$ (a process that we refer to as ``gluifting''). 
This stronger statement has the following cohomological consequence.

\begin{coro*}(See Corollary \ref{CoroLiftK}) \\
Let $r \geq 2$ be an integer. Assume, in the arithmetic case, that $F$ contains a primitive $p^r$-th root of unity. Let $V_1$ be a unipotent representation of $\Gamma$ over $\F_p$. There exists a lift of $V_1$, to a unipotent representation $V_r$ over $\Z/p^r$, such that the natural map  \[ H^1(\Gamma, V_r) \to  H^1(\Gamma, V_1=V_r/p)\] is surjective.
\end{coro*}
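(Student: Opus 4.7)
The plan is to apply Theorem \ref{ThmLiftK} twice: first to fix a lift $V_r$ of $V_1$, and then in its refined ``gluifting'' form to lift every cohomology class on $V_1$, viewed as an extension of $\mathbf{1}_{\F_p}$ by $V_1$.

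Since $V_1$ is unipotent over $\F_p$, it admits a complete flag with trivial graded pieces, which we regard as a Kummer flag $\nabla_1^V$ --- at level $r=1$ the Kummer condition is vacuous. Iterating Theorem \ref{ThmLiftK} then yields, for $j = 2, \dots, r$, a unipotent representation $V_j$ over $\Z/p^j$ equipped with a Kummer flag $\nabla_j^V$ that lifts $\nabla_{j-1}^V$. The assumption $\mu_{p^r} \subset F$ ensures that $\mu_{p^j} \subset F$ for every $j \leq r$, so each step is permitted.

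Now fix a class $c \in H^1(\Gamma, V_1)$, which corresponds to a unipotent extension
\[
0 \to V_1 \to W_1 \to \mathbf{1}_{\F_p} \to 0,
\]
and equip $W_1$ with the Kummer flag $\nabla_1^W$ obtained by placing $\mathbf{1}_{\F_p}$ on top of $\nabla_1^V$. Proceeding by induction on $j$: assuming we have constructed a unipotent $W_{j-1}$ over $\Z/p^{j-1}$ with a Kummer flag $\nabla_{j-1}^W$ extending $\nabla_{j-1}^V$, apply the fine ``gluifting'' form of Theorem \ref{ThmLiftK} to $\nabla_{j-1}^V$, $\nabla_{j-1}^W$ and $\nabla_j^V$ (the last of which was fixed in the first part) to produce a Kummer flag $\nabla_j^W$ on a unipotent $W_j$ over $\Z/p^j$ that simultaneously lifts $\nabla_{j-1}^W$ and extends $\nabla_j^V$. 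After $r-1$ iterations one obtains a short exact sequence $0 \to V_r \to W_r \to \mathbf{1}_{\Z/p^r} \to 0$ whose reduction modulo $p$ is the extension defining $c$, so that $[W_r] \in H^1(\Gamma, V_r)$ maps to $c$.

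The main obstacle is the coordination of the two constructions: the lift $V_r$ must be fixed once and for all before $c$ is prescribed, and one is not free to adjust it when lifting $W_1$. This is precisely the purpose of the ``gluifting'' refinement --- as opposed to two independent applications of Theorem \ref{ThmLiftK} --- which forces the lift of the extension to contain the prescribed $V_r$ at every stage. All remaining checks (that $W_r$ is unipotent and that $W_r/V_r \cong \mathbf{1}_{\Z/p^r}$) follow immediately from the unipotence clause of Theorem \ref{ThmLiftK} applied to the flag $\nabla_j^W$, whose graded pieces are trivial $\Z/p^j$-modules by construction.
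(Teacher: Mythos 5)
Your proof is correct and follows essentially the same route as the paper's: the core argument --- realizing $c \in H^1(\Gamma, V_1)$ as a $(d+1)$-dimensional Kummer flag mod $p$ and lifting it step by step via the second bullet of Theorem~\ref{ThmLiftK}, compatibly with the prescribed lifts of the $d$-dimensional truncation --- is precisely the paper's sketch preceding Corollary~\ref{CoroLiftK}. Your first half, iterating Theorem~\ref{ThmLiftK} to produce the chain $V_1, V_2, \dots, V_r$, simply bridges the introduction's phrasing (starting from $V_1$ over $\F_p$) with the body's (which starts from $V$ over $\Z/p^r$ already equipped with a Kummer flag).
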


Via classical cohomological arguments, our main theorem recovers the liftability of arbitrary (=not necessarily triangularizable) representations from $\F_p$ to $\Z/p^2$.

\begin{coro*}(See Corollary \ref{CoroLift})\\
   Let $d \geq 1$ be an integer, and let $\rho_1: \Gamma \to \GL_d(\F_p)$ be a mod $p$ representation. 
    In the arithmetic case, assume that $F$ contains a primitive $p^2$-th root of unity. Then $\rho_1$ lifts to a representation $\rho_2: \Gamma \to \GL_d(\Z/p^2).$
\end{coro*}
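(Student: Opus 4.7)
The plan is to reduce the general case to the strictly upper triangular case already settled by the first corollary above, via a standard Sylow plus restriction-corestriction argument on the obstruction class.

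First I set up the obstruction. The obstruction to lifting $\rho_1$ to a continuous homomorphism $\rho_2\colon\Gamma\to\GL_d(\Z/p^2)$ is a cohomology class $\omega\in H^2(\Gamma,\mathrm{ad}(\rho_1))$, where $\mathrm{ad}(\rho_1)=\mathfrak{gl}_d(\F_p)$ is endowed with the $\Gamma$-action given by $g\cdot X=\rho_1(g)X\rho_1(g)^{-1}$. It suffices to prove that $\omega=0$.

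Next I reduce to a pro-$p$ preimage. Let $H:=\rho_1(\Gamma)\subset\GL_d(\F_p)$, a finite group, choose a Sylow $p$-subgroup $P\subset H$, and set $\Gamma_P:=\rho_1^{-1}(P)$. Then $\Gamma_P$ is open in $\Gamma$ with index $[H:P]$ coprime to $p$. Since $\mathrm{ad}(\rho_1)$ is $p$-torsion, the composition $\mathrm{Cor}\circ\mathrm{Res}$ equals multiplication by $[\Gamma:\Gamma_P]$ on $H^2(\Gamma,\mathrm{ad}(\rho_1))$, hence is an isomorphism. Therefore the restriction map
\[
\mathrm{Res}\colon H^2(\Gamma,\mathrm{ad}(\rho_1))\longrightarrow H^2(\Gamma_P,\mathrm{ad}(\rho_1))
\]
is injective, so it is enough to check that $\rho_1|_{\Gamma_P}$ lifts to $\GL_d(\Z/p^2)$.

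Now I reduce to the unipotent case. Since $\U_d(\F_p)$ is a Sylow $p$-subgroup of $\GL_d(\F_p)$ and $P$ is a $p$-subgroup, Sylow's theorem provides $g\in\GL_d(\F_p)$ with $gPg^{-1}\subset\U_d(\F_p)$; after conjugation we may assume that $\rho_1|_{\Gamma_P}$ factors through $\U_d(\F_p)$ (this does not affect the liftability question, since lifting $g$ to $\GL_d(\Z/p^2)$ allows one to transport a lift of the conjugate back to a lift of $\rho_1|_{\Gamma_P}$). I then apply the first corollary above, with $r=1$, to $\Gamma_P$. This is legitimate because the hypotheses pass to $\Gamma_P$: in the arithmetic case, $\Gamma_P$ is the absolute Galois group of a finite extension $F'/F$, and $F'$ still contains $\mu_{p^2}$ since $F$ does; in the topological case, $\Gamma_P$ is the fundamental group of a finite connected cover of the original surface, which is itself a closed connected orientable surface. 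The corollary then furnishes a strictly upper triangular lift of $\rho_1|_{\Gamma_P}$ to $\U_d(\Z/p^2)\subset\GL_d(\Z/p^2)$.

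The argument is essentially formal once restriction-corestriction and the Sylow reduction are in place; the one point requiring genuine attention is the stability of the hypotheses (being $p$-manageable and satisfying W90, in the abstract framework of the paper) under passage to open subgroups of index prime to $p$. For the two explicit families of $\Gamma$ considered here this is immediate, as noted above, so the main obstacle reduces to bookkeeping rather than to a substantive difficulty.
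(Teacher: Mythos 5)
Your proposal is correct and follows essentially the same route as the paper: reduce to the preimage $\Gamma_S=\rho_1^{-1}(P)$ of a Sylow $p$-subgroup of $\rho_1(\Gamma)$, conjugate into $\U_d(\F_p)$, lift there via the strictly upper triangular case (Theorem \ref{ThmLiftK} with $r=1$), and descend via the obstruction class and restriction--corestriction. The only cosmetic difference is that the paper outsources the obstruction-theoretic bookkeeping to \cite{DCF0}, Lemmas 3.2 and 3.4, and in the topological case first replaces $\Gamma$ by its profinite completion before applying Theorem \ref{ThmLiftK}, whereas you invoke the unipotent corollary for the discrete surface group $\Gamma_P$ directly and spell out the $\mathrm{Cor}\circ\mathrm{Res}$ argument yourself.
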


\begin{rem}
This Corollary actually holds without assumption on roots of unity, by   \cite{BIP} or \cite{EG}. In our approach, it is not clear to us how to get rid of it.
\end{rem}
\begin{rem*}
    If $F$ is a field, and $k \geq 1$, let $F(\mu_k)$ denote the extension of $F$ generated by $k$-th roots of unity. 
  In the previous theorem and its corollaries,  one can replace the condition about roots of unity (in the arithmetic case) by the slightly more general assumption that $F(\mu_p) = F(\mu_{p^{r+1}})$, where $r=1$ in the last corollary (see Remark \ref{rem kummer general}).
\end{rem*}

In addition, we prove strong results for $\rho_r$ admitting a unique complete $\Gamma$-invariant flag, with no assumption on roots of unity in the arithmetic case. See Proposition \ref{propploye}, where we call such a unique flag a \emph{wound Kummer flag}.\\
In all of our results, $\Z/p^r$ can be replaced with the ring of Witt vectors $\W_r(k)$ of a  field $k$ of characteristic $p$ (see Section \ref{SectGen}). For simplicity, we  stick to $k=\Z/p$. \\We remark in particular that \textit{we are able to control the coefficients of the lifts,} which is not the case in previous arithmetic results that we recall below. 

On the topological side, as far as we know, our results are entirely new. In the course of an email exchange, Hélène Esnault observed that the case of an absolutely irreducible $\rho_1$, in Item (2), is simple to handle. Indeed, the corresponding $\F_p$-point of the character variety of $\Gamma$ is then smooth. As such, it lifts to a $\Z_p$-point, producing a  lift of $\rho_1$ to $\rho_\infty: \Gamma \to \GL_d(\Z_p) .$ This fact illustrates  that,  with regards to liftability, the (opposite) case of unipotent representations is the most delicate.
\subsection*{Relating our results to other works, on the arithmetic side}\hfill\\
\noindent In this number-theoretic context,  liftability of a mod $p$ representation, to coefficients in $\Z_p$ or a DVR containing it, has  been widely investigated. One typically requires such a lift  to possess certain $p$-adic Hodge theoretic properties (i.e. crystalline, semistable, de Rham) so that one can (conjecturally) attach to it an automorphic representation of a suitable reductive group. We refer to the introduction of \cite{KL} for a brief summary of this line of work, prior to the very recent works of Emerton-Gee \cite{EG} and B\"ockle-Iyengar-Pa\v{s}k\={u}nas \cite{BIP}. To conclude, we  briefly  relate our work to theirs, as well as to earlier works of B\"ockle, Clozel-Harris-Taylor, and Khare-Larsen. \\

Emerton and Gee \cite{EG} rely on advanced techniques in arithmetic geometry to prove that every continuous representation $\ovl\rho\colon \Gamma_F\to\GL_d(\F_p)$, $F$ a $p$-adic field, admits a crystalline lift $\rho\colon \Gamma_F\to\GL_d(\mathcal O)$, $\mathcal O$ the valuation ring of a $p$-adic field. Their proof goes via a study of the geometry of the stack of $(\varphi,\Gamma)$-modules that they construct. 
Specializing $\rho$ modulo $p^r$, $r\ge 1$, gives a mod $p^r$ lift of $\rho_1$. 
When $\rho_1$ is triangular (i.e. admits a complete flag of sub-representations), the Emerton-Gee lift $\rho$ is also triangular, since it is constructed via recursive liftings of the Jordan-H\"older factors of $\rho_1$ and of their extensions \cite[Theorem 6.4.4]{EG}. In particular, they produce triangular mod $p^r$ lifts of $\rho_1$, as we do in the case when $F$ contains the $p^r$-th roots of unity. However, their lifting result does not imply ours: in Theorem \ref{ThmLiftK}, for $r \geq 2$, one starts with a mod $p^r$ representation $\rho_r$, equipped with a Kummer flag, and one constructs a mod $p^{r+1}$ lift, also equipped with a Kummer flag. Emerton and Gee's method, on the other hand, does not directly produce a lift of $\rho_r$: one first needs to reduce $\rho_r$  mod $p$, and then lift (mod $p^{r+1}$) the resulting $\rho_1$ to a $\tilde \rho_{r+1}$ with $\mathcal O$-coefficients. Its mod $p^r$ reduction is in general  not  isomorphic to $\rho_r$. Also, it is not guaranteed that a given  sub-extension of  $\tilde \rho_{r+1}$ is trivial, as soon as the corresponding sub-extension of $\rho_1$ is. \\

B\"ockle, Iyengar and Pa\v{s}k\={u}nas \cite{BIP} also prove the existence of  lifts of an arbitrary $\rho_1$  to $\mathcal O$-coefficients as above (hence to $\mathcal O/p^r$), via deformation theory. However, they do not mention that if $\rho_1$ is triangular then one can always find a triangular $\Z/p^r$-lift in its deformation space.\\ 

For an arbitrary local field $F$, and for global $F$ in some cases when the corresponding deformation problem is unobstructed, B\"ockle \cite{Bo} is able to lift continuous $\Z/p$-linear representations of $\Gamma_F$ to $\Z/p^2$, using a description of the absolute Galois group via generators and relations. Therefore,  Corollary \ref{CoroLift}(2)  follows from his work.  As far as explicitly shown, B\"ockle's method produces lifts to $\Z_p$ (hence to $\Z/p^r$, $r>2$) only in  special cases -- see \cite[Theorem 1.3, Proposition 1.4]{Bo}. \\

Suppose that $F$ is a local field of characteristic 0 and residual characteristic $\ell\neq p$. Then Clozel, Harris and Taylor \cite[Section 2.4.4]{CHT} prove that every $\rho_1$ admitting a complete flag lifts to a $\Z_p$-representation also admitting a complete flag, with  no assumption on roots of unity. \\

Finally, Khare and Larsen \cite[Theorem 5.4]{KL} prove the following special case  of our Corollary \ref{CoroLift}: $d=3$, $r=2$, and the diagonal is trivial mod $p$. Then, by an approximation process, they obtain the same lifting result for  global fields. We did not try to adapt our results to the global arithmetic case. Hopefully this will be done in future works.

\setcounter{tocdepth}{1}
\tableofcontents

    \section{Notation and conventions}
Let $\Gamma$ be a profinite group and $p$ be a prime number. For an integer $d\geq 1$, denote by $\B_d \subset \GL_d$ the subgroup formed by upper triangular matrices, and by $\U_d\subset\B_d$ the subgroup of $\B_d$ consisting of  unipotent matrices. \\
Let $r\geq 1$ be an integer. A $(\Gamma,r)$-module is a finite $(\Z/p^r)$-module $M$, equipped with a continuous $\Gamma$-action. Here ``continuous'' just means that $\Gamma $ acts on $M$ through a finite quotient $\Gamma/\Gamma_0$, where $\Gamma_0 \subset \Gamma$ is an open normal subgroup. Set $$M^\vee:=\Hom_{\Z/p^r}(M,\Z/p^r),$$ viewed as $(\Gamma,r)$-module in the natural way.  If $M$ is moreover free of rank $d$ as a $(\Z/p^r)$-module, we say that $M$ is a $(\Gamma,r)$-bundle of rank $d$. Then, $M^\vee$ is a $(\Gamma,r)$-bundle of rank $d$, as well.

 Let $M_r$ be  a $(\Gamma,r)$-bundle. For $1 \leq s < r $, there is  the natural \textit{reduction} exact sequence of  $(\Gamma,r)$-modules   \[0 \to M_{r-s} \stackrel {i} \to  M_r \stackrel q \to M_s \to 0. \]  Here $i$ denotes the injection arising from $ M_r \xrightarrow{p^s \Id } M_r$ upon modding out $p^{r-s} M_r$, and $q$ stands for the natural quotient (reduction).\\
\subsection{Flags} A complete $(\Gamma,r)$-flag $\nabla_d = (V_i)_{1 \leq i \leq d}$ of rank $d$ is the data of a $(\Gamma,r)$-bundle $V_d$, of rank $d$, together with a complete filtration by sub-$(\Gamma,r)$-bundles $$0=V_0 \subset V_1 \subset \dots \subset V_{d-1} \subset V_d,$$  with $\mathrm{rank}(V_i)=i$ for all $i$. Thus, for all $i$, $L_i := V_i / V_{i-1}$ is  a $(\Gamma,r)$-bundle of rank one. Up to isomorphism, it is given by  a character $\chi_i : \Gamma \to(\Z/p^r)^\times$. For $i \leq j,$ set $$ V_{j/i}:=V_j/V_i.$$

In particular, a complete $(\Gamma,r)$ flag defines,  for all $1 \leq i \leq d$,   a character $$\chi_i : \Gamma \to \GL(L_i) = (\Z/p^r)^\times.$$ 

We denote by $\End(\nabla_d)$ the $(\Gamma,r)$-module of endomorphisms of $\nabla_d$. It is the  sub-ring of $\End(V_d)$ that preserves the given complete filtration. Write $\Aut(\nabla_d)$ for $\End(\nabla_d)^\times$, the group of invertible elements in the ring $\End(\nabla_d)$.

For $0 \leq i<j<k \leq d$, there are extensions of $(\Gamma,r)$-modules 
\[\mathcal E_{k/j, j/i}: 0 \to V_{j/i} \xrightarrow{\iota} V_{k/i} \xrightarrow{\pi} V_{k/j} \to 0 \, ,\] naturally attached to $\nabla_d$.

Denote by $$\nabla_{d-1}: 0=V_0 \subset V_1 \subset \dots \subset V_{d-1}, $$    resp. by $$\nabla_{d/1}: 0=V_0 \subset V_{2/1} \subset V_{3/1} \subset \dots \subset V_{d/1}, $$  the truncation, resp. the quotient, of $\nabla_d$. These are complete $(d-1)$-dimensional $(\Gamma,r)$-flags. \\
A complete $(\Gamma,r)$-flag $\nabla_d$ can be seen as a triangular representation of $\Gamma $ over $(\Z/p^r)$: given a basis of $V_d$ compatible with $\nabla_d$, one gets a representation
\[\rho : \Gamma \to \B_d(\Z/p^r) \subset \GL_d(\Z/p^r). \]
Note that $\rho$ factors through $\U_d(\Z/p^r) \subset \B_d(\Z/p^r)$, if and only if the $\Gamma $-action on every $L_i$  is trivial, or equivalently if every $\chi_i=1$.

From now on, the notation $V_{i,r}$ stands for a $(\Gamma,r)$-bundle of rank $i$, and accordingly, $\nabla_{d,r}$ denotes a complete $d$-dimensional flag of $(\Gamma,r)$-bundles. If $V_{i,r}$ is given, then for all $1 \leq s \leq r$,  we denote its reduction  $V_{i,r} \otimes_\Z \Z/p^s$ by $V_{i,s}$. It is  a $(\Gamma,s)$-bundle. The  similar convention for complete flags   $\nabla_{d,r}$ is adopted.\\ 

\subsection{Teichm\"uller lift}
Let $L$ be an invertible $(\Gamma,1)$-bundle, given by a character $$ \chi_L: \Gamma \to \F_p^\times .$$
 Consider the multiplicative (Teichm\"uller) section $$\tau: \F_p^\times \to (\Z/p^r)^\times ,$$ identifying $\F_p^\times$ with the group of $(p-1)$-th roots of unity in the ring $\Z/p^r$.\\ Postcomposing  by $\tau$ yields a character $$ \tau(\chi_L): \Gamma \to (\Z/p^r)^\times  ,$$ providing a natural $(\Gamma,r)$-bundle of rank one,  denoted by $\W_r(L)$. It is the $r$-th Teichm\"uller lift of $L$.

\section{Galois cohomology and $p$-manageable groups}
Let $p$ be a prime.
In this section, we recall the notion  of  a (non finitely-generated) Demu$\check{\textup{s}}$kin pro-$p$-group. We then introduce the wH90 property, and $p$-manageable profinite groups. We state in particular Lemma \ref{LemManaLift}, the key tool for proving our lifting theorems in the next sections. These apply to all $p$-manageable profinite groups. In Proposition  \ref{Prop2G}, we provide two important examples of such groups: the absolute Galois group of a local field, and the profinite completion of the fundamental group of a closed connected orientable surface.
\begin{defi}
    Let $$ (.,.): V \times W \to \F_p$$ be a bilinear pairing of $\F_p$-vector spaces. Consider it as a linear map $$ L: V \to W    ^\vee,$$ $$v \mapsto (w \mapsto (v,w)).$$ The left kernel of $ (.,.)$  is the subspace $\Ker(L) \subset V$.
\end{defi}

\begin{defi}\label{Defidemu}

Let $\Gamma_p$ be a pro-$p$-group. Let us say that $\Gamma_p$ is a Demu$\check{\textup{s}}$kin pro-$p$-group, if the following conditions are satisfied. \begin{enumerate}

     \item {The $\F_p$-vector space $H^2(\Gamma_p,\F_p)$ is one-dimensional.}  \item {The cup-product  pairing  of possibly infinite-dimensional $\F_p$-vector spaces \[H^1(\Gamma_p,\F_p) \times H^1(\Gamma_p,\F_p) \to H^2(\Gamma_p,\F_p) \] has trivial (left) kernel.}

\end{enumerate}

More generally, if $\Gamma$ is a profinite group, one says that $\Gamma$ is $p$-Demu$\check{s}$kin, if it admits a Demu$\check{s}$kin pro-$p$-Sylow subgroup $\Gamma_p \subset \Gamma$.
\end{defi}

\begin{rem}
 In the classical literature, Demu$\check{\textup{s}}$kin pro-$p$-groups $\Gamma_p$ are  required to be topologically finitely generated -- a condition that is equivalent to the finiteness of $H^1(\Gamma_p,\F_p)$. However, this assumption is superfluous to prove the main lifting results of this paper. For a nice investigation of (not necessarily topologically finitely generated) Demu$\check{\textup{s}}$kin pro-$p$-groups, we refer to the recent work \cite{BN}.
\end{rem}

\begin{rem}
    If  $\Gamma$ is the absolute Galois group of a ``reasonable'' field $F$, condition (2) is very mild -- see Lemma \ref{Lem2FT}. By contrast, condition (1) is extremely strong. It does not typically hold for an $F$ which is finitely generated over its prime subfield. For instance, if $\mu_p \subset F$, then $H^2(\Gamma,\Z/p)$ is isomorphic to the $p$-torsion in the Brauer group of $F$.
\end{rem}
\begin{rem}

For $p=2$, the group $\Gamma:=\Z/2\Z$ is easily checked to be Demu$\check{\textup{s}}$kin. This fails for  $p \geq 3$: the group $\Gamma:=\Z/p\Z$ satisfies condition (1), but not (2). Indeed, the cup-product pairing then identically vanishes. These assertions are straightforward, from the usual computation of the cohomology of cyclic groups.
    
\end{rem}

\begin{lem}\label{Lem2FT}
   Let $F$  be an infinite field  of characteristic  $\neq p$, finitely generated over its prime subfield. Set $\Gamma=\Gal(F^{sep}/F)$. Then, the cup-product pairing $$H^1(\Gamma,\F_p) \times H^1(\Gamma,\F_p) \to H^2(\Gamma,\F_p) $$  has trivial (left) kernel.
\end{lem}

\begin{dem}

  By a limit argument, using restriction/corestriction  for finite extensions of degree prime-to-$p$, one can assume that $\F_p \simeq \mu_p \subset F^\times$, and reduce the question to proving that the cup-product $$H^1(F,\mu_p) \times  H^1(F,\mu_p)  \to \Br(F)$$ has trivial kernel. [This reduction applies to any field $F$.] Pick a non-zero element in $H^1(F,\mu_p)$, corresponding via Kummer theory to $(x) \in F^ \times/(F^ \times)^ p $, where $x$ is not a $p$-th power. We need to find some $(y) \in H^1(F,\mu_p) $ such that $$(x) \cup (y) \neq 0 \in H^2(F,\mu_p).$$ 
Assume that $F$ is a global field, and denote by $V$ the set of all its places. One knows that the group \[\Sha^1(F,\mu_p):= \Ker(H^1(F,\mu_p) \to \prod_{v \in V} H^1(F_v,\mu_p) ) \] vanishes. Let $v \in V$ be such that $x$ is unramified at $v$, and $(x)_v \neq 0 \in H^1(F_v,\mu_p)$. By local class field theory, there exists $y_v \in H^1(F_v,\mu_p)$, such that $$(x)_v \cup (y_v)  \neq 0 \in \Br(F_v).$$ Since the restriction map $H^1(F,\mu_p) \to H^1(F_v,\mu_p)$ is surjective, one may assume that $y_v=y \in F^ \times$. Then indeed,  one has $(x) \cup (y)  \neq 0 \in \Br(F).$ It remains to deal with an $F$ which is not a global field. In that case, denote by $\F \subset F$  the prime subfield. Denote by $d$ the transcendence degree of $F$ over $\F$. If $\F=\F_l$, then $d \geq 2$.  If $\F=\Q$, then $d \geq 1$.  Replacing $x$ by $x t^p$ for some $t \in F^\times$, one may assume that $x$ is transcendental over $\F$. Pick a transcendence basis $x=x_1,x_2,\ldots, x_d$ of $F$ over $\F$. Set $E:=\F(x_2, \ldots, x_d)$.  Denote by $F' \subset F$ the separable closure of $E(x)$ in $F$. The extension $F/F'$ is finite and purely inseparable, hence of degree prime-to-$p.$ Moreover, $x \in F'$. For our purpose, using restriction/corestriction, we may replace $F$ by $F'$, thus reducing to the case $F/E(x)$ separable. Replacing $E$ by its algebraic closure in $F$, one may further assume that $E$ is algebraically closed in $F$. Thus, $F$ is the function field $E(C)$ of a geometrically connected proper  smooth $E$-curve $C$. The extension $F/E(x)$ corresponds to a finite separable $E$-morphism  $C \to \P^1$. Define $ K:=E(x)[T]/(T^p-x)$. The assumption $x \notin (F^ \times)^ p$ translates as: $L:=K \otimes_{E(x)} F$ is a field. It is a finite separable extension of $E(x)$. Since $E$ is an infinite field, finitely generated over its prime subfield, it is Hilbertian. Hence, there exist infinitely many $e \in E$, such that the specialisation of $L/E(x)$ at $x \mapsto e$ is a field extension $L_e /E$. In particular, there is a unique closed point  of $C$ lying above such an $e$.  This way, one produces infinitely many closed points $c \in C$, such that $v_c(x)=0$, and $x(c) \in E(c)^\times$  is not a $p$-th power, where $E(c)/E$ denotes the residue field of $c$. Let $\pi_c \in F^\times$ be a uniformizer at $c$ (in other terms, $v_c(\pi_c)=1$). Then, 
 $$(x) \cup (\pi_c) \neq 0 \in H^2(F,\mu_p),$$ e.g. because its residue at $c$ is non-vanishing, namely $$\Res_c ((x) \cup (\pi_c))=x(c) \neq 0  \in H^1(E(c),\mu_p).$$ 
 This finishes the proof.
\end{dem}

\begin{rem}
    As pointed out by the referee, using similar arguments, Lemma \ref{Lem2FT} also holds for function fields of varieties of dimension at least $2$ over arbitrary fields.
\end{rem}

The following lemma is an analogue of \cite{Se}, I.4.5, Proposition 30:
\begin{lem}\label{LemmaTrivLK}
    Let $\Gamma$ be a Demu$\check{s}$kin pro-$p$-group. Let $V$ be a $(\Gamma,1)$-bundle. Then, the cup-product pairing  $$H^1(\Gamma,V )\times H^1(\Gamma,V^\vee) \to H^2(\Gamma,\F_p)\simeq \F_p $$ has trivial (left) kernel.
\end{lem}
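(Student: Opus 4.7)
\begin{dem}
The plan is to induct on $d := \dim_{\F_p} V$. The base case $d = 1$ is exactly condition (2), since $\Gamma$ being pro-$p$ forces any one-dimensional $(\Gamma, 1)$-bundle to be trivial, so $V = V^\vee = \F_p$. If $V = V_1 \oplus V_2$ splits with both summands non-zero, the pairing decomposes as an orthogonal direct sum and the induction hypothesis applied to each $V_i$ (of strictly smaller dimension) closes the case. So the work lies with $V$ indecomposable of dimension $d \geq 2$.

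For such $V$, the key preliminary step is to set up a ``two-sided dévissage'': I would produce a trivial line $L = \F_p \cdot v \subseteq V$ and a non-zero $\Gamma$-equivariant quotient $q \colon V \twoheadrightarrow \F_p$ with $L \subseteq U := \ker(q)$. Both $V^\Gamma$ and $(V^\vee)^\Gamma$ are non-zero by standard pro-$p$ orbit counting, and a short linear-algebra argument shows that if the evaluation $V^\Gamma \otimes (V^\vee)^\Gamma \to \F_p$ were a perfect duality, a dual basis would exhibit a $\Gamma$-equivariant splitting $V = V^\Gamma \oplus \bigcap_i \ker(q_i)$, contradicting indecomposability. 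Hence some non-zero $v, q$ satisfy $q(v) = 0$.

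Now let $\alpha \in H^1(\Gamma, V)$ lie in the left kernel. Functoriality of the cup product applied to $q$ yields $q_*(\alpha) \cup \delta = \alpha \cup q^\vee_*(\delta) = 0$ for every $\delta \in H^1(\Gamma, \F_p)$, so condition (2) forces $q_*(\alpha) = 0$, and exactness gives $\alpha = j_*(\beta)$ for some $\beta \in H^1(\Gamma, U)$, with $j \colon U \hookrightarrow V$. Functoriality applied to $j$ then shows $\beta$ pairs trivially against $\mathrm{im}(j^\vee_*) = \ker(\partial^\vee)$, where $\partial^\vee \colon H^1(\Gamma, U^\vee) \to H^2(\Gamma, \F_p)$ is the connecting map of $0 \to \F_p \to V^\vee \to U^\vee \to 0$; by the standard identification of connecting maps with cup product, $\partial^\vee(\mu) = \eta \cup \mu$, where $\eta \in H^1(\Gamma, U)$ is the extension class of $0 \to U \to V \to \F_p \to 0$.

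The endgame invokes the induction hypothesis on $U$ (of dimension $d - 1$). Indecomposability forces this extension to be non-split, so $\eta \neq 0$; the induction says $\Phi_U \colon H^1(\Gamma, U) \to H^1(\Gamma, U^\vee)^*$, $u \mapsto u \cup (-)$, is injective, whence $\partial^\vee = \Phi_U(\eta)$ is a non-zero functional and $\ker(\partial^\vee)$ has codimension one in $H^1(\Gamma, U^\vee)$. Since $\Phi_U(\beta)$ annihilates this hyperplane, it must be a scalar multiple of $\Phi_U(\eta)$, and injectivity gives $\beta \in \F_p \cdot \eta$; but $\eta$ is a coboundary in the long exact sequence of $0 \to U \to V \to \F_p \to 0$, so $j_*(\eta) = 0$, whence $\alpha = j_*(\beta) = 0$. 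The main obstacle I foresee is that a naive one-sided dévissage through $0 \to \F_p \to V \to V' \to 0$ alone leaves an obstruction in $H^2(\Gamma, (V')^\vee)$, a group over which condition (2) provides no leverage; the remedy is precisely the two-sided nesting above, where the trivial-quotient side is handled by condition (2) and the trivial-sub side pins $\beta$ onto a line on which $j_*$ vanishes.
\end{dem}
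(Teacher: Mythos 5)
Your proof is correct and follows essentially the same dévissage as the paper: choose a codimension-one sub $U\subset V$ with trivial quotient, use condition (2) to kill the image of $\alpha$ in $H^1(\Gamma,\F_p)$, and then use the induction hypothesis together with the hyperplane $\ker(\partial^\vee)$ (cup with the extension class $\eta$) to pin $\beta$ onto $\F_p\cdot\eta\subset\ker(j_*)$. One remark: the preliminary construction of a trivial line $L=\F_p\cdot v$ inside $U$ is never used in the ensuing argument — only the trivial quotient $q$ matters — and the reduction to the indecomposable case is also unnecessary, since the paper simply treats the split case ($\eta=0$, i.e.\ $V\cong U\oplus\F_p$) directly within the same induction; so your ``two-sided nesting'' adds no leverage beyond what one side already gives.
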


\begin{dem}

Since $\Gamma=\Gamma_p$ is pro-$p$,  there exists an extension of  $(\Gamma,1)$-bundles $$(E): 0 \to W \xrightarrow{i} V \xrightarrow{\pi} \F_p\to 0.$$ We may then prove the result by induction on the dimension.
    The case $\dim(V)=1$ follows from Definition \ref{Defidemu}.  Suppose the Lemma holds for $W$.  Denote the  cohomology class  of $(E)$ by $$ e \in H^1(\Gamma, W ).$$ If $e=0$, then $V=W \bigoplus \F_p$, and the statement readily follows from the induction hypothesis. Henceforth, assume $e\neq 0$. Dualising  $(E)$, one gets the extension of  $(\Gamma,1)$-bundles $$(E^\vee) : 0 \to \F_p \to V^\vee \xrightarrow{q} W^\vee  \to 0.$$

  By induction, one knows that the pairing $$H^1(\Gamma,W )\times H^1(\Gamma,W^\vee) \to H^2(\Gamma,\F_p)\simeq \F_p $$ has trivial left kernel.  We shall thus tacitly  identify  $H^1(\Gamma,W )$ to an $\F_p$-subspace of $H^1(\Gamma,W^\vee)^\vee$. With respect to this pairing, the orthogonal of the one-dimensional space $$\F_p\cdot e \subset H^1(\Gamma,W ) $$ is  the hyperplane
    $$ \Ker(H^1(\Gamma,  W^\vee) \xrightarrow{\delta} H^2(\Gamma, \F_p ) \simeq \F_p), $$ where $\delta$ is the connecting map in cohomology arising from the extension $(E^\vee)$. Indeed, the kernel of the linear form $\delta$ is the image of the map $i^\vee_* : H^1(\Gamma, V^\vee) \to H^1(\Gamma, W^\vee)$, and for all $f \in H^1(\Gamma, V^\vee)$, we have (up to sign) $e \cup i^\vee_*(f) = i(e) \cup f = 0$ since $i(e)=0$. Hence $\ker(\delta)= e^\perp$.\\
    Consider a class $v \in H^1(\Gamma,V)$. Assume that $v \cup x=0 ,$  for all $ x \in  H^1(\Gamma,V^\vee)$. By naturality of the cup-product, and a little diagram chase left to the reader, this implies that $\pi_*(v) \in H^1(\Gamma,\F_p)$ is orthogonal to the whole of $ H^1(\Gamma,\F_p)$. By condition (2) of Definition \ref{Defidemu}, one gets $\pi_*(v)=0$, so that there exists $w \in H^1(\Gamma,W)$, with $i_*(w)=v$. Then, one checks as above that the class  $w$ is orthogonal to the image of$$q_*:H^1(\Gamma,V^\vee ) \to H^1(\Gamma,W^\vee),$$ which is the hyperplane $\Ker(\delta)$. Thus the kernel of the $\F_p$-linear form $(w,.)$ is contained in that of $(e,.)$. It follows that $(w,.)$ is a multiple of $(e,.)$, implying (by induction assumption) that $w$ is zero or collinear to $e$, hence $v=i_*(w)=0$. 
\end{dem}
\begin{lem}\label{LemManaLift}
    Let $\Gamma$ be a $p$-Demu$\check{s}$kin profinite group. Consider a non-split extension of  $(\Gamma,1)$-bundles $$(E): 0 \to L \xrightarrow{i} W \xrightarrow{\pi} V \to 0,$$  where $L$ is invertible. Denote its  cohomology class by $$ e \neq 0 \in \Ext^1_{(\Gamma,1)}(V,L)=H^1(\Gamma, V^\vee \otimes L ).$$ Then, the map  $$H^2(\Gamma,L ) \xrightarrow{i_*} H^2(\Gamma,W) $$ is zero. Equivalently, the connecting arrow  arising from $(E)$,$$H^1(\Gamma, V) \to H^2(\Gamma,L), $$ $$ v^1 \mapsto e \cup v^1$$
 is onto.
 \end{lem}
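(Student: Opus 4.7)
\emph{Proof proposal.} The equivalence of the two formulations in the statement is a direct consequence of the long exact sequence in cohomology attached to $(E)$: surjectivity of the connecting map $\delta \colon H^1(\Gamma,V)\to H^2(\Gamma,L)$, $v\mapsto e\cup v$, is equivalent to the vanishing of $i_*$. My plan is to reduce to the case where $\Gamma$ is pro-$p$, trivialize the invertible bundle $L$, and then read off surjectivity of $\delta$ directly from Lemma \ref{LemmaTrivLK}.

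First I would pick a $p$-manageable pro-$p$-Sylow $\Gamma_p \subset \Gamma$. Because $L$, $W$ and $V^\vee \otimes L$ are $p$-primary $(\Gamma,1)$-modules, the classical corestriction--restriction argument (applied at the level of the finite quotients of $\Gamma$ through which the relevant actions factor) shows that restriction to $\Gamma_p$ is injective on $H^1$ and $H^2$. In particular $e|_{\Gamma_p}$ is still non-zero, and by functoriality it suffices to prove the lemma with $\Gamma$ replaced by $\Gamma_p$. Next, since $\Aut_{\F_p}(L)=\F_p^\times$ has order $p-1$ and $\Gamma_p$ is pro-$p$, the $\Gamma_p$-action on $L$ must be trivial, so $L|_{\Gamma_p}\simeq\F_p$. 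Consequently $H^2(\Gamma_p,L)\simeq H^2(\Gamma_p,\F_p)\simeq\F_p$ is one-dimensional by the first axiom of $p$-manageability, and $e|_{\Gamma_p}$ becomes a non-zero class in $H^1(\Gamma_p, V^\vee)$.

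Under this identification, the connecting map $\delta \colon H^1(\Gamma_p,V)\to H^2(\Gamma_p,\F_p)$ is the composition
\[ H^1(\Gamma_p, V) \xrightarrow{e\,\cup\,-} H^2(\Gamma_p, V^\vee \otimes V) \longrightarrow H^2(\Gamma_p,\F_p), \]
where the second arrow is induced by the evaluation pairing $V^\vee\otimes V\to\F_p$. I would then invoke Lemma \ref{LemmaTrivLK} applied to the $(\Gamma_p,1)$-bundle $V^\vee$: the cup-product pairing $H^1(\Gamma_p,V^\vee)\times H^1(\Gamma_p,V)\to H^2(\Gamma_p,\F_p)\simeq\F_p$ has trivial left kernel, so $e|_{\Gamma_p}\neq 0$ produces some $v\in H^1(\Gamma_p,V)$ with $e\cup v\neq 0$. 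As the target is one-dimensional, $\delta$ is surjective, which concludes the argument.

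The only mildly subtle step is the reduction to the pro-$p$ case together with the attendant trivialization of $L$; once these are in place, Lemma \ref{LemmaTrivLK} supplies exactly the non-degeneracy required in the final step, which is the true heart of the statement.
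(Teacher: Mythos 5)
Your proof is correct and follows essentially the same route as the paper: reduce to a pro-$p$-Sylow subgroup via restriction–corestriction (with a limit argument in case $\Gamma_p$ is not open), observe that $L$ becomes trivial there so that $H^2(\Gamma_p,L)\simeq\F_p$ is one-dimensional, and conclude from Lemma \ref{LemmaTrivLK} applied to $V^\vee$ that $e\cup(-)$ is non-zero, hence onto. The only difference is that you spell out the cup-product factorization explicitly, which the paper leaves implicit.
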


\begin{dem}

Let us prove the first assertion, which is easily seen to be equivalent to the second one. By restriction/corestriction, and a limit  argument (with respect to a pro-$p$-Sylow subgroup  $\Gamma_p \subset \Gamma$), one sees that $\Res(E)$ is a non-split extension of $(\Gamma_p,1)$-bundles.  Likewise, the question is then reduced to the case where $\Gamma$ is a pro-$p$-group. Then $L \simeq \F_p$, and $H^2(\Gamma,\F_p)$ is one-dimensional. It thus suffices to prove that the connecting arrow $H^1(\Gamma, V) \to H^2(\Gamma,L)$ is non-vanishing. This follows from Lemma \ref{LemmaTrivLK} (applied to $V^\vee$).
\end{dem}

\begin{defi}(The wH90 property).\label{def wH90}\\
    Let $\Gamma$ be a profinite group. Let $\Gamma_p \subset \Gamma$ be a pro-$p$-Sylow. Let  $\Z_p(1)$ be a $\Z_p$-module of rank one, equipped with a $\Gamma$-action, occurring  via a continuous character $\Gamma \xrightarrow{\chi} \Z_p^\times$. Say that the pair $(\Gamma, \Z_p(1))$ satisfies wH90 (for ``weak formal Hilbert 90''), if the reduction map $$H^1(\Gamma_p, (\Z/p^r)(1)) \to H^1(\Gamma_p, \F_p(1))$$ is onto, for every $r \geq 2$.
\end{defi}
\begin{rem}
For $p=2$, take the cyclic group of order $2$, $\Gamma:=\Z/2\Z$. It acts on $\Z_2$ non-trivially, by the sign character. Define this $\Gamma$-module to be $\Z_2(1)$. It is then standard that $(\Gamma, \Z_2(1))$ satisfies wH90.
\end{rem}
\begin{rem}

    In \cite{DCF0}, the pair $(\Gamma, \Z_p(1))$ is said to be $(1,\infty)$-cyclotomic, if surjectivity above holds not only for $\Gamma_p$, but also for every open subgroup $\Gamma' \subset \Gamma$. However, to prove the lifting theorems we have in mind, it suffices here to require surjectivity for $\Gamma_p$. Observe that this is equivalent to surjectivity of  $H^1(\Gamma', (\Z/p^r)(1)) \to H^1(\Gamma', \F_p(1))$, for every open subgroup $\Gamma' \subset \Gamma$, of prime-to-$p$ index.
    \end{rem}

\begin{defi}($p$-manageable profinite group).\label{defpmana}\\
    Let $\Gamma$ be a profinite group.  Let  $\Z_p(1)$ be a $\Z_p$-module of rank one, equipped with a $\Gamma$-action. Say that $\Gamma$ is $p$-manageable with respect to $\Z_p(1)$, if $\Gamma$ is $p$-Demu$\check{s}$kin, and the pair $(\Gamma, \Z_p(1))$ satisfies wH90.
\end{defi}
We end this section with a lemma:
\begin{lem} \label{lem Kummer surj}
Let $(\Gamma, \Z_p(1))$ be a pair satisfying (wH90) and $L$ be a $\Gamma$-module of order $p$. Then the natural morphism $H^1(\Gamma, \W_{r+1}(L)(1)) \to H^1(\Gamma, \W_r(L)(1))$ is surjective.
\end{lem}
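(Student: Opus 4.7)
The plan is to reduce the statement to a pro-$p$ computation where the coefficient character trivializes, and then to assemble the desired step-wise liftability by chaining the single-step lifting property encoded in W90.

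First I would pass to a pro-$p$-Sylow $\Gamma_p \subset \Gamma$. Since $[\Gamma : \Gamma_p]$ is prime to $p$ and all the cohomology groups involved are $p$-primary torsion, $[\Gamma : \Gamma_p]$ is a unit on them. Given a class $c \in H^1(\Gamma, \W_r(L)(1))$, one lifts $\mathrm{res}(c)$ to some $\tilde c \in H^1(\Gamma_p, \W_{r+1}(L)(1))$ using the pro-$p$ case, and then $[\Gamma:\Gamma_p]^{-1}\mathrm{cor}(\tilde c) \in H^1(\Gamma, \W_{r+1}(L)(1))$ reduces back to $c$ by virtue of $\mathrm{cor}\circ\mathrm{res}=[\Gamma:\Gamma_p]$. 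Since $\F_p^{\times}$ has order $p-1$ prime to $p$, any continuous character $\Gamma_p \to \F_p^{\times}$ is trivial, so $\chi_L|_{\Gamma_p}$ is trivial, and the Teichm\"uller lift $\W_r(L)|_{\Gamma_p}$ is the trivial module $\Z/p^r$. What remains is to show, on the pro-$p$ group $\Gamma_p$, that the natural map $H^1(\Gamma_p, (\Z/p^{r+1})(1)) \to H^1(\Gamma_p, (\Z/p^r)(1))$ is surjective for every $r \geq 1$.

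The key observation is the following injectivity claim: for every $s \geq 1$, the map $p_{\ast} : H^2(\Gamma_p, (\Z/p^s)(1)) \to H^2(\Gamma_p, (\Z/p^{s+1})(1))$ induced by multiplication by $p$ on coefficients is injective. Indeed, the long exact sequence attached to
$$0 \to (\Z/p^s)(1) \xrightarrow{p} (\Z/p^{s+1})(1) \to \F_p(1) \to 0$$
reads, in the relevant range,
$$H^1(\Gamma_p, (\Z/p^{s+1})(1)) \to H^1(\Gamma_p, \F_p(1)) \xrightarrow{\delta_s} H^2(\Gamma_p, (\Z/p^s)(1)) \xrightarrow{p_{\ast}} H^2(\Gamma_p, (\Z/p^{s+1})(1)),$$
and the first arrow is surjective by W90 (applied with $r=s+1 \geq 2$), so $\delta_s = 0$ and $p_{\ast}$ is injective. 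Chaining these injections for $s = 1, \ldots, r$ produces an injection $(p^r)_{\ast} : H^2(\Gamma_p, \F_p(1)) \hookrightarrow H^2(\Gamma_p, (\Z/p^{r+1})(1))$, since the corresponding composite of coefficient inclusions is precisely multiplication by $p^r$.

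Finally I would conclude by reading the long exact sequence of
$$0 \to \F_p(1) \xrightarrow{p^r} (\Z/p^{r+1})(1) \to (\Z/p^r)(1) \to 0,$$
namely
$$H^1(\Gamma_p, (\Z/p^{r+1})(1)) \to H^1(\Gamma_p, (\Z/p^r)(1)) \xrightarrow{\partial} H^2(\Gamma_p, \F_p(1)) \xrightarrow{(p^r)_{\ast}} H^2(\Gamma_p, (\Z/p^{r+1})(1)).$$
The injectivity of $(p^r)_{\ast}$ just established forces $\partial = 0$, which is exactly the desired surjectivity. The only real obstacle here is organizational: once one observes that W90 is precisely what makes each elementary Bockstein $\delta_s$ vanish, the full statement assembles by a standard d\'evissage.
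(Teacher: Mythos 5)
Your proof is correct and follows essentially the same route as the paper's: reduce to a pro-$p$-Sylow subgroup where $L$ trivializes, deduce the surjectivity of $H^1(\Gamma_p, \Z/p^{r+1}(1)) \to H^1(\Gamma_p, \Z/p^r(1))$ from W90 by an iteration over $r$, and then transfer back to $\Gamma$ by restriction-corestriction. The paper's phrase ``by induction on $r$ and reduction to the (W90) property'' is terse; your way of making it precise, namely showing that each coefficient multiplication $p_*\colon H^2(\Gamma_p,(\Z/p^s)(1))\to H^2(\Gamma_p,(\Z/p^{s+1})(1))$ is injective because the Bockstein $\delta_s$ vanishes by W90, and then chaining these to kill the connecting map of $0\to\F_p(1)\xrightarrow{p^r}(\Z/p^{r+1})(1)\to(\Z/p^r)(1)\to 0$, is an entirely equivalent packaging of the same d\'evissage and is perfectly sound.
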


\begin{proof}
There exists an isomorphism $L \simeq \Z / p\Z$ as $\Gamma_p$-modules. The morphism $H^1(\Gamma_p, \W_{r+1}(L)(1)) \to H^1(\Gamma_p, \W_r(L)(1))$ identifies to $H^1(\Gamma_p, \Z / p^{r+1} (1)) \to H^1(\Gamma_p, \Z / p^r(1))$. This last map is surjective by induction on $r$ and reduction to the (wH90) property. Therefore, by restriction-corestriction, the cokernel of the morphism $H^1(\Gamma, \W_{r+1}(L)(1)) \to H^1(\Gamma, \W_r(L)(1))$ is both $p$-torsion and prime-to-$p$-torsion, hence it is trivial.
\end{proof}

\section{Common properties shared by (seemingly) unrelated groups}
We begin with gathering classical material about topological fundamental groups of surfaces. For convenience, short proofs are included.
\begin{prop}\label{PropTopo}
 Let $\Gamma$ be the topological fundamental group of a closed connected orientable surface $S_g$, of genus $g \geq 1$. Let $M$ be a finite abelian group, equipped with an action of $\Gamma$. The following holds. \begin{enumerate}
 \item{The abelianisation $\Gamma_{ab}$ is isomorphic to $\Z^{2g}$.}
     \item{If the $\Gamma$-action on $M$ is trivial,  there are natural isomorphisms $$H^i(\Gamma, M) \stackrel \sim \to H^i_{singular}(S,M)$$ and  $$H^2(\Gamma, M) \stackrel \sim \to M.$$} \item{ The cup-product  $$ H^1(\Gamma, M) \times H^1(\Gamma, \Hom(M,\Q/\Z)) \to H^2(\Gamma, \Q/\Z)=\Q/\Z$$ is a perfect pairing of finite abelian groups.} \item{Denote by $\widehat{\Gamma}$ the profinite completion of $\Gamma$. For $i \leq 2$, consider the natural arrow, given by inflation, $$H^i(\widehat{\Gamma}, M) \xrightarrow{\theta^i} H^i(\Gamma, M) \, ,$$ where cohomology used at the source is that of a profinite group, with  discrete coefficients (it is $\varinjlim H^i(\Gamma/\Gamma_0,M)$, where $\Gamma_0 \subset \Gamma$ runs through normal subgroups of finite index, acting trivially on $M$) and cohomology at the target is usual group cohomology. Then, $\theta ^i$ is an isomorphism.}
    
 \end{enumerate}
   
\end{prop}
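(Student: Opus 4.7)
\textbf{Plan for the proof of Proposition \ref{PropTopo}.}

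For part (1), I would use the standard finite presentation of the surface group,
\[ \Gamma = \langle a_1, b_1, \ldots, a_g, b_g \mid [a_1,b_1]\cdots[a_g,b_g]=1 \rangle, \]
coming from the usual $4g$-gon identification of $S_g$. Abelianizing kills the single relator (a product of commutators), leaving the free abelian group $\Z^{2g}$ on the images of the generators.

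For part (2), the key point is that the universal cover $\widetilde{S_g}$ is contractible: it is $\R^2$ when $g=1$ and the hyperbolic plane $\HH^2$ when $g\geq 2$. Consequently $S_g$ is a $K(\Gamma,1)$, so for any $\Gamma$-module with trivial action one has a canonical isomorphism $H^i(\Gamma,M)\simeq H^i_{\mathrm{singular}}(S_g,M)$. Since $S_g$ is a closed connected orientable $2$-manifold, the fundamental class furnishes $H^2(S_g,M)\simeq M$ (apply this with $M=\Z$ first and then pass to arbitrary $M$ via the universal coefficient theorem, observing that $H^2(S_g,\Z)\simeq\Z$ has no torsion obstruction).

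For part (3), I would appeal to Poincaré duality with local (i.e.\ twisted) coefficients on $S_g$. The $\Gamma$-module $M$ defines a local system $\mathcal M$ on $S_g = K(\Gamma,1)$, and $M^\vee := \Hom(M,\Q/\Z)$ defines the Pontryagin-dual local system $\mathcal M^\vee$. Cap product with the fundamental class $[S_g]\in H_2(S_g,\Z)$ gives isomorphisms $H^i(S_g,\mathcal M)\simeq H_{2-i}(S_g,\mathcal M)$, and composing with the evaluation pairing $\mathcal M\otimes\mathcal M^\vee\to\Q/\Z$ (the constant sheaf, since $\Gamma$ acts trivially on $\Q/\Z$) yields the claimed perfect cup-product pairing, whose target is identified with $\Q/\Z$ by part (2).

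For part (4), I would argue degree by degree. The case $i=0$ is immediate since both sides compute $M^\Gamma$. The case $i=1$ follows because any (continuous, discrete-target) crossed homomorphism $\Gamma\to M$ with $M$ finite factors through a finite quotient of $\Gamma$ (the finite quotient on which the action factors, together with the kernel of the cocycle viewed as a map to finite $M$): hence the inflation map is a bijection. The main obstacle is $i=2$. My strategy would be to reduce to the case of trivial action by passing to an open normal subgroup $\Gamma_0\subset\Gamma$ (of finite index) acting trivially on $M$, then compare the Hochschild-Serre spectral sequences for $\Gamma_0\subset\Gamma$ and $\overline{\Gamma_0}\subset\widehat\Gamma$ (note $\widehat\Gamma/\overline{\Gamma_0}\simeq\Gamma/\Gamma_0$ is finite). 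Granting that the result holds for $\Gamma_0$ in degree $\leq 2$, a comparison of five-term exact sequences and of the $E_2^{p,q}$-terms with $p+q=2$ forces $\theta^2$ to be an iso for $\Gamma$. For the base case of trivial action on $M$, one must show that every central extension $1\to M\to E\to \Gamma_0\to 1$ with $M$ finite has $E$ residually finite, so that the class is captured by some finite quotient; this is the classical residual finiteness of surface groups extended to such central extensions, and is the genuinely delicate point of the whole proposition.
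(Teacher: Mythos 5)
Parts (1)--(3), and part (4) for $i\leq 1$, follow the paper's route closely and are fine as planned. The issue is part (4) for $i=2$, and specifically the surjectivity of $\theta^2$.

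Your plan reduces to the case of trivial action on $M$ and then defers the whole weight to the claim that every central extension $1\to M\to E\to\Gamma_0\to 1$ with $M$ finite is residually finite, calling it ``classical'' but not proving it. This is a genuine gap, not a mere omission of routine detail: residual finiteness of \emph{arbitrary} central extensions of a surface group by finite groups is essentially the statement that surface groups are good in Serre's sense, which is exactly what part (4) asserts. So taken literally your plan is circular, or at least relocates the difficulty rather than resolving it. (For $g=1$ it is elementary, since $E$ is then virtually abelian; for $g\geq 2$ it is not an off-the-shelf fact that one can invoke before establishing something like part (4).)

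The paper's argument sidesteps this entirely and reduces $\theta^2$ to $\theta^1$, which has already been shown surjective. Given $c\in H^2(\Gamma,M)$, one first restricts to a finite-index $\Gamma_0$ acting trivially, does d\'evissage to $M=\F_p$, and then uses Poincar\'e duality in the concrete form that a generator of $H^2(\Gamma,\F_p)\simeq\F_p$ is a cup product $x_1\cup x_2$ with $x_i\in H^1(\Gamma,\F_p)$; restricting to $\Ker(x_1)$ (finite index) kills $c$. This produces a finite-index normal $\Gamma_1$ with $\Res_{\Gamma_1}(c)=0$. Then the induced-module extension $0\to M\to M[\Gamma/\Gamma_1]\to N\to 0$, together with Shapiro's lemma, shows $\iota_*(c)=0$, so $c$ is of the form $(E)\cup x$ for some $x\in H^1(\Gamma,N)$. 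Since $\theta^1$ is onto, $x$ comes from a finite quotient $\Gamma/\Gamma_2$, and then $c$, as a Yoneda cup product of two extensions of finite $\Gamma/\Gamma_2$-modules, is inflated from $H^2(\Gamma/\Gamma_2,M)$. You should replace the residual-finiteness claim with this cup-product/Shapiro reduction; the Hochschild--Serre comparison you sketch is fine as a bridge once the trivial-action base case is handled, but the base case is the crux and needs an actual argument.
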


\begin{dem}
To prove (1), it is convenient to use the classical presentation of $\Gamma$ by generators and relations: \[ \Gamma= \langle X_1, Y_1, \ldots, X_g, Y_g \, \vert \, [X_1,Y_1] \ldots [X_g,Y_g]=1 \rangle.\] From there, the result is obvious.
Let us prove (2). The fact that $H^i(\Gamma, M) \to H^i_{sing}(S,M)$ is an isomorphism, is a general fact that holds because the universal cover of $S$ is contractible. The second assertion is classical, from the assumptions made on $S$. With the help of (1), item (3) is a direct consequence of Poincar\'e duality. 

It remains to deal with (4). It is clear that $\theta^0$ is an isomorphism. Let $\Gamma_0 \subset \Gamma$ be a normal subgroup of finite index, acting trivially on $M$. There is the inflation-restriction sequence \[ 0 \to H^1(\Gamma / \Gamma_0,M) \xrightarrow{\Inf} H^1(\Gamma,M)  \xrightarrow{\Res} H^1(\Gamma_0,M)= \Hom(\Gamma_0,M). \]  It follows from the same exact sequence for any finite index normal subgroup $\Gamma_1$ of $\Gamma$ that $\theta^1$ is injective. Given a class $c \in H^1(\Gamma,M)$, let $\Gamma_1 \subset \Gamma$ be a normal subgroup of finite index, contained in $\Ker(\Res(c)) \subset \Gamma_0$. Using the inflation-restriction sequence for $\Gamma_1$, one sees that $c$ is inflated from  $H^1(\Gamma/\Gamma_1,M)$. This proves surjectivity of $\theta^1$. For $i=2$, and for any $\Gamma_0$ as above, there is an exact sequence \[H^0(\Gamma / \Gamma_0,H^1( \Gamma_0,M) )\xrightarrow{e} H^2(\Gamma / \Gamma_0,M) \xrightarrow{\Inf} H^2(\Gamma,M) . \] Pick $x \in H^2(\Gamma / \Gamma_0,M) ,$ with $\Inf(x)=0$. Pick an invariant class $c \in H^1( \Gamma_0,M)=\Hom(\Gamma_0,M)$, such that $e(c)=x$. As above, let $\Gamma_1 \subset \Gamma$ be any normal subgroup of finite index, contained in $\Ker(\Res(c))$. By a little diagram chase, using the exact sequence above and its analogue for $\Gamma_1$, one concludes that the inflation of $x$ in $H^2(\Gamma / \Gamma_1,M)$ vanishes. This proves injectivity of $\theta^2$. It remains to  prove surjectivity. Pick some $c \in H^2(\Gamma,M)$.

Let us first prove that its  restriction to some subgroup of finite index vanishes. For $\Gamma_0$ as above,  considering  $\Res(c) \in H^2(\Gamma_0,M)$, one first reduces to the case where the action of $\Gamma$ on $M$ is trivial. By d\'evissage on the finite abelian group $M$, one reduces further, to $M=\F_p$. By Poincaré duality, $H^2(\Gamma,\F_p)\simeq \F_p$  has a generator of the shape $x_1 \cup x_2$, for $x_1,x_2 \in H^1(\Gamma,\F_p)$, so that $\Ker(x_1) \subset \Gamma$ does the job. Thus, there exists a normal subgroup of finite index $\Gamma_1 \subset \Gamma$, acting trivially on $M$, and such that $\Res_{\Gamma_1}(c)=0 \in H^2(\Gamma_1,M).$ Introduce the extension of finite $(\Gamma/\Gamma_1)$-modules \[(E): 0 \to M \xrightarrow{\iota} M[\Gamma/\Gamma_1] \to N \to 0,\] where $\iota$ is the natural map. By Shapiro's Lemma, $$\iota_*(c)=0 \in H^2(\Gamma,M[\Gamma/\Gamma_1]) \simeq H^2(\Gamma_1,M).$$ Hence, $c$ is of the form $(E) \cup x,$ for some $x \in H^1(\Gamma,N)$. Let $ \Gamma_2 \subset \Gamma_1$ be a subgroup of finite index, normal in  $ \Gamma$, and such that  $x \in H^1(\Gamma/\Gamma_2,N)$. As the cup-product of two extensions of finite $(\Gamma/\Gamma_2)$-modules, $c$ is then inflated from   $H^2(\Gamma/\Gamma_2,M)$, proving surjectivity of  $\theta^2$.
\end{dem}

\begin{rem} \label{rem completion}
    A representation $\Gamma \to \GL_d(\Z/p^r)$ is the same as a continuous representation  $\widehat\Gamma \to \GL_d(\Z/p^r)$. Thus, the lifting problems considered in this text are the same for  $\Gamma$ and  $\widehat \Gamma$. Point (4) of the preceding Lemma states they also share the same cohomology groups. [Note  that $H^i(\widehat{\Gamma}, M) = H^i(\Gamma, M)=0,$ for $i \geq 3$.] All this is a particular case of ``cohomological goodness''  (in the sense of \cite{Se}, I.2.6, exercise 2) for surface groups, see for instance \cite{GJZ}, Proposition 3.7.
\end{rem}
\begin{prop}\label{Prop2G}
    Let $(\Gamma,\Z_p(1))$ be one of the following. \begin{enumerate}
        \item{The absolute Galois  group $\Gamma=\Gal(F^{sep} /F)$,   of a local field $F$ ($\R$, $\C$, a finite extension of $\Q_\ell$, or a finite extension of  $\F_\ell((t))$, for a prime $\ell$ possibly equal to $p$). Define the module $\Z_p(1)$ to be $\Z_p$ if $\mbox{char}(F)=p$, or the Tate module of roots of unity of $p$-primary order in $\overline F$, if $\mbox{char}(F) \neq p$.} 
        \item{The profinite completion $\Gamma$ of the topological fundamental group $\Gamma_{g,top}$ of a closed connected orientable  surface $S_g$, of genus $g \geq 1$. Take $\Z_p(1)$ to be the trivial module $\Z_p$.}
    \end{enumerate}
        Then the group $\Gamma$ is $p$-manageable, with respect to $\Z_p(1)$.
       
\end{prop}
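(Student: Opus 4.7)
I would treat the two cases separately, with tools of different flavors: Tate's local duality for case (1), and Poincaré duality (via Proposition \ref{PropTopo}) for case (2). In both cases, the W90 property is essentially elementary, while $p$-manageability is the substantial assertion. Neither ingredient is, individually, difficult; the care is in packaging them correctly for the pro-$p$-Sylow.

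For case (1), I would take $\Gamma_p := \Gal(F^{sep}/F')$, where $F' \subset F^{sep}$ is the fixed field of a chosen pro-$p$-Sylow, so that the supernatural degree $[F':F]$ is prime to $p$. Since $[F(\mu_p):F]$ divides $p-1$, it is prime to $p$, hence $\mu_p \subset F'$ and $\mu_p \simeq \F_p$ as $\Gamma_p$-modules. Writing $F'$ as the directed union of its finite subextensions $E/F$, Tate's local duality at each $E$ gives a perfect pairing $H^1(E,\F_p) \times H^1(E,\mu_p) \to H^2(E,\mu_p) \simeq \Br(E)[p] \simeq \F_p$. The transition map on $\Br(E)[p]$ for $E \subset E' \subset F'$ is multiplication by the prime-to-$p$ integer $[E':E]$ on $\Q/\Z$-coefficients, hence is an isomorphism on $p$-torsion. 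Passing to the colimit gives $H^2(\Gamma_p, \F_p) \simeq \F_p$; the left kernel of the limit cup-product pairing vanishes, since any element of it is represented at some finite level $E$, where perfectness of Tate's pairing forces it to be zero. For W90, Kummer theory (Artin--Schreier--Witt in positive characteristic) identifies $H^1(\Gamma_p,\mu_{p^r})$ with $F'^{\times}/(F'^{\times})^{p^r}$, and the reduction to $F'^{\times}/(F'^{\times})^{p}$ is tautologically surjective.

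For case (2), I would take $\Gamma_p$ to be the pro-$p$ completion of the topological surface group $\Gamma_{g,top}$, which embeds into $\Gamma = \widehat{\Gamma_{g,top}}$ because surface groups are residually-$p$ for every prime. By cohomological goodness at the pro-$p$ level (see Remark \ref{rem completion} and \cite{GJZ}), $H^i(\Gamma_p, M)$ agrees with the discrete group cohomology $H^i(\Gamma_{g,top}^{(p)}, M)$ for finite $p$-primary $M$ and $i \leq 2$, which further agrees with $H^i(\Gamma_{g,top}, M)$ by another application of goodness. Proposition \ref{PropTopo}(2) then yields $H^2(\Gamma_p, \F_p) \simeq \F_p$, while Proposition \ref{PropTopo}(3) provides the perfect cup-product pairing on $H^1(\Gamma_p, \F_p)$; in particular, its left kernel is trivial. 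For W90, the module $\Z_p(1) = \Z_p$ carries the trivial $\Gamma$-action; using $(\Gamma_p)^{\mathrm{ab}} \simeq \Z_p^{2g}$ (coming from Proposition \ref{PropTopo}(1) and the fact that the defining relation $\prod [X_i,Y_i]=1$ dies in any abelian quotient), one has $H^1(\Gamma_p, \Z/p^r) = (\Z/p^r)^{2g}$, and the reduction to $\F_p^{2g}$ is manifestly surjective.

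The main obstacle will be handling the pro-$p$-Sylow rigorously in each case. In case (1), the delicate point is checking that the colimit of Brauer groups has the expected $p$-torsion $\F_p$, and that triviality of the left kernel at the limit really does descend to perfectness at each finite level. In case (2), the subtlety is to argue that the pro-$p$ completion of $\Gamma_{g,top}$ sits inside the profinite completion as a genuine pro-$p$-Sylow, and that cohomological goodness propagates to it; if this causes trouble, one can simply cite Labute's direct computation exhibiting $\Gamma_p$ as a Demu$\check{\textup{s}}$kin group of rank $2g$, for which $p$-manageability is essentially built into the definition.
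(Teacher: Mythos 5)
Your case (1) argument is sound for $\mathrm{char}(F)\neq p$ and is essentially the paper's (which simply invokes local class field theory at every finite level $E/F$ and passes to the colimit over the prime-to-$p$ tower). However, you never address the equal-characteristic case $F\supset\F_\ell((t))$ with $\ell=p$: there $\mu_p=\{1\}$, so the chain $H^2(E,\mu_p)\simeq\Br(E)[p]\simeq\F_p$ fails at the first link, and indeed $H^2(\Gamma_p,\F_p)=0$ since $\mathrm{cd}_p(\Gamma_F)=1$ by Artin--Schreier. The paper treats this case separately (``$p$-cohomological dimension $1$''). Similarly your W90 argument via Kummer theory only applies in $\mathrm{char}\neq p$; the equal-characteristic case needs Artin--Schreier--Witt, which you mention in passing but never integrate into the proof of $p$-manageability.

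The more serious gap is in case (2). You declare $\Gamma_p$ to be ``the pro-$p$ completion of $\Gamma_{g,\mathrm{top}}$, which embeds into $\widehat{\Gamma_{g,\mathrm{top}}}$.'' This conflates two different objects. The pro-$p$ completion $\Gamma_{g,\mathrm{top}}^{(p)}$ is the maximal pro-$p$ \emph{quotient} of $\widehat{\Gamma_{g,\mathrm{top}}}$; residual $p$-finiteness gives an embedding of the discrete group into $\Gamma_{g,\mathrm{top}}^{(p)}$, not an embedding of $\Gamma_{g,\mathrm{top}}^{(p)}$ into $\widehat{\Gamma_{g,\mathrm{top}}}$. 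More to the point, for $g\geq 2$ the pro-$p$-Sylow $\Gamma_p\subset\widehat{\Gamma_{g,\mathrm{top}}}$ is \emph{not} isomorphic to $\Gamma_{g,\mathrm{top}}^{(p)}$: one has $H^1(\Gamma_p,\F_p)=\varinjlim_{\Gamma'} H^1(\Gamma',\F_p)$ over finite-index subgroups $\Gamma'\subset\Gamma_{g,\mathrm{top}}$ with $[\Gamma:\Gamma']$ prime to $p$ (these are surface groups of genus $g'=1+[\Gamma:\Gamma'](g-1)\to\infty$), with injective transition maps, hence this $H^1$ is infinite-dimensional, whereas Labute's Demu\v{s}kin group $\Gamma_{g,\mathrm{top}}^{(p)}$ has $H^1\simeq\F_p^{2g}$. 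So the fallback to cite Labute does not apply either; the paper is explicit that $H^1(\Gamma,\F_p)$ being finite is ``not required for our method to work'' precisely because the actual Sylow has infinite rank. The paper's route (apply Poincaré duality and restriction--corestriction at each finite level $\Gamma'$ and pass to the limit) avoids any identification of $\Gamma_p$ with a concrete pro-$p$ group and is the way this must go. Your W90 computation for case (2), resting on $(\Gamma_p)^{\mathrm{ab}}\simeq\Z_p^{2g}$, inherits the same error, though the intended conclusion is salvageable by the same level-by-level limit argument.
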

    \begin{dem}
    Let us deal with item (1). The case $F=\C$ is trivial, and $F=\R$ is an exercise. If $\mbox{char}(F)=p$, then $F$ is of $p$-cohomological dimension $1$ by Artin-Schreier theory (see \cite{Se}, II, Proposition 3). This is a much stronger property than $p$-manageability. In the remaining cases, $\mbox{char}(F) \neq p$. The fact that $\Gamma$ is $p$-manageable is then a straighforward consequence of local class field theory, stating in particular that $H^2(E, \F_p(1)) \simeq \F_p$, for every finite extension $E/F$. Observe that the property wH90  simply follows from Hilbert's Theorem 90 for $\G_m$, namely $H^1(F,F^{sep \times })=0$. Let us deal with item (2). The wH90 property follows from item (1) of Proposition \ref{PropTopo}, which also holds  for every subgroup of finite index of $\Gamma_{g,top}$. By item (2) of this Proposition, one has $ H^2(\Gamma'_{g',top}, \F_p)= \F_p,$ for every  subgroup of finite index $\Gamma'_{g',top} \subset \Gamma_{g,top}$ (since such a subgroup is of the same geometric origin). If this index is prime-to-$p$, using restriction/corestriction, one sees that the restriction $H^2(\Gamma_{g,top}, \F_p) \to  H^2(\Gamma'_{g',top}, \F_p)$ is injective -- hence an isomorphism of one-dimensional $\F_p$-vector spaces. By a straightforward limit argument, condition (1) of  definition \ref{Defidemu} is thus satisfied. Condition (2) holds by item (3) of Proposition \ref{PropTopo}.
    \end{dem}

    \begin{rem}
        In the Proposition above, $H^1(\Gamma, \F_p)$ is finite-dimensional. However, this is not required for our method to work.
    \end{rem}

   \begin{rem}
    Let $E/F$ be an infinite algebraic extension of a local field $F$, of degree divisible by $p^\infty$, as a supernatural number. This means here that $E=\varinjlim_{i \in \N} E_i$, where $F \subset E_i \subset E_{i+1}$ are finite extensions, whose degrees $[E_{i+1}:E_i]$ are all divisible by $p$. Local class field theory identifies  the restriction $$\Br(E_i ) \to \Br(E_{i+1})$$  to $$ \Q/ \Z \xrightarrow{ [E_{i+1}:E_i] \Id} \Q /\Z.$$ Since $$\Br(E)=\varinjlim_{i \in \N} \Br(E_i),$$ One infers that $\Br(E)[p]=0$, so that the $p$-cohomological dimension of $E$ is $\leq 1$.\\ \noindent  [In particular, this applies to  $E=\varinjlim F(\mu_{p^i})$, the cyclotomic $p$-extension of $F$.]\\ Therefore, all lifting problems considered in this text (for $\Gamma_E$) can easily be solved. 
\end{rem}

\section{When $H^1(\Gamma,\F_p)$ is finite}\label{PDG}

    In this paper,  lifting theorems  hold for all $p$-manageable profinite groups. One can then raise the following natural question.\\ \textit{Let $\Gamma$ be a $p$-Demu$\check{s}$kin profinite group. Does there exist a continuous character $\chi: \Gamma \to \Z_p^\times$, with respect to which $\Gamma$ is  $p$-manageable? }

   As the question is stated, the answer is most likely negative. However, it is positive if $\Gamma$ is a topologically finitely generated pro-$p$-group (i.e. if $\Gamma$ is   Demu$\check{\textup{s}}$kin in the classical sense). This result is found in   \cite{Se2}, 9.3. The next Proposition is slightly more general: there,  $\Gamma$ is not required to be a pro-$p$-group. The proof we provide is constructive.
   
   \begin{prop}\label{FGcase}
       Let $\Gamma$ be a profinite group. Let $\F_p(1)$ be a $\Gamma$-module of order $p$, such that $\dim_{\F_p}(H^2(\Gamma,\F_p(1)))=1$, and such that  the  cup-product pairing  $$ H^1(\Gamma,\F_p) \times H^1(\Gamma,\F_p(1)) \to H^2(\Gamma,\F_p(1)) \simeq \F_p$$ is a perfect pairing of finite abelian groups. \\
          Then, there exists a  lift of $\F_p(1)$  to a  $\Z_p$-module of rank one, equipped with a continuous $\Gamma$-action, and denoted by $\Z_p(1)$, such that the natural arrow $$H^1(\Gamma,\Z/p^r(1)) \to H^1(\Gamma, \F_p(1))$$ is surjective for every $r\geq 1$. This $\Z_p(1)$ is unique up to iso.
           

   \end{prop}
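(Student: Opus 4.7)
The plan is to construct the character $\chi \colon \Gamma \to \Z_p^\times$ defining $\Z_p(1)$ as an inverse limit of continuous characters $\chi^{(r)} \colon \Gamma \to (\Z/p^r)^\times$, with $\chi^{(1)} = \chi_1$ the character giving $\F_p(1)$, built inductively so that at each step the reduction $H^1(\Gamma, \Z/p^{r+1}(1)) \to H^1(\Gamma, \Z/p^r(1))$ is surjective; composition then yields the required surjectivity onto $H^1(\Gamma, \F_p(1))$. Existence of some continuous lift of $\chi_1$ to $\Z_p^\times$ is automatic (via a Teichm\"uller section when $p$ is odd; trivially for $p=2$, where $\chi_1$ has trivial image), so the substance of the argument is the inductive adjustment.

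For the inductive step, assume $\chi^{(r)}$ satisfies the above surjectivity, and pick any continuous lift $\widetilde\chi^{(r+1)}$ of $\chi^{(r)}$. The associated short exact sequence
\[0 \to \F_p(1) \to \Z/p^{r+1}(1) \to \Z/p^r(1) \to 0\]
has extension class $e \in \Ext^1_{\Z_p[\Gamma]}(\Z/p^r(1), \F_p(1))$. The key observation is that $\Hom_{\Z_p}(\Z/p^r(1), \F_p(1)) \cong \F_p$ carries the \emph{trivial} $\Gamma$-action, since the $\chi^{(r)}$-twist on the source and the $\chi_1$-twist on the target cancel after reducing mod $p$; thus $e$ lies in $H^1(\Gamma, \F_p)$ with trivial coefficients. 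By naturality of cup product (using $\F_p \otimes_{\Z_p} \Z/p^r(1) = \F_p(1)$), the connecting map $\delta \colon H^1(\Gamma, \Z/p^r(1)) \to H^2(\Gamma, \F_p(1))$ factors as $\delta(x) = e \cup \bar x$, where $\bar x$ is the image of $x$ in $H^1(\Gamma, \F_p(1))$.

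By the induction hypothesis the reduction $H^1(\Gamma, \Z/p^r(1)) \to H^1(\Gamma, \F_p(1))$ is surjective, so $\delta = 0$ is equivalent to the linear functional $\bar x \mapsto e \cup \bar x$ vanishing on all of $H^1(\Gamma, \F_p(1))$, which by the perfect pairing hypothesis is equivalent to $e = 0$ in $H^1(\Gamma, \F_p)$. Any two lifts of $\chi^{(r)}$ differ by a continuous homomorphism $\eta \colon \Gamma \to \ker\bigl((\Z/p^{r+1})^\times \to (\Z/p^r)^\times\bigr) \cong \F_p$, i.e.\ by an element of $H^1(\Gamma, \F_p)$; a short cocycle computation (using the set-theoretic section $s(a)=a$ for representatives $a \in \{0,\ldots,p^r-1\}$ and the formula $c(g)(1) = g\cdot s(g^{-1}\cdot 1) - s(1)$) shows that modifying $\widetilde\chi^{(r+1)}$ by $\eta$ shifts $e$ by exactly $\eta$. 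Hence there is a unique $\eta$ for which $\chi^{(r+1)} := \widetilde\chi^{(r+1)} \cdot \eta$ has $e = 0$, completing the inductive step.

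Passing to the inverse limit yields the continuous character $\chi$ and the associated lift $\Z_p(1)$; uniqueness up to $\Gamma$-module isomorphism follows because the corrective $\eta$ at each stage is uniquely pinned down. The main technical hurdle is the cocycle computation confirming that varying the lift shifts $e$ by $\eta$ itself (with no extra $\chi_1$-factor), which is precisely what allows the perfect pairing to identify the correct correction: when written out, the $\chi^{(r)}(g)^{\pm 1}$ factors appearing in the cocycle formula combine to $1$ modulo $p$, leaving a clean translation by $\eta$.
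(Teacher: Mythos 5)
Your argument has a genuine gap at the existence step of the induction. The assertion that the extension $0 \to \F_p(1) \to \Z/p^{r+1}(1) \to \Z/p^r(1) \to 0$ has a class $e$ lying in $H^1(\Gamma, \F_p) = H^1(\Gamma, \Hom_\Z(\Z/p^r(1), \F_p(1)))$, and hence that $\delta(x) = e \cup \bar x$, is false. An extension of $\Gamma$-modules has its class in that $H^1$ exactly when it admits an underlying $\Z$-linear (additive) splitting; here the middle term $\Z/p^{r+1}$ is not isomorphic to $\F_p \oplus \Z/p^r$ as an abelian group, so no additive section $s$ exists. Your cocycle formula $c(g)(1) = g\cdot s(g^{-1}\cdot 1) - s(1)$, run with a merely set-theoretic $s$, does not produce a $\Hom$-valued $1$-cocycle: what it computes is a Bockstein, which lives in $\Ext^1_{\Z[\Gamma]}(\Z/p^r(1),\F_p(1))$ and is \emph{not} a cup product with a class in $H^1(\Gamma,\F_p)$. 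Equivalently, the claimed factorization of $\delta$ through the reduction $H^1(\Gamma, \Z/p^r(1)) \to H^1(\Gamma, \F_p(1))$ is not automatic; indeed ``$\delta$ is cup product with some class in $H^1(\Gamma,\F_p)$'' is logically \emph{equivalent} to ``some lift of $\chi^{(r)}$ has $\delta=0$'' (shift the given lift by the negative of that class), which is exactly what you set out to prove, so the step is circular.

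What you do have right is the uniqueness structure: two lifts of $\chi^{(r)}$ differ by an $\eta \in H^1(\Gamma,\F_p)$, and the \emph{difference} of the two connecting maps is cup product with $\eta$ by Baer-sum compatibility; this is precisely the paper's uniqueness argument. For existence the paper does not use the naive reduction sequence at all. It passes to the Witt-vector reduction sequence $0 \to \Frob_*(\Z/p^r(p)) \to \W_2(\Z/p^r(1)) \to \Z/p^r(1)\to 0$ of $(\W_2(\Z/p^r),\Gamma)$-modules, whose connecting map $\beta$ is $\W_2(\Z/p^r)$-linear; the ideal $V=\Ker(\pi)\subset\W_2(\Z/p^r)$ annihilates $H^1(\Gamma,\Z/p^r(1))$, and the identity ``$V$-torsion $=$ $p$-torsion'' for the target module then \emph{forces} $\beta$ to factor through $\overline\beta\colon H^1(\Gamma,\F_p(1)) \to H^2(\Gamma,\F_p(p))$, which the perfect-pairing hypothesis identifies with cup product by some $e_r\in H^1(\Gamma,\F_p)$. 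Untwisting, taking a Baer difference, and pushing forward along the ring map $\Phi\colon\W_2(\Z/p^r)\to\Z/p^{r+1}$ of Lemma \ref{LemCar} then produces the desired $\Z/p^{r+1}(1)$. That $\W_2(\Z/p^r)$-linearity is exactly the ingredient that supplies the factorization your argument assumes without justification.
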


   \begin{dem}
  By induction on $r$, assume built a lift  of $\F_p(1)$ to a  $(\Gamma,r)$-bundle $\Z/p^r(1)$,  satisfying the required property. Let us explain how to build $\Z/p^{r+1}(1)$.
To do so, it is very convenient to use the ring scheme of truncated Witt vectors of length two, $\W_2$. Consider the natural surjection of rings \begin{align*} \pi: \W_2(\Z/p^r)&\to \Z/p^r, \\  (x_0,x_1) &\mapsto x_0.\end{align*} Recall that the Teichm\"uller lift  for line bundles exists in full generality, as defined in \cite{DCFLA}, 4.1. Denote by $\W_2(\Z/p^r(1))$ the Teichm\"uller lift of  $\Z/p^r(1)$: it is a free $\W_2(\Z/p^r)$-module of rank one, equipped with a $\Gamma$-action. It lifts $\Z/p^r(1)$  via $\pi$. Consider the reduction sequence of the $\W_2$-line bundle $ \W_2(\Z/p^r (1))$ (see \cite{DCFLA}, 4.1) \[(\mathcal W_r): 0 \to \Frob_*( \Z/p^r(p)) \to \W_2(\Z/p^r (1))\xrightarrow{\pi(1)} \Z/p^r (1)\to 0.\] It is an exact sequence of $(\W_2(\Z/p^r),\Gamma)$-modules. Recall that $\Frob_*( \Z/p^r(p))$ denotes the abelian group $ \Z/p^r(p)$, seen as a $\W_2(\Z/p^r)$-module via the Witt vector Frobenius (a ring homomorphism)  \[\W_2(\Z/p^r) \xrightarrow{\Frob} \Z/p^r,\] \[(x_0,x_1) \mapsto x_0^p+px_1.\] [Likewise, on the right of $(\mathcal W_r)$, $\Z/p^r(1)$ should  be denoted by $\pi_*(\Z/p^r(1))$.]\\
The induced connecting map in cohomology, is a $\W_2(\Z/p^r)$-linear map  \[ \beta: H^1(\Gamma,\Z/p^r(1)) \to  \Frob_*( H^2(\Gamma,\Z/p^r(p))).\]
Observe that the ideal \[V:=\Ver(\Z/p^r) =\Ker(\pi)=\{0 \} \times \Z/p^r \subset \W_2(\Z/p^r) \] annihilates $H^1(\Gamma,\Z/p^r(1))$ (considered as a $\W_2(\Z/p^r)$-module via $\pi$). Hence, $\beta$ takes values in the $V$-torsion sub-module of $\Frob_*( H^2(\Gamma,\Z/p^r(p)))$.   From the equality $\Frob(0,x)=px$, it is straightforward to see that ``$V$-torsion=$p$-torsion'', for the $\W_2(\Z/p^r)$-module  $\Frob_*( H^2(\Gamma,\Z/p^r(p)))$. Applying $H^*(\Gamma,.)$ to the exact sequence \[0 \to\F_p(p) \to  \Z/p^r(p) \to  \Z/p^{r-1}(p) \to 0,\]  one gets a surjection of $\F_p$-vector spaces \[ H^2(\Gamma,\F_p(p)) \stackrel {\nat}  \twoheadrightarrow H^2(\Gamma,\Z/p^r(p))[p].\]
Thus, there exists a linear map of $\F_p$-vector spaces $\overline \beta$, giving the following factorisation of $\beta$: \[ H^1(\Gamma,\Z/p^r(1)) \stackrel {\nat}  \twoheadrightarrow H^1(\Gamma,\F_p(1))  \xrightarrow{\overline \beta } H^2(\Gamma,\F_p(p)) \xrightarrow{\nat} \Frob_*( H^2(\Gamma,\Z/p^r(p))).\] [The surjection $\nat$ on the left arises from   $\Z/p^r(1) \twoheadrightarrow \F_p(1)$.]\\
Since $\F_p(p-1)=\F_p$, by Poincaré duality mod $p$, $\overline \beta$ is of the shape \[\overline \beta: H^1(\Gamma,\F_p(1)) \to  H^2(\Gamma,\F_p(p)) \simeq \F_p,\]  \[x \mapsto x \cup e_r,\] for some (unique)  $e_r \in  H^1(\Gamma,\F_p(p-1))$.
Observe that, as a byproduct of the discussion on the $V$-torsion above, one gets \[\F_p= \F_p(p-1) = \Hom_{\F_p\text{-Mod}} (\F_p(1), \F_p(p))= \Hom_{\W_2(\Z/p^r)\text{-Mod}} (\Z/p^r(1),\Frob_*( \F_p(p))) \] \[=\Hom_{\W_2(\Z/p^r)\text{-Mod}} (\Z/p^r(1),\Frob_*( \Z/p^r(p))).\]
This provides an action of (the group) $\F_p$, on the trivial extension of $(\W_2(\Z/p^r),\Gamma)$-modules \[ 0 \to \Frob_*( \Z/p^r(p)) \to  \ast  \to  \Z/p^r(1) \to 0,\] by automorphisms of extension. Twisting it by the $\F_p$-torsor $e_r$ yields an extension   of $(\W_2(\Z/p^r),\Gamma)$-modules  \[ (\mathcal E_r): 0 \to \Frob_*( \Z/p^r(p)) \to  \ast  \to  \Z/p^r(1) \to 0,\] 
which is geometrically trivial  (=trivial as an extension of $\W_2(\Z/p^r)$-modules). Its connecting homomorphism is $\beta$ as well. Form the Baer difference \[\mathcal D_r:=(\mathcal W_r-\mathcal E_r): 0 \to \Frob_*(\Z/p^r(p)) \to L_r \xrightarrow{\lambda} \Z/p^r(1)\to 0.\]  It is an extension of $(\W_2(\Z/p^r),\Gamma)$-modules. Since $\mathcal E_r$ is geometrically trivial, $\mathcal D_r$ is geometrically isomorphic to $\mathcal W_r$. Consequently, its middle term $L_r$, as a $\W_2(\Z/p^r)$-module, is free of rank one.  By compatibility of connecting maps to Baer sum, one sees that the connecting map of $\mathcal D_r$ vanishes. Equivalently, the induced arrow $$H^1(\Gamma,L_r) \xrightarrow{H^1(\lambda)} H^1(\Gamma,\Z/p^r(1))$$ is surjective. Introduce the homomorphism $\Phi$ of Lemma \ref{LemCar}, and set
 $$\Z/p^{r+1}(1):=L_r\otimes_{\Phi} \Z/p^{r+1}.$$ As a $\Z/p^{r+1}$-module, $\Z/p^{r+1}(1)$ is free of rank one. Observe that $ \Z/p^{r+1}(1) \otimes_{\Z/p^{r+1}} \F_p$ is isomorphic to $\F_p(1)$ (given at the very start). However,  a priori, $$ \Z/p^r(1)':=\Z/p^{r+1}(1) \otimes_{\Z/p^{r+1}} \Z/p^r$$ need not be isomorphic to $\Z/p^r(1)$ (constructed in the previous step). Consider the natural reduction sequence of $(\Gamma,r+1)$-modules \[\mathcal K_{r+1}: 0 \to \Z/p^r(1)' \to \Z/p^{r+1}(1) \to \F_p(1)\to 0.\]Using the commutative diagram of Lemma \ref{LemCar}, one gets a commutative diagram of $(\Gamma,r+1)$-modules \[\xymatrix{ L_r\ar[r]^-{x \mapsto x \otimes 1} \ar[d]^\lambda & \Z/p^{r+1}(1) \ar[d]^{\nat_{r+1}} \\ \Z/p^r (1)\ar[r]^{\nat_r} & \F_p (1).}\] Since $H^1(\lambda)$ and $H^1(\nat_r)$ are surjective,  so is $$H^1(\nat_{r+1}): H^1(\Gamma,\Z/p^{r+1}(1)) \to H^1(\Gamma,\F_p(1)).$$ By uniqueness of $\Z/p^r(1)$, this shows that $\Z/p^{r}(1)'$  is actually isomorphic to $\Z/p^r(1)$. Observe that, as in Lemma \ref{lem Kummer surj}, the arrow $$H^1(\Gamma,\Z/p^{r+1}(1)) \to H^1(\Gamma,\Z/p^r(1))$$ is then surjective as well. \\ Let us prove uniqueness of $\Z/p^{r+1}(1)$. Denote by $\chi_r:\Gamma \to (\Z/p^r)^\times$ the character corresponding to $\Z/p^r(1)$. Assume that $\Z/p^{r+1}(1)'$ is some other choice for a suitable  lift of $\Z / p^r(1)$. Its character is then of the shape \[ \chi'_{r+1}=\chi_{r+1}+ \epsilon,\] where $$\epsilon:\Gamma \to (1+p^r\Z/p^{r+1}\Z)^\times \simeq \Z/p$$ is simply a mod $p$  character, corresponding to an extension of $(\Gamma,r)$-modules  \[ (\epsilon): 0 \to \F_p \to \ast \to\Z/p^r \to 0.\]  Consider the reduction sequences of $(\Gamma,r+1)$-modules \[ 0 \to \F_p(1) \to \Z/p^{r+1}(1) \to \Z/p^r(1)\to 0\] and \[ 0 \to \F_p(1) \to \Z/p^{r+1}(1)' \to \Z/p^r(1)\to 0.\] Since $\Z/p^{r+1}(1)$ and $\Z/p^{r+1}(1)'$ satisfy item (1), the connecting maps $H^1(\Gamma,\Z/p^r(1)) \to H^2(\Gamma,\F_p(1))$ of both extensions vanish.  Also, the  Baer difference of these extensions is  $(\epsilon)(1)$, i.e. $(\epsilon) \otimes_{\Z/p^r}\Z/p^r(1)$. Using compatibility of Baer sum to connecting maps, and surjectivity of $ H^1(\Gamma,\Z/p^r(1)) \to  H^1(\Gamma,\F_p(1))$, a small chase reveals that  $\epsilon \in H^1(\Gamma,\F_p)$ lies in the (left) kernel of the perfect cup-product pairing  $$ H^1(\Gamma,\F_p) \times H^1(\Gamma,\F_p(1)) \to H^2(\Gamma,\F_p(1)). $$ Hence $\epsilon$ vanishes, proving the uniqueness of $\Z / p^r(1)$.  
 \end{dem}

\begin{rem}
Assume that the premises of Proposition \ref{FGcase} also hold for every open subgroup $\Gamma' \subset \Gamma$, in place of $\Gamma$ (for the same coefficients $\F_p(1)$, restricted to $\Gamma'$). Observe that this is true in the Demu$\check{\textup{s}}$kin  case, i.e. when $\Gamma$ is a pro-$p$-group. Then,   for every such $\Gamma'$, $H^1(\Gamma',\Z/p^r(1)) \to H^1(\Gamma', \F_p(1))$ is also surjective. This  means that the pair $(\Gamma,\Z_p(1))$ is $(1,\infty)$-cyclotomic,    in the   sense of \cite{DCF1}.
\end{rem}

To conclude, we deal with the Lemma that was used in the previous proof.
  \begin{lem}\label{LemCar}
   For each $r \geq 1$,  there exists  a  ring homomorphism $$\Phi: \W_2(\Z/p^r) \to \Z/p^{r+1},$$ such that the diagram of ring homomorphisms\[\xymatrix{ \W_2(\Z/p^r) \ar[r]^\Phi \ar[d]^\pi & \Z/p^{r+1} \ar[d]^\nat \\ \Z/p^r \ar[r]^\nat & \F_p }\] commutes, where $\pi(x_0,x_1)=x_0$ is the natural surjection.
\end{lem}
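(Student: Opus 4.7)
The plan is to write down $\Phi$ explicitly via the ``second ghost component'' formula and verify directly that it is a well-defined ring homomorphism making the diagram commute. Concretely, given $(x_0,x_1)\in\W_2(\Z/p^r)$, I would pick arbitrary lifts $\tilde x_0,\tilde x_1\in\Z/p^{r+1}$ of $x_0,x_1$, and set
\[\Phi(x_0,x_1) := \tilde x_0^{\,p} + p\,\tilde x_1 \pmod{p^{r+1}}.\]
The point is that this is the first non-trivial ghost component $w_1$ of the Witt vector, which is well-known to be a ring homomorphism at the level of the universal polynomials; one just has to check that ``reducing the input mod $p^r$'' is compatible with ``reducing the output mod $p^{r+1}$''.

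First I would check that $\Phi$ is well-defined, i.e.\ independent of the choice of lifts. Replacing $\tilde x_0$ by $\tilde x_0+p^r u$, the binomial expansion gives
\[(\tilde x_0+p^r u)^p = \tilde x_0^{\,p} + \sum_{k=1}^{p}\binom{p}{k}\tilde x_0^{\,p-k}(p^r u)^k,\]
and each term with $k\ge 1$ is divisible by $p^{r+1}$: for $1\le k\le p-1$ one has $v_p\bigl(\binom{p}{k}p^{rk}\bigr)\ge 1+r\ge r+1$, and for $k=p$ one has $p^{rp}$ which lies in $p^{r+1}\Z$ as soon as $r(p-1)\ge 1$. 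Similarly $p(\tilde x_1+p^r v)\equiv p\tilde x_1\pmod{p^{r+1}}$. Hence $\Phi$ does not depend on the chosen lifts.

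Next I would verify that $\Phi$ is a ring homomorphism, using the universal Witt formulas. Recall that Witt addition and multiplication on $\W_2$ are given by
\[(a,b)+_W(c,d) = \bigl(a+c,\ b+d+S(a,c)\bigr),\qquad (a,b)\cdot_W(c,d) = \bigl(ac,\ a^p d+b c^p+p b d\bigr),\]
where $pS(a,c) = a^p+c^p-(a+c)^p$ in $\Z[a,c]$. Choosing the lifts $\tilde a+\tilde c$, $\tilde b+\tilde d+\tilde S(\tilde a,\tilde c)$, and $\tilde a\tilde c$, $\tilde a^p\tilde d+\tilde b\tilde c^p+p\tilde b\tilde d$ for the two sums and products, direct expansion yields
\[\Phi\bigl((a,b)+_W(c,d)\bigr) = (\tilde a+\tilde c)^p + p(\tilde b+\tilde d) + \bigl(\tilde a^p+\tilde c^p-(\tilde a+\tilde c)^p\bigr) = \Phi(a,b)+\Phi(c,d),\]
\[\Phi\bigl((a,b)\cdot_W(c,d)\bigr) = (\tilde a\tilde c)^p + p\tilde a^p\tilde d+p\tilde b\tilde c^p+p^2\tilde b\tilde d = (\tilde a^p+p\tilde b)(\tilde c^p+p\tilde d) = \Phi(a,b)\cdot\Phi(c,d),\]
all computed modulo $p^{r+1}$; compatibility of $\Phi(1,0)$ with $1$ is immediate.

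Finally, commutativity of the diagram is trivial: by Fermat's little theorem, $\tilde x_0^{\,p}\equiv\tilde x_0\pmod{p}$, so
\[\nat\bigl(\Phi(x_0,x_1)\bigr) = \tilde x_0^{\,p}+p\tilde x_1\pmod p = \tilde x_0\pmod p = x_0\pmod p = \nat\bigl(\pi(x_0,x_1)\bigr).\]
There is no genuine obstacle: the only place requiring real attention is the well-definedness step, which is why it bears the mild hypothesis $r\ge 1$ already built into the statement.
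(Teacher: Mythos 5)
Your proposal is correct and follows essentially the same route as the paper: both define $\Phi$ by the first ghost component/Witt Frobenius formula $(x_0,x_1)\mapsto x_0^p+px_1$, and the key point in both is that this map, a priori defined on $\W_2(\Z/p^{r+1})$, factors through the reduction $\W_2(\Z/p^{r+1})\to\W_2(\Z/p^r)$. The only difference in presentation is that you carry out the well-definedness (factorization) and the homomorphism verification by explicit polynomial computation, whereas the paper delegates this to \cite{DCFLA}, Lemma 5.5.
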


\begin{dem}
 Consider the Witt vector Frobenius  of the ring $\Z/p^{r+1}$, given by the first Witt polynomial:  \[\W_2(\Z/p^{r+1}) \xrightarrow{\Frob} \Z/p^{r+1},\] \[(x_0,x_1) \mapsto x_0^p+px_1.\]
    As in \cite{DCFLA}, Lemma 5.5 (to which we refer for details), one proves that $\Frob$ factors through the natural arrow $\W_2(\Z/p^{r+1}) \to \W_2(\Z/p^r)$, giving rise to the promised ring homomorphism. The commutativity of the diagram is readily checked.
\end{dem}

\begin{exo}
    Show that the ring $\W_2(\Z/p^r)$ is a $(\Z/p^{r+1})$-algebra.
\end{exo}

\section{Gluing, lifting and gluifting  extensions of $(\Gamma,r)$-bundles}\label{Sectiongluing}
 In this section, $\Gamma$ is any group, or profinite group. In the latter case, all representations are assumed to be continuous.
 
   \begin{defi}(Gluing)\\
   Assume given two extensions of $(\Gamma,r)$-bundles,
  \[\mathcal E_{d,r}: 0 \to V_{1, r}   \to V_{d,r}  \to V_{d/1,r}  \to 0 \] and 
   \[\mathcal F_{d,r}: 0 \to  V_{d/1,r} \to W_{d,r} \to  L_{d+1,r} \to 0, \]  where $V_{1, r} $  and $L_{d+1,r}$ are invertible, and $V_{d,r}$, $W_{d,r}$   are $d$-dimensional.\\
   A gluing of  $\mathcal E_{d,r}$ and $\mathcal F_{d,r}$ is a pair $(\mathcal E_{d+1,r},\phi_{r})$, consisting of an extension of $(\Gamma,r)$-bundles \[\mathcal E_{d+1,r}: 0 \to V_{d,r} \to V_{d+1,r} \to    L_{d+1,r} \to 0, \] and an isomorphism of  extensions of $(\Gamma,r)$-bundles   \[\phi_{r}: \pi_*(\mathcal E_{d+1,r}) \stackrel \sim \to  \mathcal F_{d+1,r},\] where $\pi: V_{d,r} \to V_{d/1,r} $ is the natural surjection introduced earlier, and $\pi_*(.)$ denotes the induced push-forward operation, at the level of extensions.\\
   Isomorphisms of gluings are defined in the obvious way.
  \end{defi}
   
  The obstruction to gluing $\mathcal E_{d,r}$ and $\mathcal F_{d,r}$ is the cup-product \[\obs(\mathcal E_{d+1,r}) :=\mathcal E_{d,r} \cup \mathcal F_{d,r}:  0 \to V_{1, r}  \to  V_{d,r} \to  W_{d,r}  \to   L_{d+1,r}\to 0.\] It is a $2$-extension of $(\Gamma,r)$-bundles, whose Yoneda class in $$ \Ext^2_{(\Gamma,r)-Mod}( L_{d+1,r},V_{1,r} )=H^2(\Gamma,L_{d+1,r}^{\vee} \otimes  V_{1,r} )$$ vanishes if, and only if, a pair $(\mathcal E_{d+1,r},\phi_{r})$ as above exists.
  
 \begin{defi}(Lifting).\\
Assume given an extension of $(\Gamma,r)$-bundles
  \[\mathcal E_r: 0 \to V_{j, r}   \to V_{k,r}  \to V_{k/j,r}  \to 0. \]  
   A lifting of  $\mathcal E_r$ (mod $p^{r+1}$)  is a pair $(\mathcal E_{r+1},\psi_r)$,   where\[\mathcal E_{r+1}: 0 \to V_{j,r+1} \to V_{k,r+1} \to    V_{k/j,r+1} \to 0 \] is an extension of $(\Gamma,r+1)$-bundles,   and where    \[\psi_r: q(\mathcal E_{r+1}) \stackrel \sim \to  \mathcal E_r \] is an isomorphism of  extensions of $(\Gamma,r)$-bundles.\\\noindent  [Recall the notation $q(.)=(.)\otimes_{\Z /p^{r+1}}(\Z/ p^r)$.] \\
   Isomorphisms of liftings are defined in the obvious way.
 \end{defi}

\begin{defi}
    Define  $$\GL(V_{j,r} \subset V_{k,r}) \subset \GL(V_{k,r}), $$ resp. $$\End(V_{j,1} \subset V_{k,1})  \subset \End(V_{k,1}),$$ as the subgroup,  resp. $\F_p$-subspace, of automorphisms, resp. endomorphisms, preserving $V_{j,r}$, resp. $V_{j,1}$. In block form, it is given by
 \[ \begin{pmatrix}
 \ast  & \ast \\ 0 & \ast 
 \end{pmatrix} \subset  \begin{pmatrix}
 \ast  & \ast \\ \ast & \ast

 \end{pmatrix},\] where blocks are of sizes $j$ and $k-j$. There is a natural reduction sequence $$ 0 \to  \End(V_{j,1} \subset V_{k,1}) \xrightarrow i  \GL(V_{j,r+1} \subset V_{k,r+1}) \xrightarrow q \GL(V_{j,r} \subset V_{k,r}) \to 1,$$ where $q$ is the natural reduction, and $$ i (\epsilon)= \Id+p^r \epsilon,$$ for $\epsilon \in \End(V_{j,1} \subset V_{k,1})$.

\end{defi}

Let $\cE_r$ be an extension of $(\Gamma,r)$-bundles. 
Via a classical construction in non-abelian cohomology (see \cite{Se} chapter 1, 5.6), the obstruction to lifting  $\mathcal E_r$ mod $p^{r+1}$  is a natural class  $$\obs(\mathcal E_{r+1}) \in H^2(\Gamma, \End(V_{j,1} \subset V_{k,1}) ).$$  
    \begin{defi}(Gluifting)\label{defigluift}\\
  Assume given two extensions of $(\Gamma,r+1)$-bundles,
  \[\mathcal E_{d,r+1}: 0 \to V_{1, r+1}   \to V_{d,r+1}  \to V_{d/1,r+1}  \to 0 \] and 
   \[\mathcal F_{d,r+1}: 0 \to  V_{d/1,r+1} \to W_{d,r+1} \to  L_{d+1,r+1} \to 0, \]
   and  a gluing
   $$(\mathcal E_{d+1,r},\phi_r)$$  of  the extensions of $(\Gamma,r)$-bundles $\mathcal E_{d,r}:=q(\mathcal E_{d,r+1})$ and $\mathcal F_{d,r}:=q(\mathcal F_{d,r+1})$.\\
 A lifting of the gluing $(\mathcal E_{d+1,r},\phi_r)$, is the data of a gluing $(\mathcal E_{d+1,r+1},\phi_{r+1})$ of  $\mathcal E_{d,r+1}$ and $\mathcal F_{d,r+1}$, together with an isomorphism \[\theta_r: q(\mathcal E_{d+1,r+1},\phi_{r+1}) \stackrel \sim \to (\mathcal E_{d+1,r},\phi_r),  \] as gluings of $\mathcal E_{d,r}$ and $\mathcal F_{d,r}$.\\
 Altogether, the data of $(\mathcal E_{d+1,r+1},\phi_{r+1}, \theta_r)$ is called a gluifting of $(\mathcal E_{d,r+1},\mathcal F_{d,r+1},\mathcal E_{d+1,r},\phi_r) $.
   \end{defi}
   
Gluifting is related to Grothendieck's ``extensions panach\'ees'', and makes sense in a much more general stacky setting,  worth investigation. However, we stick here to an elementary concrete approach.
Let us now explain how the obstruction to gluifting is  a natural reduction of $\obs(\mathcal E_{d+1,r+1})$ to a mod $p$ cohomology class. 
\begin{lem} \label{lem gluifting obs}
    The obstruction to gluifting, as above, is a natural class  $$c_1 \in \Ext^2_{(\Gamma,1)}( L_{d+1,1},L_{1,1})=H^2(\Gamma,L_{d+1,1}^{\vee} \otimes_{\F_p}  L_{1,1}),$$ such that  $$i_*(c_1)=\obs(\mathcal E_{d+1,r+1}) \in H^2(\Gamma,L_{d+1,r+1}^{\vee} \otimes_{\Z}  L_{1,r+1}).$$
\end{lem}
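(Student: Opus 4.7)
The plan is to construct $c_1$ by an explicit $2$-cocycle computation that refines the standard obstruction cocycle for $\mathcal{E}_{d+1, r+1}$, exploiting the gluing data $(\mathcal{F}_{d, r+1}, \phi_r)$ to kill the component of the obstruction lying in the pushforward.

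First, I recall how $\obs(\mathcal{E}_{d+1,r+1})$ is concretely produced. Fix an identification of underlying abelian groups $V_{d+1,r+1} \cong V_{d, r+1} \oplus L_{d+1, r+1}$ that lifts a chosen splitting of $\mathcal{E}_{d+1,r}$ as $\Z/p^r$-modules. Since $V_{d,r+1}$ and $L_{d+1, r+1}$ are themselves honest $(\Gamma, r+1)$-bundles, the failure of the transported $\Gamma$-action on $V_{d+1, r+1}$ to descend properly from the $\Gamma$-action on $V_{d+1, r}$ is a $2$-cocycle $c \colon \Gamma^2 \to L_{d+1,1}^{\vee} \otimes V_{d, 1}$, whose class is $\obs(\mathcal{E}_{d+1,r+1})$ (viewed inside $\End(V_{d,1}\subset V_{d+1,1})$ as the off-diagonal block).

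Next, I would modify this set-theoretic splitting so that its $\pi$-pushforward to $V_{d/1, r+1}$ matches, on the nose, the splitting coming from the chosen lift $\mathcal{F}_{d, r+1}$ (transported via $\phi_r$). The pushed-forward cocycle $\pi_{\ast} c$ is the obstruction cocycle of $\pi_{\ast} \mathcal{E}_{d+1,r} \cong \mathcal{F}_{d, r}$, and the mere existence of $\mathcal{F}_{d, r+1}$ exhibits it as the coboundary of an explicit $1$-cochain $\beta \colon \Gamma \to L_{d+1, 1}^{\vee} \otimes V_{d/1, 1}$. Using the surjection $V_{d,1} \twoheadrightarrow V_{d/1, 1}$, lift $\beta$ to a $1$-cochain $\alpha \colon \Gamma \to L_{d+1,1}^{\vee} \otimes V_{d,1}$ and replace $c$ by $c' := c - \partial \alpha$. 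By construction $\pi_{\ast} c' = 0$ pointwise, so $c'$ takes values in $\ker(\pi_{\ast}) = L_{d+1, 1}^{\vee} \otimes L_{1, 1}$; set $c_1 := [c'] \in H^2(\Gamma, L_{d+1,1}^{\vee} \otimes L_{1,1})$. The identity $i_{\ast}(c_1) = \obs(\mathcal{E}_{d+1,r+1})$ is then immediate: $c$ and $c'$ are cohomologous in $L_{d+1,1}^{\vee} \otimes V_{d, 1}$, and $c'$ factors through the inclusion $i \colon L_{1,1} \hookrightarrow V_{d, 1}$ of the first step of the flag.

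The main obstacle will be showing that $c_1$ is well defined as a cohomology class, independently of the initial set-theoretic splitting and of the lift $\alpha$. Two choices of $\alpha$ lifting the same $\beta$ differ by a $1$-cochain valued in $L_{1,1}$, which modifies $c'$ only by a coboundary in $L_{d+1, 1}^{\vee} \otimes L_{1, 1}$. Changing the initial splitting alters $c$ by a coboundary in $L_{d+1,1}^{\vee} \otimes V_{d, 1}$; after re-adjusting $\alpha$ accordingly, the net effect on $c'$ is again a coboundary in $L_{d+1, 1}^{\vee} \otimes L_{1, 1}$. This amounts to a short diagram chase in the long exact cohomology sequence attached to the extension $0 \to L_{1, 1} \to V_{d, 1} \to V_{d/1, 1} \to 0$. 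Naturality in the input data $(\mathcal{E}_{d, r+1}, \mathcal{F}_{d, r+1}, \mathcal{E}_{d+1, r}, \phi_r)$ then follows directly from the explicit cocycle construction.
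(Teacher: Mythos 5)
You have misidentified what $\obs(\mathcal E_{d+1,r+1})$ denotes in this Lemma, and as a result your final verification proves the wrong statement. You read $\obs(\mathcal E_{d+1,r+1})$ as the obstruction to lifting $\mathcal E_{d+1,r}$ to a $(\Gamma,r+1)$-extension, a class with $\F_p$-coefficients living in $H^2(\Gamma,L_{d+1,1}^\vee\otimes V_{d,1})$, and you correspondingly take $i$ to be the inclusion $L_{1,1}\hookrightarrow V_{d,1}$. But the Lemma explicitly places $\obs(\mathcal E_{d+1,r+1})$ in $H^2(\Gamma,L_{d+1,r+1}^\vee\otimes_\Z L_{1,r+1})$, with $\Z/p^{r+1}$-coefficients: following the passage right after the definition of gluing, this is the Yoneda class of the cup-product $\mathcal E_{d,r+1}\cup\mathcal F_{d,r+1}$, i.e.\ the obstruction to gluing the two level-$(r+1)$ extensions. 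Accordingly $i_*$ is pushforward along the injection $L_{d+1,1}^\vee\otimes L_{1,1}\hookrightarrow L_{d+1,r+1}^\vee\otimes L_{1,r+1}$ from the reduction exact sequence (multiplication by $p^r$), not along $L_{1,1}\hookrightarrow V_{d,1}$.

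Once this is fixed, your ``immediate'' identity $i_*(c_1)=\obs(\mathcal E_{d+1,r+1})$ is no longer a tautology about where the cocycle $c'$ takes values: it becomes a genuine compatibility between the mod-$p$ class you built from the adjusted cocycle and a mod-$p^{r+1}$ cup-product, and it is precisely the point the paper itself isolates and (regrettably) ``leaves to the reader'' at the end of its proof. You still need to supply it, e.g.\ by tracing the connecting map of the short exact sequence $0\to L_{d+1,1}^\vee\otimes L_{1,1}\to L_{d+1,r+1}^\vee\otimes L_{1,r+1}\to L_{d+1,r}^\vee\otimes L_{1,r}\to 0$ against a $1$-cochain presenting the given gluing $(\mathcal E_{d+1,r},\phi_r)$, and comparing with a cocycle representative of $\mathcal E_{d,r+1}\cup\mathcal F_{d,r+1}$. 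Your cocycle construction of $c_1$ itself is a reasonable, more hands-on alternative to the paper's torsor-theoretic route (the auxiliary sequences $\mathcal Q$ and $\mathcal N$, with $c_1$ the connecting image of the torsor $Y$), and the well-definedness discussion is sound; but the second half of the Lemma remains unestablished.
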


\begin{dem}

 To simplify, assume first that $L_{d+1,r+1}=\Z/p^{r+1}$ is trivial.\\ 
 Consider the natural surjection of $(\Gamma,r+1)$-modules 
 \[  V_{d,r} \oplus  (V_{d,r+1}/L_{1,r+1}) \stackrel {f} \to V_{d/1,r} \to 0, \] given by $$f(v,l):= \pi(v)-q(l).$$ It fits into an extension  of $(\Gamma,r+1)$-modules \[\mathcal Q: 0 \to N  \to  V_{d,r} \oplus  V_{d/1,r+1} \stackrel {f}\to V_{d/1,r} \to 0, \]whose  kernel $N$ naturally fits into the exact sequence
 \[\mathcal N: 0 \to L_{1,1}  \to    V_{d,r+1} \stackrel {s} \to N \to 0,\] where \[ s(v):= (q(v),\pi( v)) \in N \subset  V_{d,r} \oplus  V_{d/1,r+1}.\]  The injection in $\mathcal N$ is given by the composite inclusion $$ L_{1,1} \stackrel {\iota} \hookrightarrow V_{d,1}\stackrel {i}   \hookrightarrow V_{d,r+1}.$$

Since $L_{d+1,r+1}=\Z/p^{r+1}$, we can  consider   $\mathcal E_{d+1,r}$, resp.  $\mathcal F_{d,r+1}$, as a torsor under the $(\Gamma,r+1)$-module $V_{d,r}$, resp. $ (V_{d,r+1}/L_{d,r+1})$. Taking their direct product, one gets a torsor $X$, under  the  $(\Gamma,r+1)$-module $V_{d,r} \oplus  (V_{d,r+1}/L_{1,r+1})$.
The data of the gluing $(\mathcal E_{d+1,r},\phi_r)$  then yields a natural trivialization of  $q_*(X)$, which is a $\Gamma $-torsor under $V_{d/1,r}$. Using the extension $\mathcal Q$, we get a natural torsor $Y$, under $N$, together with a natural isomorphism  $X \stackrel \sim \to \iota_*(Y)$. Lifting the gluing  $(\mathcal E_{d+1,r},\phi_1)$ is then equivalent to  lifting $Y$, to a $\Gamma$-torsor under $V_{d,r+1}$, via $s$.  Using the connecting map associated to $\mathcal N$, one sees this is  obstructed by a class $$c_1 \in H^2(\Gamma,L_{1,1} ).$$ It is left to the reader, to check that  $i_*(c_1)=\obs(\mathcal E_{d+1,r+1}).$ \\  
The general case, where $L_{d+1,r+1}$ is not assumed to be trivial, reduces to the previous one by  applying $(.\otimes L_{d+1,r+1} ^{\vee})$, the tensor product of $(\Gamma,r+1)$-modules. 
In other words, replacing all $(\Gamma,r+1)$-modules $M$ by $M \otimes  L_{d+1,r+1} ^{\vee}$, we are sent back to the case $ L_{d+1,r+1}= \Z/ p^{r+1}$. The proof is complete.
\end{dem}

\section{Lifting (wound) Kummer flags}

In this section, $\Gamma$ is a $p$-manageable profinite group, with respect to a given $\Z_p(1)$. \\ For instance, one may take one of the two pairs of Proposition \ref{Prop2G}. More generally, by Proposition \ref{FGcase}, lifting theorems of this section apply to all profinite groups satisfying mod $p$ Poincaré duality in dimension $2$.

 Let $\nabla_r = (V_{d,r})$ be a $d$-dimensional complete $(\Gamma,r)$-flag. Under suitable assumptions, we prove  that $\nabla_r$ lifts to a complete $(\Gamma, r+1)$-flag $\nabla_{r+1}$ -- in a very strong ``step-by-step'' sense.





\subsection{Wound Kummer flags}
\begin{defi}(wound flag) \label{defiwound}\\
    A complete $(\Gamma,r)$-flag $\nabla = (V_{i,r})$ is said to be \emph{wound} (\emph{ployé} in French), if for all $1 \leq i \leq d-1$, the extension of $(\Gamma,1)$-modules
\[0 \to L_{i,1} \to P_{i,1} := V_{i+1,1}/V_{i-1,1} \to L_{i+1,1} \to 0\]
does not split.
\end{defi}
\begin{rem}
The flag $\nabla$ is wound if and only if its mod $p$ reduction is wound, as a complete $(\Gamma,1)$-flag.
\end{rem}

\begin{lem}\label{remwoundunique}

    A   complete $(\Gamma,1)$-flag $\nabla = (V_{i,1})$ is  wound, if and only if it is the only complete flag with which the $(\Gamma,1)$-bundle $V_{d,1}$ can be equipped. 
\end{lem}

\begin{dem}
If $\nabla$ is not wound, there exists an $i$ such that the extension of Definition \ref{defiwound} splits. Exchanging  factors of the grading $ V_{i+1,1}/V_{i-1,1} \simeq  L_{i,1} \oplus L_{i+1,1} $ then gives another complete flag on $V_{d,1}$. Conversely, assume that $\nabla$ is wound. Let us prove, by induction on $d$, that $L_{1,1}$ is the only fixed point of the natural $\Gamma$-action on the  finite projective space (of $\F_p$-lines in $V_{d,1}$) $\P(V_{d,1})$.  Let $L \subset V_{d,1}$ be a $\Gamma$-invariant line. The quotient flag $\nabla/V_{1,1}$ (on $V_{d,1}/V_{1,1}$)  is wound, so that by induction one gets $L \subset V_{2,1}.$ If $L \neq V_{1,1}$, then $L$ would provide a splitting of the surjection $V_{2,1} \to L_{2,1}$, contradicting the fact that $\nabla$ is wound. Therefore, $L= V_{1,1}$ and the induction step is complete. Uniqueness of $\nabla$ then follows, again by induction on $d$. Indeed, let $\nabla'=(V_{i,1}')$ be another complete flag on $V_{d,1}$. Then $V_{1,1}'=V_{1,1}$, by what precedes. By induction, one then gets $\nabla/V_{1,1}=\nabla'/V_{1,1}$, so that $\nabla=\nabla'$, as desired.
\end{dem}
\begin{defi}(wound Kummer flag)\label{defwoundk} \\
Let $\nabla_{d,r}$ be a wound complete $(\Gamma,r)$-flag. We say that $\nabla_{d,r}$ is a wound Kummer flag if there exists $N \in \Z$, such that, for all $i=1,\ldots,d$, 
\begin{equation}\label{woundeq} L_{i, r} \simeq \W_r(L_{i,1}(i))(N-i). \end{equation}
    
\end{defi}
\begin{rem}
    Assume that $r=1$. Taking $N=0$ in definition above, one sees that $\W_1(L_{i,1}(i))(-i)=L_{i,1}(i)(-i)=L_{i,1}$. \\Hence, every wound  $(\Gamma,1)$-flag is wound Kummer.
\end{rem}
\begin{rem}
    The integer $N$ in the definition above is secondary. Its purpose is to make the notion of a Kummer flag invariant  by  ``global'' (in the sense of independent of $i$) cyclotomic twists, and by dualizing (Exercise below). 
\end{rem}
\begin{exo}
    Let $\nabla_{d,r}=(V_{i,r})$ be a wound Kummer $(\Gamma,r)$-flag. Show that  its dual flag $\nabla_{d,r}^\vee=(V_{d+1-i,r}^\vee)$ is a wound Kummer $(\Gamma,r)$-flag, as well.
\end{exo}
\begin{rem}
The last line of Definition above essentially says that $L_{i,r}=L_{i+1,r}(1)$, after restriction to a subgroup of $\Gamma$ of prime-to-$p$ index and up to a global "cyclotomic" twist. Here are details. Denote by $\chi_L: \Gamma \to (\Z/p^r)^\times$ the character associated to   a one-dimensional $(\Gamma,r)$-bundle $L$. Recall that $ (\Z/p^r)^\times = \F_p^\times \times U_r$, where $U_r=(1+p\Z/p^r \Z)$. [Note that $U_r= \Z/2 \times (\Z/2^{r-2}) $ if $p=2$, and $U_r= (\Z/p^{r-1}) $ if $p > 2$.] Denote by  $$\chi'_L: \Gamma \to U_r$$ the second component of $\chi_L$, with respect to this decomposition. \\Then $\chi'_L=1$, if and only if $L=\W_r(L/p)$ (i.e. $L$ is the Teichm\"uller lift of its mod $p$ reduction). We thus see that $L_{i, r} \simeq \W_r(L_{i,1}(i))(-i)$, if and only if $\chi'_{L_{i,r}} = \chi'_{\Z / p^r (-i)}$. Equivalently, there exists a finite extension $E/F$, of degree prime to $p$, such that  $L_{i, r}\simeq \Z/p^r(-i)$, as $(\Gamma_E,r)$-bundles.\\

\end{rem}

\begin{ex}
Denote by $$\chi(1): \Gamma \to \Z_p^\times$$  the character associated to $\Z_p(1)$. As a triangular representation, a $3$-dimensional wound Kummer $(\Gamma,r)$-flag reads as
\[ \Gamma \xrightarrow{\rho} \begin{pmatrix}
 \chi(-2)\cdot\W_r(\ovl\chi(2)\vareps_2) & \alpha_1 & \alpha_3 \\ 0 & \chi(-1)\cdot\W_r(\ovl\chi(1)\vareps_1) & \alpha_2 \\ 0 & 0 & \W_r(\vareps_0)
 \end{pmatrix} \in \mathbf B_3(\Z/p^r) \]
for  characters $ \vareps_i\colon\Gamma\to\F_p^\times$,  and suitable functions $\alpha_i : \Gamma \to \Z/p^r$, such that both induced mod $p$ representations   \[ \Gamma \to \begin{pmatrix}
\vareps_2 &\ovl  \alpha_1  \\ 0 & \vareps_1 
 \end{pmatrix}  \in \mathbf B_2(\F_p) \] and \[\Gamma \to \begin{pmatrix}
\vareps_1  & \ovl \alpha_2  \\ 0 & \vareps_0
 \end{pmatrix}\in \mathbf B_2(\F_p)  \]  correspond to non-split line bundle extensions. In case $\vareps_i=1$ for $i=0,1,2$, this means that  $\ovl  \alpha_1 $ and $\ovl  \alpha_2 $ are non-zero characters $\Gamma \to (\F_p,+)$.
\end{ex}

The next Proposition states that wound  Kummer flags can be lifted, in a very strong sense. This means they can be lifted step-by-step, regarding both torsion (i.e. lifting from mod $p^r$ to mod $p^{r+1}$) and dimension (i.e. extending a lifting of a truncation of the flag, to a lifting of the whole flag).
\begin{prop}(step-by-step liftability of wound Kummer flags) \label{propploye}\\
Let $\nabla_{d,r}$ be a wound  Kummer $(\Gamma,r)$-flag.\\
 Assume given a wound Kummer flag $\nabla^{\flat}_{d-1,r+1}=(V^{\flat}_{i,r+1})_{1 \leq i \leq d-1}$, together with an isomorphism $\nabla_{d-1,r} \stackrel \phi \cong \nabla^{\flat}_{d-1,r}$. Then, there exists a lift  of $\nabla_{d,r}$, to a wound Kummer flag $\nabla_{d,r+1}$, and an isomorphism $\nabla_{d-1,r+1} \cong \nabla^{\flat}_{d-1,r+1}$, whose reduction mod $p^r$ equals $\phi$.

\end{prop}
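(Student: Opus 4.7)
The plan is to prove the statement by induction on the dimension $d$. The base case $d=1$ is immediate: since $\nabla^\flat_{0,r+1}$ is empty, take $V_{1,r+1} := \W_{r+1}(L_{1,1}(1))(N-1)$, the natural Teichmüller--Tate lift. For the inductive step with $d \geq 2$, I fix the Kummer lift of the top line bundle
\[L_{d,r+1} := \W_{r+1}(L_{d,1}(d))(N-d)\]
(using the parameter $N$ of $\nabla^\flat_{d-1,r+1}$, which is compatible with that of $\nabla_{d,r}$ modulo $p^r$ by the isomorphism $\phi$). After identifying $V_{d-1,r+1}$ with $V^\flat_{d-1,r+1}$, constructing $\nabla_{d,r+1}$ amounts to lifting the extension
\[\mathcal E_r: \; 0 \to V_{d-1,r} \to V_{d,r} \to L_{d,r} \to 0\]
to an extension of $(\Gamma,r+1)$-bundles $0 \to V^\flat_{d-1,r+1} \to V_{d,r+1} \to L_{d,r+1} \to 0$ reducing mod $p^r$ to $\mathcal E_r$. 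The short exact sequence of $(\Gamma,r+1)$-modules
\[0 \to V_{d-1,1} \otimes L_{d,1}^\vee \to V^\flat_{d-1,r+1} \otimes L_{d,r+1}^\vee \to V_{d-1,r} \otimes L_{d,r}^\vee \to 0\]
identifies the obstruction as $\delta([\mathcal E_r])$, where $\delta$ is the associated connecting map $H^1 \to H^2$. Once $\delta([\mathcal E_r])=0$ is shown, the produced flag is wound (a mod-$p$ condition inherited from $\nabla_{d,r}$) and wound Kummer (by the choice of $L_{d,r+1}$ together with the Kummer property of $\nabla^\flat$).

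To prove $\delta([\mathcal E_r])=0$, I combine the wound and Kummer/W90 hypotheses. First, for each $1 \leq i \leq d-2$, the extension $0 \to L_{i,1} \to V_{d-1/(i-1),1} \to V_{d-1/i,1} \to 0$ is non-split, since any would-be section would restrict to a section of the non-split $P_{i,1}$. Tensoring with $L_{d,1}^\vee$ preserves non-splitness, so Lemma \ref{LemManaLift} (with $L=L_{i,1}\otimes L_{d,1}^\vee$ invertible) gives at each level, through the long exact sequence, an injection $H^2(\Gamma, V_{d-1/(i-1),1} \otimes L_{d,1}^\vee) \hookrightarrow H^2(\Gamma, V_{d-1/i,1} \otimes L_{d,1}^\vee)$. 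Composing, the top projection $\pi\colon V_{d-1,1} \to L_{d-1,1}$ induces an injection
\[\pi_*\colon \; H^2(\Gamma, V_{d-1,1} \otimes L_{d,1}^\vee) \; \hookrightarrow \; H^2(\Gamma, L_{d-1,1} \otimes L_{d,1}^\vee).\]
Second, a direct computation from the Kummer formulas, using multiplicativity of the Teichmüller lift, yields
\[L_{d-1,r+1} \otimes L_{d,r+1}^\vee \;\cong\; \W_{r+1}\bigl(L_{d-1,1} \otimes L_{d,1}^\vee \otimes \F_p(-1)\bigr) \otimes \Z/p^{r+1}(1),\]
i.e. a Teichmüller lift twisted by exactly one copy of $\Z/p^{r+1}(1)$. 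Lemma \ref{lem Kummer surj} then applies and yields surjectivity of $H^1(\Gamma, L_{d-1,r+1} \otimes L_{d,r+1}^\vee) \to H^1(\Gamma, L_{d-1,r} \otimes L_{d,r}^\vee)$, equivalently the vanishing of the corresponding connecting map $\delta_{\mathrm{top}}$. By naturality of $\delta$ for $\pi$, $\pi_*(\delta([\mathcal E_r])) = \delta_{\mathrm{top}}(\pi_*([\mathcal E_r])) = 0$, and injectivity of $\pi_*$ forces $\delta([\mathcal E_r])=0$.

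The hard part is the wound devissage producing injectivity of $\pi_*$: at each step one must verify that the quotient extension in question is non-split and that the injected term is actually a line bundle, so that Lemma \ref{LemManaLift} applies. Once this is in place, the Kummer/W90 input fits precisely, because the Kummer definition was designed so that ratios of consecutive line bundles exhibit exactly one Tate twist, matching the hypothesis of Lemma \ref{lem Kummer surj}. Finally, the compatibility of the produced isomorphism $\nabla_{d-1,r+1} \cong \nabla^\flat_{d-1,r+1}$ with the given $\phi$ is automatic, since the embedding $V^\flat_{d-1,r+1} \hookrightarrow V_{d,r+1}$ reduces mod $p^r$ by construction.
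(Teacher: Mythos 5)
Your proof is correct, but it takes a genuinely different route from the paper's. The paper uses the gluifting formalism of Section \ref{Sectiongluing}: it invokes the inductive hypothesis on the \emph{quotient} flag $\nabla_{d/1,r}$ to produce a wound Kummer lift $\nabla^\sharp_{d-1,r+1}$ compatible with $\nabla^\flat_{d-1/1,r+1}$, then studies the gluifting obstruction $c_1\in H^2(\Gamma, L_{d,1}^\vee\otimes L_{1,1})$ for gluing $\nabla^\flat$ and $\nabla^\sharp$ along $\nabla_{d,r}$; if $c_1\ne 0$ it adjusts $\nabla^\sharp$ by a central twist $\iota_*(\epsilon^\sharp)$, with $\epsilon^\sharp$ produced by a single application of Lemma \ref{LemManaLift} to the bottom wound extension $P_{1,1}$ (twisted by $L_{d,1}^\vee$). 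You instead work directly with the top extension $\mathcal E_r: 0\to V_{d-1,r}\to V_{d,r}\to L_{d,r}\to 0$ and show its lifting obstruction $\delta([\mathcal E_r])\in H^2(\Gamma, V_{d-1,1}\otimes L_{d,1}^\vee)$ vanishes, applying Lemma \ref{LemManaLift} to \emph{each} of the wound non-split pieces $P_{i,1}$, $1\le i\le d-2$, to get the injection $\pi_*: H^2(\Gamma, V_{d-1,1}\otimes L_{d,1}^\vee)\hookrightarrow H^2(\Gamma, L_{d-1,1}\otimes L_{d,1}^\vee)$, and then killing the image with the W90/Kummer observation that $L_{d-1,r+1}\otimes L_{d,r+1}^\vee$ is a Teichm\"uller lift twisted by a single Tate twist, so that Lemma \ref{lem Kummer surj} makes $\delta_{\mathrm{top}}$ identically zero. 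Both arguments are sound. What yours buys: it avoids the gluifting machinery entirely, uses the inductive hypothesis only through the given $\nabla^\flat_{d-1,r+1}$ (not via a lift of the quotient flag), and handles $d=2$ as a degenerate case of the devissage rather than a separate base case. What the paper's approach buys: the gluifting framework is reused almost verbatim in the proof of Theorem \ref{ThmLiftK}, where the flag is no longer wound and one cannot always kill the obstruction by a central adjustment along $P_{1,1}$ alone; there the more flexible gluifting/adjustment mechanism is needed, so the paper sets it up once and applies it in both places. Two small points you should make explicit if you write this up: (i) checking that the parameter $N^\flat$ of $\nabla^\flat_{d-1,r+1}$ gives a valid choice for $L_{d,r+1}$ reducing to $L_{d,r}$ (this follows, as you note, from $\phi$ identifying the graded pieces of $\nabla_{d-1,r}$ and $\nabla^\flat_{d-1,r}$), and (ii) the commutativity, via $\pi$, of the reduction sequences of $V^\flat_{d-1,r+1}\otimes L_{d,r+1}^\vee$ and of $L^\flat_{d-1,r+1}\otimes L_{d,r+1}^\vee$, which is what underlies $\pi_*\circ\delta=\delta_{\mathrm{top}}\circ\pi_*$.
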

Before giving the proof, here is a corollary,  straightforward by induction on $d$.
\begin{coro}\label{CoroLiftWound}
Assume that $\nabla_{d,r}$ is a wound Kummer $(\Gamma,r)$-flag. Then it admits a lift to a wound Kummer $(\Gamma, r+1)$-flag.
\end{coro}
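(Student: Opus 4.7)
The plan is to prove the corollary by induction on $d$, feeding the stronger step-by-step statement of Proposition \ref{propploye} at each stage.

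For the base case $d=1$, the flag $\nabla_{1,r}$ is just a single invertible $(\Gamma,r)$-bundle $L_{1,r}$. By Definition \ref{defwoundk}, there is some $N \in \Z$ such that $L_{1,r} \simeq \W_r(L_{1,1}(1))(N-1)$. The natural candidate for the lift is then the one-dimensional flag given by
\[ L_{1,r+1} := \W_{r+1}(L_{1,1}(1))(N-1), \]
whose reduction mod $p^r$ is isomorphic to $L_{1,r}$. The wound condition is vacuous in dimension $1$, and the Kummer relation \eqref{woundeq} holds by construction with the same $N$.

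For the inductive step, assume the statement has been proved in dimension $d-1$. Let $\nabla_{d,r}$ be a wound Kummer $(\Gamma,r)$-flag. Its truncation $\nabla_{d-1,r}$ is again a wound Kummer $(\Gamma,r)$-flag: the non-split extension condition holds for $1\le i\le d-2$ since it held on the larger range $1\le i\le d-1$, and the Kummer relation \eqref{woundeq} is inherited verbatim, with the same integer $N$, for the indices $i=1,\dots,d-1$. The inductive hypothesis therefore supplies a wound Kummer lift $\nabla^{\flat}_{d-1,r+1}$ of $\nabla_{d-1,r}$; let $\phi\colon\nabla_{d-1,r}\stackrel{\sim}{\to}(\nabla^{\flat}_{d-1,r+1})\otimes_{\Z/p^{r+1}}\Z/p^r$ be any choice of identifying isomorphism (for instance the canonical one coming from the construction of the lift).

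We can now directly apply Proposition \ref{propploye} to the triple $(\nabla_{d,r},\nabla^{\flat}_{d-1,r+1},\phi)$: it produces a wound Kummer $(\Gamma,r+1)$-flag $\nabla_{d,r+1}$ that lifts $\nabla_{d,r}$, which is exactly the conclusion we need. No new obstacle arises at this level, since all of the real work (the vanishing of the gluifting obstruction of Section \ref{Sectiongluing}, controlled via Lemma \ref{LemManaLift} and the W90 property) has already been carried out in Proposition \ref{propploye}; the corollary is a clean formal consequence, obtained by using the proposition both to do the dimensional induction and to match the torsion level.
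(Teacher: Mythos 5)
Your proof is correct and is precisely the induction on $d$ that the paper declares "straightforward" just before stating the corollary: truncate to obtain the wound Kummer $(\Gamma,r)$-flag $\nabla_{d-1,r}$, lift it by the inductive hypothesis, then invoke Proposition \ref{propploye} to extend that lift to a wound Kummer lift of $\nabla_{d,r}$. The base case via the Teichmüller lift $\W_{r+1}(L_{1,1}(1))(N-1)$ matches the $d=1$ case handled in the proof of Proposition \ref{propploye}.
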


\begin{proof}

We prove the Proposition, by induction on $d = \dim(\nabla_{d,r})$.
If $d=1$, there is nothing to prove, since $\W_{r+1}(L_{1,1})$ is a lift of $\W_r(L_{1,1})$.  \\
If $d=2$, then $\mathcal{V}_{1,r+1}$ consists of the single piece $L_{1,r+1}=\W_{r+1}(L_{1,1}(1))(-1)$. Our job is to lift $[V_{2,r}] \in \Ext^1_\Gamma(L_{2,r}, L_{1,r})$ to a class in $ \Ext^1_\Gamma(L_{2,r+1}, L_{1,r+1})$. Note that, for all $s\geq 0$, $$\Ext^1_\Gamma(L_{2,s}, L_{1, s})=H^1(\Gamma,(L_{2,s}^\vee \otimes L_{1, s})=H^1(\Gamma,\W_s(L_{2,1}^\vee \otimes L_{1,1})(1)).$$ The map $\Ext^1_\Gamma(L_{2,r+1}, L_{1, r+1}) \to \Ext^1_\Gamma(L_{2,r}, L_{1, r})$ can thus be identified to $$H^1(\Gamma, \W_{r+1}(L_{2,1}^\vee \otimes L_{1,1})(1)) \to H^1(\Gamma, \W_{r}(L_{2,1}^\vee \otimes L_{1,1})(1)),$$ which is surjective by Lemma \ref{lem Kummer surj} (consequence of the (wH90) property), concluding the proof.\\
   It remains to treat the case $d > 2$, assuming that the Proposition holds for all flags of dimension $<d$. We introduce the $(d-1)$-dimensional wound Kummer flag $$\nabla_{d/1,r}:= \nabla_{d,r}/L_{1,r}: 0 \subset V_{2/1,r} \subset  V_{3/1,r} \subset \ldots \subset  V_{d/1,r}.$$ \\ Similarly, we introduce the $(d-2)$-dimensional wound Kummer flag $$\nabla^{\flat}_{d-1/1,r+1}:= \nabla^{\flat}_{d-1,r+1}/L_{1,r+1}: 0 \subset V^\flat_{2/1,r+1} \subset  V^{\flat}_{3/1,r+1} \subset \ldots \subset  V^{\flat}_{d-1/1,r+1} \, .$$
   By the induction hypothesis, there exists a wound Kummer lift $\nabla^{\sharp}_{d-1,r+1}$  of $\nabla_{d/1, r}$, compatible with $\nabla^{\flat}_{d-1/1, r+1}$. The obstruction to gluift $\nabla^{\flat}_{d/1,r+1}$ and $\nabla^{\sharp}_{d-1,r+1}$, along  $\nabla_{d,r}$, to a flag $\nabla_{d,r+1}$, is a class $$c_1 \in H^2(\Gamma, L_{d,1}^\vee \otimes L_{1,1})$$ (see Definition \ref{defigluift}, and the discussion thereafter). If $c_1=0$, then  gluifting can be done, and $\nabla_{d,r+1}$ is automatically a wound Kummer flag, compatible with  $\nabla^{\flat}_{d-1,r+1}$ (i.e. with $\phi$). 
    
   Assume that $c_1 \neq 0$. We are going to modify (=adjust) $\nabla^{\sharp}_{d-1,r+1}$, so that gluifting becomes possible. 
    Since $\nabla_{d,r}$ is wound, the extension 
    \[0 \to L_{1, 1} \to V_{2,1} \to L_{2, 1} \to 0\]
    does not split, nor its twist by $L_{d,1}^\vee$. Denote its class by $$p_1 \in \Ext^1_\Gamma(L_{2,1}, L_{1, 1}) \simeq H^1(\Gamma, L_{2,1}^\vee \otimes L_{1, 1}).$$ Since $p_1\neq 0$, Lemma \ref{LemManaLift} implies that there exists a class $$\epsilon^{\sharp} \in \Ext^1_\Gamma(L_{d,1}, L_{2, 1}) = H^1(\Gamma, L_{d,1}^\vee \otimes L_{2,1}),$$ such that $p_1 \cup\epsilon^{\sharp} = c_1$. 
       
    There are natural $\Gamma$-equivariant injections $$\iota: L_{d,1}^\vee \otimes L_{2, 1} \hookrightarrow \End(\nabla^{\sharp }_{d-1,1}) \hookrightarrow \Aut(\nabla^{\sharp }_{d-1,r+1}) ,$$ where the second one is given by the formula $$ f \mapsto  \Id+p^r f.$$ It is a disguise of the  embedding of triangular  subgroups of $\GL_{d-1}(\Z/p^{r+1})$, \[ \begin{pmatrix}
 1 & 0 & 0  & p^r \ast  \\ 0 & 1 & 0 & 0 \\ 0 & 0 &1 & 0 \\ 0 & 0 & 0 & 1

 \end{pmatrix} \subset  \begin{pmatrix}
  \times  & \ast  & \ast & \ast  \\  0 &  \times  & \ast & \ast   \\ 0 & 0 &  \times & \ast \\0 & 0 &0 &  \times
 \end{pmatrix}.\]
 Since $\iota( L_{d,1}^\vee \otimes L_{2, 1})$ is \textit{central} in $\Aut(\nabla^{\sharp }_{d-1,r+1})$,  we can define 
the flag $$\nabla^{\sharp \mathrm{ad} }_{d-1,r+1}:= \nabla^{\sharp}_{d-1,r+1}- \iota_*(\epsilon^{\sharp}).$$ It is a new lift of $\nabla^{\sharp}_{d-1,r}$, arising as a very small (actually as small as possible) deformation of $\nabla^{\sharp}_{d-1,r+1}$. Denote by  $$c_1^{\mathrm{ad}} \in H^2(\Gamma, L_{d,1}^\vee \otimes L_{1,1})$$   the obstruction to glue $\nabla^{\flat}_{d-1,r+1}$ and $\nabla^{\sharp \mathrm{ad}}_{d-1,r+1}$, to a lift $\nabla_{d,r+1}$ of $\nabla_{d,r}$. Using naturality of cup-product, one computes: $$c_1^{\mathrm{ad}}=c_1-p_1 \cup \epsilon^{\sharp}=0. $$
    Therefore, $\nabla^{\sharp \mathrm{ad} }_{d-1,r+1}$ and $\nabla^{\flat}_{d/1,r+1}$ glue, to the desired $\nabla_{d,r+1}$.
\end{proof}

\subsection{Generalisation to a larger class of coefficients}\label{SectGen}
Let $k$ be a (not necessarily perfect) field of characteristic $p$. Then, Proposition \ref{propploye}, as well as all results so far, hold for representations over  the ring of Witt vectors $\W_r(k)$, in place of $\Z/p^r(=\W_r(\F_p))$. Indeed,  $\F_p$-linearity can everywhere be  upgraded to $k$-linearity. For instance, Lemma \ref{LemManaLift} generalizes to finite-dimensional representations of $\Gamma$ over $k$ (in place of $\F_p$). The proof is the same.

\subsection{Kummer flags, in the presence of enough  roots of unity}

In this section, we assume that $\Z/p^{r+1}(1)\simeq \Z/p^{r+1}$. This assumption is satisfied in the topological case. In the arithmetic case where $\Gamma = \Gamma_F$ for a local field $F$, it is equivalent to assuming that $F$ contains a primitive $p^{r+1}$-th root of unity.

\begin{defi}
Let $\nabla_{d,r}$ be a complete $(\Gamma,r)$-flag.

For all $1 \leq k \leq d$, define $i_r(k)$ to be the smallest integer $0 \leq i \leq k-1$ such that the extension of $(\Gamma,r)$-modules
\[0 \to V_{k-1,r}/V_{i,r} \to V_{k,r} / V_{i,r} \to L_{k,r} \to 0\]
splits.
\end{defi}

To state a general lifting theorem, we need the following notion of Kummer flag, defined by induction. Recall that to any flag $\nabla_{d,r}$ of rank $d$, we can attach two flags: its truncation $\nabla_{d-1,r}$, and its quotient  $\nabla_{d/1,r}:= \nabla_{d,r}/V_{1,r}$.
\begin{defi} \label{def kummer}
A complete $(\Gamma,r)$-flag $\nabla_{d,r}$ is a Kummer flag if the following conditions hold.
\begin{enumerate}
    \item{For all $1 \leq k \leq d$, $i_r(k)=i_1(k)$. } 
    \item{For all $k=1, \ldots, d$, $L_{k,r} = \Z/p^r$. In other words, all one-dimensional graded pieces  are  trivial.}
    \item{$\nabla_{d-1,r}$ and $\nabla_{d/1,r}$ are Kummer.}
    \item For all $2 \leq k \leq d$, if $i_r(k) = 0$, then for any splitting $s : L_{k,r} \to V_{k,r}$, the flag $\nabla^{q,s}_{d-1,r} := \nabla_{d,r}/s(L_{k,r})$ is Kummer.

\end{enumerate}
\end{defi}

Several remarks are in order, to illustrate this definition.

\begin{rem} \label{rem kummer general}
 Consider the condition:
  \begin{enumerate}[label=(\roman*)]
   \setcounter{enumi}{1}
      \item { For all $k=1, \ldots, d$, $L_{k,r} =\W_r(L_{k,1})$.}
  \end{enumerate}

    It is less restrictive than Condition (2) above. The main results, Theorem \ref{ThmLiftK} and its corollary, remain valid if (2) is replaced by (ii), in Definition  \ref{def kummer}. Since  this  makes no significant difference,  we have chosen to work with (2).
\end{rem}
\begin{rem}
If $d=1$, a flag $\nabla_{1,r}(=L_{1,r})$ is Kummer if and only if it is attached to the trivial character $\Gamma\to(\Z/p^r)^\times$. \\ If $d=2$, a flag $\nabla_{2,r}$ is Kummer if and only if it is attached to an extension of the trivial character by itself that is either split, or already non-split  modulo $p$.
\end{rem}

\begin{rem}\label{RemKum1}
If $r=1$, then $\nabla_{d,1}$ is a Kummer flag if and only if all of its graded pieces are trivial.
\end{rem}

\begin{rem}
    Condition (1) in Definition \ref{def kummer} is equivalent to:

     \begin{enumerate}[label=(\roman*)]

      \item {  Consider an extension of  $(\Gamma,r)$-modules of the shape
\[(E_r): 0 \to V_{j/i,r} \to V_{k/i,r} \to V_{k/j,r} \to 0,\] for some integers $0 \leq i \leq j \leq k \leq d.$ If $(E_1)$ splits, then  $(E_r)$ splits.}
  \end{enumerate}
Actually, in combination with conditions (3) and (4),  it would suffice to demand (i) for $j=i+1=k-1$. [Thus, as stated, the formulation of the Definition is slightly redundant. It is nonetheless convenient in practice.] \\
Checking these facts is left to the interested reader.
\end{rem}

\begin{rem}
A wound Kummer $(\Gamma,r)$-flag (in the sense of Definition \ref{defwoundk})
 is a Kummer $(\Gamma,r)$-flag  (in the sense of Definition \ref{def kummer}), if and only if $L_{i,1}$ is trivial for every $i=1,\ldots,d$. Indeed, the wound condition implies that $i_r(k)=i_1(k)=k-1$ for every $k=1,\ldots,d$, and the fact that $L_{i,1}$ is trivial and $\Z/p^r\cong\Z/p^r(1)$ as $(\Gamma,r)$-modules implies that $L_{i,r}$ is trivial for every $i=1,\ldots,d$. \\ On the other hand, a Kummer $(\Gamma,r)$-flag is wound if and only if $i_r(k)=i_1(k)=k-1$ for every $k=1,\ldots,d$. One can easily construct an example of a Kummer flag not satisfying this condition, so that not all Kummer flags are wound.
\end{rem}

The following two lemmas will be useful in the proof of the main Theorem.

\begin{lem} \label{lem 1 glue Kummer}
    Let $\nabla_{d,r+1}$ be a complete $(\Gamma,r+1)$-flag. Let $2 \leq k \leq d-1$ and $s : L_{k,r+1} \to V_{k,r+1}$ be a section of $V_{k,r+1} \to L_{k,r+1}$.

    If $\nabla_{d,r}$, $\nabla_{d/1,r+1}$ and $\nabla_{d,r+1}/s(L_{k,r+1})$ are Kummer, then $\nabla_{d,r+1}$ is Kummer.
\end{lem}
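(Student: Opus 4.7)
The plan is to verify the four conditions of Definition \ref{def kummer} for $\nabla_{d,r+1}$ by induction on $d$; since $2\leq k\leq d-1$ forces $d\geq 3$, the base case is vacuous. Throughout, set $D:=\nabla_{d,r+1}$, $A:=\nabla_{d,r}$, $B:=\nabla_{d/1,r+1}$ and $C:=\nabla_{d,r+1}/s(L_{k,r+1})$.

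Conditions (2) and (3) are handled first. Triviality of the graded pieces (condition (2)) is immediate: $L_{i,r+1}$ for $i\geq 2$ appears in $B$ and $L_{1,r+1}$ appears in $C$. For condition (3), $\nabla_{d/1,r+1}=B$ is Kummer by hypothesis. For the truncation $\nabla_{d-1,r+1}$, I would invoke the inductive hypothesis of the very lemma: when $k\leq d-2$, the reduction mod $p^r$, the quotient by $V_{1,r+1}$, and the quotient by $s(L_{k,r+1})$ of $\nabla_{d-1,r+1}$ are respectively the truncations of $A$, $B$, and $C$, all Kummer by condition (3) of these flags. The edge case $k=d-1$ is treated separately by using the direct-sum decomposition $V_{d-1,r+1}\simeq V_{d-2,r+1}\oplus s(L_{d-1,r+1})$ to express $\nabla_{d-1,r+1}$ in terms of the Kummer flag $C$ and the trivial top piece $L_{d-1,r+1}$.

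Conditions (1) and (4) constitute the main content. For each index $k'$, the required splittings come from one of three sources by case analysis: if the relevant sub-extension already sits in $B$ (i.e.\ when $i_1(k')\geq 1$), invoke $B$ being Kummer; if $k'=k$, the section $s$ itself provides the splitting; otherwise ($i_1(k')=0$ and $k'\neq k$), push the sub-extension forward along $D\to C$ to obtain a mod-$s$ splitting from $C$ being Kummer, then lift it back to a splitting in $D$. The lifting uses a fibre-product description of $V_{k',r+1}$ in terms of $V_{k',r}$ and $V_{k',r+1}/s(L_{k,r+1})$, combined with the mod-$p^r$ splitting supplied by $A$ being Kummer. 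Condition (4) for $D$ then follows by induction, once one checks that each quotient $D/s'(L_{k',r+1})$ inherits the three Kummer hypotheses of the lemma.

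The main obstacle will be the equivariant patching step in the last case: ensuring that the mod-$p^r$ section (from $A$) and the mod-$s$ section (from $C$) can be made to agree on the common quotient $V_{k',r}/\overline{s}$, so as to produce a $\Gamma$-equivariant section of the original extension. This reduces to lifting a $(\Gamma,r)$-equivariant difference map to the $(r+1)$-st level, and should be handled by invoking condition (4) of $C$ to supply further Kummer substructure, together with the triviality of the graded pieces established in the first step.
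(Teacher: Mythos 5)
Your overall plan (induction on $d$, verifying the four conditions of Definition \ref{def kummer}) is the right one, and your treatment of conditions (2) and (3) is correct and even slightly more explicit than the paper's: the paper leaves the $k=d-1$ edge case implicit, whereas you correctly observe that $\nabla_{d-1,r+1}$ splits as $\nabla_{d-2,r+1}\oplus s(L_{d-1,r+1})$ with $\nabla_{d-2,r+1}$ the truncation of the Kummer flag $C$.

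The gap is in your handling of condition (1). You miss the structural simplification the paper uses: once condition (3) is established, $\nabla_{d-1,r+1}$ is Kummer, and since the integers $i_{r+1}(k')$ for $k'\leq d-1$ depend only on the truncation $\nabla_{d-1,r+1}$, condition (1) is automatic for $k'<d$. Only $k'=d$ requires a direct argument, and there the paper splits into three sub-cases by the value of $i_1(d)$: for $i_1(d)\geq 2$, the splitting position is entirely visible in $B=\nabla_{d/1,r+1}$ together with the fact that $V_{d,r+1}\to L_{d,r+1}$ cannot split (its mod $p^r$ reduction does not split, by $A$); for $i_1(d)=1$ the same reasoning applies; for $i_1(d)=0$, one combines $B$ and $C$. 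Your attempt to check condition (1) by directly manufacturing a splitting for every $k'$ with $i_1(k')=0$, $k'\neq k$, is not only more laborious but also runs into the obstruction you yourself flag, and which is genuine. In fact the ``fibre-product description'' you propose is wrong: the natural map $V_{k',r+1}\to V_{k',r}\times_{V_{k',r}/\overline{s}}V_{k',r+1}/s(L_{k,r+1})$ is surjective with kernel $p^r s(L_{k,r+1})\cong L_{k,1}\neq 0$, so $V_{k',r+1}$ is not that fibre product, and patching a mod $p^r$ section with a mod $s$ section only produces a section of the quotient $V_{k',r+1}/p^r s(L_{k,r+1})$, not of $V_{k',r+1}$. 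You acknowledge this as the ``main obstacle'' without resolving it, so the proposal is incomplete as stated. The correct route (and the one the paper takes, though tersely) is to reduce condition (1) to the single index $k'=d$ via condition (3), and to treat that index using $B$ (to control $V_{d/1,r+1}$) together with either $A$ (for the cases $i_1(d)\geq 1$) or $C$ (for the case $i_1(d)=0$); the corresponding analysis for condition (4) then follows by the same dimensional induction.
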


\begin{proof}
    The only non-obvious case to check condition (1) in the definition of Kummer flag for $\nabla_{d,r+1}$ is when $k=d$ and $i_1(d)=i_r(d)$ (since $\nabla_{d,r}$ is Kummer) is $0$ or $1$. Then $i_{r+1}(d) = 0$ or $1$ since $\nabla_{d/1,r+1}$ is Kummer. If $i_1(d)=i_r(d)$ is $1$, then $i_{r+1}(d) = 1$. If $i_1(d)=i_r(d)=0$, then $i_{r+1}(d) = 0$ since $\nabla_{d,r+1}/s(L_{k,r+1})$ is Kummer.

    Condition (2) in the definition is obvious.

    Condition (3) is obvious for $\nabla_{d/1,r+1}$ and follows by induction on the dimension for $\nabla_{d-1,r+1}$.

    Condition (4) holds by induction on $d$.
\end{proof}

\begin{lem} \label{lem 2 glue Kummer}
   Let $\nabla_{d,r+1}$ be a complete $(\Gamma,r+1)$-flag such that $i_1(k) \geq 1$ for all $2 \leq k \leq d-1$ and $i_1(d) \geq 2$.
   
   If $\nabla_{d/1,r+1}$ and $\nabla_{d-1,r+1}$ are Kummer, then $\nabla_{d,r+1}$ is Kummer.
\end{lem}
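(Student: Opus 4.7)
The plan is to verify directly the four conditions of Definition \ref{def kummer} for $\nabla_{d,r+1}$, exploiting the hypothesis that both $\nabla_{d-1,r+1}$ and $\nabla_{d/1,r+1}$ are Kummer, together with the inequalities $i_1(k) \geq 1$ for $2 \leq k \leq d-1$ and $i_1(d) \geq 2$.

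Condition (3) is the hypothesis itself. Condition (2) is immediate: for $1 \leq k \leq d-1$, $L_{k,r+1}$ is a graded piece of $\nabla_{d-1,r+1}$; and $L_{d,r+1}$ is the top graded piece of $\nabla_{d/1,r+1}$. Both these flags being Kummer, every $L_{k,r+1}$ is trivial.

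For condition (1), the case $k \leq d-1$ is immediate, since $i_{r+1}(k)$ is determined entirely by the sub-flag $\nabla_{d-1,r+1}$, which is Kummer. The case $k = d$ is the only actual computation. For $1 \leq j \leq d-1$, the extension
\[0 \to V_{d-1,r+1}/V_{j,r+1} \to V_{d,r+1}/V_{j,r+1} \to L_{d,r+1} \to 0\]
is of the shape $(E_{r+1})$ appearing in the equivalent formulation of condition (1) given just after Definition \ref{def kummer}, applied to the Kummer flag $\nabla_{d/1,r+1}$ with its top two pieces. Hence it splits mod $p^{r+1}$ if and only if it splits mod $p$. On the other hand, the case $j = 0$ (i.e.\ splitting of $V_{d,r+1}$ itself) is ruled out mod $p^{r+1}$, since such a splitting would reduce to one mod $p$, forcing $i_1(d) = 0$ and contradicting $i_1(d) \geq 2$. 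Putting these two observations together gives $i_{r+1}(d) = i_1(d)$.

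Finally, condition (4) is vacuous: by the previous step, $i_{r+1}(k) = i_1(k) \geq 1$ for every $2 \leq k \leq d$, so the premise $i_{r+1}(k) = 0$ never holds. The only mildly delicate point is the bookkeeping identifying the relevant extension inside $\nabla_{d/1,r+1}$ in the $k = d$ case of condition (1); everything else is essentially formal.
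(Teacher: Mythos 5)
Your proof is correct and follows the same line of argument as the paper's (which is stated almost without detail): conditions (2) and (3) are immediate from the hypotheses, condition (1) for $k\le d-1$ reduces to $\nabla_{d-1,r+1}$, condition (1) for $k=d$ is checked by splitting into the case $j=0$ (ruled out by $i_1(d)\ge 2$) and $j\ge 1$ (where the extension lies inside the Kummer flag $\nabla_{d/1,r+1}$), and condition (4) is vacuous since all $i_{r+1}(k)\ge 1$. You simply make explicit the bookkeeping that the paper leaves to the reader.
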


\begin{proof}
    Condition (1) is obvious for all $k \leq d-1$ since $\nabla_{d-1,r+1}$ is Kummer, and for $k=n$ since $i_1(d) \geq 2$ and $\nabla_{d/1,r+1}$ is Kummer.

    Conditions (2) and (3) are obvious.

    Condition (4) is clear because of the assumption on the $i_1(k)$'s.
\end{proof}

We can now state and prove the main result of this section.

\begin{thm}\label{ThmLiftK}
Assume that $\Z/p^{r+1}(1)\simeq \Z/p^{r+1}$.

Let $\nabla_{d,r}$ be a complete Kummer $(\Gamma,r)$-flag.
\begin{itemize}[leftmargin=*]
    \item Consider a Kummer flag $\nabla^\sharp_{d-1,r+1}$, together with an isomorphism $\nabla_{d/1,r} \stackrel {\phi_r} \cong \nabla^\sharp_{d-1,r}$. Then, there exists a Kummer lift $\nabla_{d,r+1}$ of $\nabla_{d,r}$, together with an isomorphism $\nabla_{d/1,r+1}  \stackrel {\phi_{r+1}} \cong \nabla^\sharp_{d-1,r+1}$, which lifts $\phi_r$.
    \item Consider a Kummer flag $\nabla^\flat_{d-1,r+1}$, together with an isomorphism $\nabla_{d-1,r} \stackrel {\phi_r} \cong \nabla^\flat_{d-1,r}$. Then, there exists a Kummer lift $\nabla_{d,r+1}$ of $\nabla_{d,r}$, together with an isomorphism $\nabla_{d-1,r+1}  \stackrel {\phi_{r+1}} \cong \nabla^\flat_{d-1,r+1}$, which lifts $\phi_r$.
\end{itemize}

In particular, $\nabla_{d,r}$ lifts to a complete Kummer $(\Gamma, r+1)$-flag $\nabla_{d,r+1}$. 
\end{thm}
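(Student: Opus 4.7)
I plan to establish both bullets by simultaneous induction on $d$, the existence statement following by iterating either one. The base case $d=1$ is immediate: the Kummer condition forces $L_{1,r+1} = \Z/p^{r+1}$, which canonically lifts $L_{1,r}$, and any prescribed isomorphism lifts trivially. For $d \geq 2$, I focus on the second bullet; the first is analogous, with the roles of truncation and quotient exchanged (and invoking the first bullet at dimension $d-1$ in place of the second).

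Following the template of Proposition \ref{propploye}, given the prescribed truncation lift $\nabla^{\flat}_{d-1,r+1}$ I first apply the inductive hypothesis (second bullet, dimension $d-1$) to the quotient flag $\nabla_{d/1,r}$, with prescribed truncation $\nabla^{\flat}_{d-1/1,r+1}$. This produces a Kummer lift $\nabla^{\sharp}_{d-1,r+1}$ of $\nabla_{d/1,r}$. The two $(d-1)$-dimensional Kummer lifts $\nabla^{\flat}_{d-1,r+1}$ and $\nabla^{\sharp}_{d-1,r+1}$ share the common $(d-2)$-dimensional Kummer sub-flag $\nabla^{\flat}_{d-1/1,r+1}$, so a gluifting (Definition \ref{defigluift}) can be attempted, with obstruction $c_1 \in H^2(\Gamma, L_{d,1}^\vee \otimes L_{1,1}) = H^2(\Gamma, \F_p)$ by Kummer condition (2).

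When $i_1(2) = 1$, the bottom extension $0 \to L_{1,1} \to V_{2,1} \to L_{2,1} \to 0$ is non-split. Lemma \ref{LemManaLift}, applied to this extension twisted by $L_{d,1}^\vee$, then furnishes a class $\epsilon^{\sharp} \in \Ext^1_\Gamma(L_{d,1}, L_{2,1})$ with $c_1 = [V_{2,1}] \cup \epsilon^{\sharp}$, and the central adjustment $\nabla^{\sharp}_{d-1,r+1} \mapsto \nabla^{\sharp}_{d-1,r+1} - \iota_*(\epsilon^{\sharp})$ kills the obstruction, exactly as in the wound case. Under the further assumptions $i_1(k) \geq 1$ for $2 \leq k \leq d-1$ and $i_1(d) \geq 2$, Lemma \ref{lem 2 glue Kummer} certifies that the resulting $\nabla_{d,r+1}$ is Kummer.

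The principal obstacle is the case where some $i_1(k) = 0$, i.e., the flag genuinely splits somewhere, so that the adjustment trick above is unavailable whenever $i_1(2)=0$, and Lemma \ref{lem 2 glue Kummer} fails whenever $i_1(d) \leq 1$. Here my plan is to exploit Kummer condition (4): fix a section $s \colon L_{k,r} \to V_{k,r}$ at such a $k$, and pass to the $(d-1)$-dimensional Kummer flag $\nabla^{q,s}_{d-1,r} = \nabla_{d,r}/s(L_{k,r})$. By Kummer condition (1) applied to $\nabla^{\flat}_{d-1,r+1}$, the section $s$ lifts to a section mod $p^{r+1}$ on the truncation side (and is trivially present on the top if $k=d$). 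Applying the induction hypothesis to $\nabla^{q,s}_{d-1,r}$ with the induced compatibility produces a Kummer lift, from which $\nabla_{d,r+1}$ is reconstructed by reattaching the trivial split summand $s(L_{k,r+1}) \cong \Z/p^{r+1}$; the Kummer property of the reconstruction is certified by Lemma \ref{lem 1 glue Kummer}. The subtle step is verifying that the compatibility isomorphism $\phi_r$ survives the reduction-reconstruction and that the chosen section threads correctly through $\nabla^{\flat}_{d-1,r+1}$, which is what ultimately forces the careful formulation of Kummer condition (4) via arbitrary sections.
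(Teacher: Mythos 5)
Your plan is close in spirit to the paper's (same case analysis by the invariants $i_1(k)$, same use of gluifting and central adjustment), but with two genuine deviations worth flagging, one cosmetic and one that opens a real gap.

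On the cosmetic side, you prove the second bullet directly and get the first one by symmetry of the argument; the paper proves the first bullet and obtains the second by duality (using that the dual of a Kummer flag is Kummer). Both are fine. Relatedly, your choice of adjustment class is the one from Proposition \ref{propploye}: you adjust the quotient flag $\nabla^\sharp$ by a corner term $\epsilon^\sharp\in\Ext^1_\Gamma(L_{d,1},L_{2,1})$ cup-matched against the bottom extension $p_1=[V_{2,1}]$. The paper, working on the first bullet, instead adjusts $\nabla^\flat$ by a class $\epsilon^\flat\in\Ext^1_\Gamma(V_{d-1/i-1,1},L_{1,1})$ with $p_1$ the class of $V_{d/i-1,1}$, $i=i_1(d)$. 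These are dual mechanisms and, as long as $i_1(2)=1$, your version is correct for the second bullet; the Kummer property of $\nabla^{\sharp,\mathrm{ad}}$ is preserved by the same reasoning as in the paper for $\nabla^{\flat,\mathrm{ad}}$ (the corner adjustment leaves the mod~$p^r$ reduction and all quotients $V^\sharp_j/V^\sharp_1$ unchanged, and mod~$p$ the relevant extensions are non-split since $i_1(k)\geq 1$).

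The gap is the missing case $i_1(d)=1$ (with $i_1(k)\geq 1$ for $2\leq k\leq d-1$). You correctly observe that Lemma \ref{lem 2 glue Kummer} needs $i_1(d)\geq 2$, but you then only describe a section-passing argument ``at such a $k$'', where ``such a $k$'' means $i_1(k)=0$. If $i_1(d)=1$ there is no mod~$p$ splitting $s\colon L_{d,1}\to V_{d,1}$, so you have no section to pass through Kummer condition~(4); and your adjustment trick is unavailable because the conclusion of Lemma \ref{lem 2 glue Kummer} fails. This case is not an edge curiosity: it contains the entire base case $d=2$ with $V_{2,1}$ non-split, which is exactly the step that forces you to lift an extension class in $H^1(\Gamma,\Z/p^r(1))$ to $H^1(\Gamma,\Z/p^{r+1}(1))$ and therefore to invoke the W90 property (via Lemma \ref{lem Kummer surj}). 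As written, your proof never invokes W90, which cannot be right, since the theorem fails without it. Concretely, for $d=2$ your ``central adjustment'' would twist the unique graded piece $L_{2,r+1}$ by a nontrivial mod~$p$ character, destroying Kummer condition~(2), so the mechanism really is unavailable there. The paper handles $i_1(d)=1$ for $d>2$ by splitting off a rank-one quotient on the $\nabla^\sharp$ side, forming the $2$-dimensional sub-flag $\nabla^!_{2,r}$, lifting it by the $d=2$ case (hence via W90), and reconstructing $\nabla_{d,r+1}$ by a pushout over $L_{1,r+1}$; an analogous dual construction is needed to complete your proof of the second bullet.
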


Before  the proof, we give a straightforward corollary. Let  $$\nabla_{d,r}=V_{1,r} \subset V_{2,r} \subset \ldots \subset V_{d,r}$$ be a  Kummer flag of $(\Gamma,r)$-bundles, for some $r \geq 2$. Then, every  $c \in H^1(\Gamma, V_{d,1})$ can be seen as the class of an extension of $(\Gamma,1)$-bundles $$0 \to V_{d,1} \to V_{d+1,1} \to \Z/p \to 0, $$ which determines a $(d+1)$-dimensional  Kummer flag of $(\Gamma,1)$-bundles  $$\nabla_{d+1,1}:=V_{1,1} \subset V_{2,1} \subset \ldots \subset V_{d,1}\subset V_{d+1,1}.$$ Thus,  Theorem  \ref{ThmLiftK} implies the following.
\begin{coro}\label{CoroLiftK}
Let $r \geq 2$ be an integer. Assume that $\Z/p^r(1)\simeq \Z/p^r$.
Let $V$ be a $(\Gamma,r)$-bundle, which fits into a  Kummer $(\Gamma,r)$-flag  $$\nabla_{d,r}=V_{1,r} \subset V_{2,r} \subset \ldots \subset V_{d,r}=V.$$   Then, the natural map  \[ H^1(\Gamma, V) \to  H^1(\Gamma, V/p)\] is surjective.
\end{coro}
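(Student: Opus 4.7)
The plan is to translate the claimed surjectivity into a lifting problem for a Kummer flag of dimension one larger than $d$, which then follows by iterating Theorem~\ref{ThmLiftK}. Concretely, given a class $c \in H^1(\Gamma, V/p) = H^1(\Gamma, V_{d,1})$, view $c$ as the class of an extension of $(\Gamma,1)$-bundles
\[ 0 \to V_{d,1} \to V_{d+1,1} \to \Z/p \to 0, \]
and stack $V_{d+1,1}$ on top of $\nabla_{d,1}$ (the mod $p$ reduction of $\nabla_{d,r}$) to obtain a $(d+1)$-dimensional complete $(\Gamma,1)$-flag $\nabla_{d+1,1}$. Since the Kummer hypothesis forces all graded pieces $L_{k,r}$ of $\nabla_{d,r}$ to be trivial, so are those of $\nabla_{d,1}$, and the new top piece is $\Z/p$; hence, by Remark~\ref{RemKum1}, $\nabla_{d+1,1}$ is a Kummer $(\Gamma,1)$-flag.

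Next I would iteratively lift $\nabla_{d+1,1}$ through Kummer flags $\nabla_{d+1,R}$ for $R=2,\ldots,r$, prescribing at every stage the truncation to be the reduction of the given $\nabla_{d,r}$. To do so, I first check that the reduction of a Kummer flag remains Kummer: the invariant $i_R(k)$ satisfies $i_1(k) \leq i_R(k) \leq i_r(k) = i_1(k)$ (a splitting mod $p^r$ reduces to one mod $p^R$, which reduces to one mod $p$), so $i_R(k) = i_1(k)$, and the remaining conditions in Definition~\ref{def kummer} are manifestly stable under reduction. In particular, the mod $p^{R+1}$ reduction $\nabla^{\flat}_{d,R+1}$ of $\nabla_{d,r}$ is a Kummer lift of the truncation $\nabla_{d,R}$ of $\nabla_{d+1,R}$. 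Applying the second bullet of Theorem~\ref{ThmLiftK} with $\nabla_{d+1,R}$ and $\nabla^{\flat}_{d,R+1}$ (the cyclotomic hypothesis $\Z/p^{R+1}(1)\simeq\Z/p^{R+1}$ holds, being the reduction of $\Z/p^r(1)\simeq\Z/p^r$), we produce a Kummer lift $\nabla_{d+1,R+1}$ of $\nabla_{d+1,R}$ whose truncation is $\nabla^{\flat}_{d,R+1}$.

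After $r-1$ such steps we obtain a Kummer $(\Gamma,r)$-flag $\nabla_{d+1,r}$ of dimension $d+1$, with truncation $\nabla_{d,r}$ and top graded piece $\Z/p^r$. The associated extension of $(\Gamma,r)$-bundles
\[ 0 \to V_{d,r} \to V_{d+1,r} \to \Z/p^r \to 0 \]
defines a class $\tilde c \in \Ext^1_{(\Gamma,r)}(\Z/p^r, V_{d,r}) = H^1(\Gamma, V)$, whose reduction mod $p$ is, by construction, the extension class $c$ we started with. This proves the surjectivity of $H^1(\Gamma,V) \to H^1(\Gamma,V/p)$.

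The main obstacle is essentially bookkeeping rather than mathematical: verifying that reductions of Kummer flags are Kummer, and that the truncations line up correctly at every stage so that Theorem~\ref{ThmLiftK} can be iterated $r-1$ times. The substantive lifting content of the argument is entirely encapsulated in the already-proved Theorem~\ref{ThmLiftK}.
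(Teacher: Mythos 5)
Your proof is correct and takes the same route the paper indicates: view $c$ as an extension class, form the $(d+1)$-dimensional Kummer $(\Gamma,1)$-flag $\nabla_{d+1,1}$, and iterate the second bullet of Theorem~\ref{ThmLiftK} to climb from mod $p$ up to mod $p^r$, prescribing at each stage that the truncation agree with the reduction of $\nabla_{d,r}$. The paper states this as an immediate consequence of Theorem~\ref{ThmLiftK}; you have merely spelled out the iteration and the (true, if not quite ``manifest'' for condition (4), which needs a section mod $p^R$ to lift to one mod $p^r$) fact that reductions of Kummer flags are Kummer.
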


We go back to the proof of Theorem \ref{ThmLiftK}.

\begin{proof}
It is enough to prove the first statement of the Theorem: the second one follows by duality, given that the dual of a Kummer flag is again a Kummer flag, and taking duals swaps subobjects and quotients. \\
We proceed by induction on $d$. If $d=1$ or $d=2$, the proof is identical to that of Proposition \ref{propploye}.\\
Assume that $d > 2$ and that the result holds for all Kummer flags of dimension $< d$. Let $\nabla_{d,r} = (V_{i,r})_{1 \leq i \leq d}$ be a $d$-dimensional Kummer flag, and let $\nabla^\sharp_{d-1,r+1}$ be as in the statement of the first part of the Theorem.
    
  Consider the natural $(d-1)$-dimensional Kummer flags $\nabla_{d-1,r} \subset \nabla_{d, r}$ and $\nabla_{d/1,r} := \nabla_{d, r}/V_{1,r}$ associated to $\nabla_{d, r}$. 
  By induction, there exists a Kummer lift $\nabla_{d-1,r+1}^\flat$ of $\nabla_{d-1,r}$ compatible with $\nabla_{d-2,r+1}^{\sharp}$.
    
    \begin{itemize}[leftmargin=*]
        \item Assume first that there exists $2 \leq k \leq d-1$ such that $i_1(k)=0$, i.e. that the modulo $p$ extension 
    \[0 \to V_{k-1, 1} \to V_{k,1} \to L_{k, 1} \to 0\]
        splits. Since $\nabla_{d-1,r+1}^\flat$ is Kummer, the modulo $p^{r+1}$ extension 
        \[0 \to V_{k-1, r+1} \to V_{k,r+1} \to L_{k, r+1} \to 0\]
    splits. The choice of a section $s : L_{k,r+1} \to V_{k,r+1}$ determines a 1-dimensional direct factor $s(L_{k,r+1})$ of $\nabla_{k,r+1}^\flat$, hence 1-dimensional $(\Gamma,r+1)$-submodules of $\nabla^\sharp_{d-1,r}$ and of $\nabla_{d,r}$. In particular, we have a cartesian commutative diagram modulo $p^r$:
    \[
    \xymatrix{
    \nabla_{d,r} \ar[r] \ar[d] & \nabla^\sharp_{d-1, r} \ar[d] \\
    \nabla^{s}_{d-1,r} := \nabla_{d,r}/s(L_{k,r}) \ar[r] & \nabla^{\sharp,s}_{d-2,r} := \nabla^\sharp_{d-1, r} / s(L_{k,r}) \, .
    }
    \]
    By induction, there exists a lift $\nabla_{d-1, r+1}^s$ of $\nabla_{d-1,r}^s$ compatible with $\nabla_{d-2, r+1}^{\sharp, s} := \nabla_{d-1, r+1}^{\sharp} / s(L_{k,r+1})$. \\
    Then one defines a lift $\nabla_{d, r+1}$ of $\nabla_{d, r}$ as the following cartesian diagram
    \[
    \xymatrix{
    \nabla_{d,r+1} \ar[r] \ar[d] & \nabla^\sharp_{d-1, r+1} \ar[d] \\
    \nabla^{s}_{d-1,r+1} \ar[r] & \nabla^{\sharp,s}_{d-2,r+1} \, .
    }
    \]
    By Lemma \ref{lem 1 glue Kummer}, $\nabla_{d,r+1}$ is a Kummer lift of $\nabla_{d,r}$, extending $\nabla^\sharp_{d-1,r+1}$ (and $\nabla^\flat_{d-1,r+1}$).
        \item Assume now that $i_1(d)=0$, i.e. that the extension 
        \[0 \to V_{d-1, 1} \to V_{d,1} \to L_{d, 1} \to 0\]
        splits. Since $\nabla_{d,r}$ is Kummer, then the analogous extension splits modulo $p^r$. Then we define $\nabla_{d,r+1}$ as the direct sum $\nabla^\flat_{d-1,r+1} \oplus L_{d,r+1}$.
        Then $\nabla_{d,r+1}$ is the required Kummer lift of $\nabla_{d,r}$.
        
          \item Assume now that $i_1(d)=1$, i.e. that the extension 
        \[0 \to V_{d-1/1, 1} \to V_{d/1,1} \to L_{d, 1} \to 0\]
        splits. Since $\nabla^\sharp_{d-1,r+1}$ is Kummer, the analogous extension splits modulo $p^{r+1}$. The choice of a splitting defines a $2$-dimensional subflag $\nabla^!_{2,r}$ of $\nabla_{d,r}$, and $\nabla_{d,r}$ appears in the following pushout diagram of inclusions
        \[
        \xymatrix{
        \nabla_{1,r} \ar[r] \ar[d] & \nabla_{d-1, r} \ar[d] \\
        \nabla^!_{2,r} \ar[r] & \nabla_{d,r} \, .
        }
        \]
        By the $2$-dimensional case, the flag $\nabla^!_{2,r}$ lifts to a flag $\nabla^!_{2,r+1}$ compatible with both $\nabla^{\flat}_{1,r+1}=L_{1,r+1}$ and $L_{d,r+1}$, and one can define the required lift $\nabla_{d,r+1}$ by the following pushout diagram of inclusions
        \[
        \xymatrix{
        \nabla^\flat_{1,r+1} \ar[r] \ar[d] & \nabla^\flat_{d-1, r+1} \ar[d] \\
        \nabla^!_{2,r+1} \ar[r] & \nabla_{d,r+1} \, .
        }
        \]
        Finally, one checks that the flag $\nabla_{d,r+1}$ we just defined is Kummer and satisfies the statement of the Theorem.
          
          \item Assume finally that $i := i_1(d) \geq 2$ (and that we are not in the previous cases). \\
          Then the modulo $p$ extension 
    \[0 \to V_{d-1/i-1, 1} \to V_{d/i-1,1} \to L_{d, 1} \to 0\]
    does not split, while the extension
    \[0 \to V_{d-1/i, 1} \to V_{d/i,1} \to L_{d, 1} \to 0\]
    does split.
    In particular, the choice of a splitting of the second one defines a non-split extension
    \[
    0 \to L_{i,1} \to P_{i,d,1} \to L_{d,1} \to 0 \, .
    \]
By assumption $i_1(i) \neq 0$,  so that the following extension does not split:
    \[0 \to V_{i-1, 1} \to V_{i,1} \to L_{i, 1} \to 0.\]
    The argument is now very similar to the proof of Proposition \ref{propploye}. \\
    The obstruction to glue $\nabla^{\flat}_{d-1,r+1}$ and $\nabla^{\sharp}_{d-1,r+1}$ to a lift $\nabla_{d,r+1}$ of $\nabla_{d,r}$, is a class $$c_1 \in H^2(\Gamma, L_{d,1}^\vee \otimes L_{1,1})$$ (see the discussion after Definition \ref{defigluift}). If $c_1=0$, then  gluifting can be done, and $\nabla_{d,r+1}$ is automatically a Kummer flag, compatible with  $\nabla^{\sharp}_{d-1,r+1}$. \\
   Assume that $c_1 \neq 0$. We are going to modify $\nabla^{\flat}_{d-1,r+1}$, so that gluifting becomes possible. \\ 
    By assumption, the extension 
   \[0 \to V_{d-1/i-1, 1} \to V_{d/i-1,1} \to L_{d, 1} \to 0\]
   does not split, nor the twist of its dual by $L_{1,1}$:
   \[0 \to L_{d,1}^\vee \otimes L_{1,1} \to V_{d/i-1,1}^\vee \otimes L_{1,1} \to V_{d-1/i-1, 1}^\vee \otimes L_{1, 1} \to 0 \, .\] Denote its class by $$p_1 \in \Ext^1_\Gamma(V_{d-1/i-1, 1}^\vee, L_{d,1}^\vee) \simeq H^1(\Gamma, L_{d,1}^\vee \otimes \left(V_{d-1/i-1, 1}\right)) \, .$$ 
   Since $p_1\neq 0$, Lemma \ref{LemManaLift} implies that there exists a class $$\epsilon^{\flat} \in H^1(\Gamma, V_{d-1/i-1, 1}^\vee \otimes L_{1, 1}) = \Ext^1_\Gamma(V_{d-1/i-1, 1}, L_{1, 1}) \, ,$$ such that $p_1 \cup \epsilon^{\flat} = c_1$. 

   Consider the extension $i_* \epsilon^{\flat} \in \Ext^1_\Gamma(V_{d-1/i-1, r+1}, L_{1, r+1})$ and its pull-back $\widetilde{i_* \epsilon^{\flat}} \in \Ext^1_\Gamma(V_{d-1/1, r+1}, L_{1, r+1})$ by the  surjection $V_{d-1/1,r+1} \to V_{d-1/i-1, r+1}$. Define $V^{\flat, \mathrm{ad} }_{d-1,r+1} \in \Ext^1_\Gamma(V_{d-1/1, r+1}, L_{1, r+1})$ as the Baer sum of the extensions $V^\flat_{d-1,r+1}$ and the opposite of $\widetilde{i_* \epsilon^{\flat}}$ in this Ext group. Then $V^{\flat, \mathrm{ad} }_{d-1,r+1}$ is a new lift of $\nabla^{\flat}_{d-1,r}$, arising as a small deformation of $\nabla^{\flat}_{d-1,r+1}$. Note that $\nabla^{\flat, \mathrm{ad} }_{d-1,r+1}$ is Kummer since none of the $i_1(k)$ are zero, for $2 \leq k \leq d-1$. Denote by  $$c_1^{\mathrm{ad}} \in H^2(\Gamma, L_{d,1}^\vee \otimes L_{1,1})$$  the obstruction to gluing $\nabla^{\sharp}_{d-1,r+1}$ and $\nabla^{\flat, \mathrm{ad}}_{d-1,r+1}$ to a lift $\nabla_{d,r+1}$ of $\nabla_{d,r}$. Using the explicit constructions of the obstruction classes $c_1$ and $c_1^{\mathrm{ad}}$ in Lemma \ref{lem gluifting obs}, one computes: $$c_1^{\mathrm{ad}}=c_1-p_1 \cup \epsilon^{\flat}=0. $$
Therefore, $\nabla^{\flat, \mathrm{ad}}_{d-1,r+1}$ and $\nabla^{\sharp}_{d-1,r+1}$ glue, to the desired $\nabla_{d,r+1}$, that is Kummer by Lemma \ref{lem 2 glue Kummer}.\qedhere
\end{itemize}\end{proof}

One could hope for a statement generalizing both Proposition \ref{propploye} and Theorem \ref{ThmLiftK}, with no assumptions on roots of unity in the arithmetic case and with a suitable notion of Kummer flag. This cannot be achieved using only cyclotomic twists for the $L_k$'s, as shown in the following example.
\begin{ex}\label{ex5dim}
Let $K = \Q_2$ (resp. $\Q_\ell$, with $\ell \equiv 3 \, \pmod{4}$) and $p=2$. The map $$\alpha : H^1(K, \Z/4\Z) \to H^1(K, \Z/2\Z),$$ induced by reduction modulo $2$, is not surjective: its image is a subgroup of index $2$. More precisely, identifying $H^1(K, \Z/2\Z)$ with $K^\times / (K^\times)^2$, it is classical that for all $x \in K^\times$, $(x) \in H^1(K, \Z/2\Z)$ lifts to $H^1(K, \Z / 4\Z)$ if and only if $$(x) \cup (x) = (x) \cup (-1)=0 \in H^2(K, \Z / 2\Z).$$ Hence  $H^1(K, \Z/2\Z) \setminus \im(\alpha) = \{(-1),(-2),(-5),(-10)\}$ (resp. $H^1(K, \Z/2\Z) \setminus \im(\alpha) = \{(-\ell),(\ell)\}$).

Let $\varepsilon := (-2)$ and $\varepsilon' := (-5)$ in $H^1(K, \Z / 2 \Z)$ (resp. $\varepsilon := (-\ell)$ and $\varepsilon':=(\ell)$). Then $\varepsilon, \varepsilon' \notin \im(\alpha)$ and $\varepsilon \cup \varepsilon' = 0$.

The relation $\varepsilon \cup \varepsilon' = 0$ implies that the representations of $\Gamma_K$ defined by $\rho_1 := \left(\begin{array}{ccc} 1 & \varepsilon &  \varepsilon \\ 0 & 1 & 0 \\ 0 & 0 & 1 \end{array}\right)$ and $\rho_2 := \left(\begin{array}{ccc} 1 & 0 &  \varepsilon' \\ 0 & 1 & 0 \\ 0 & 0 & 1 \end{array}\right)$ glue to a four-dimensional representation $\rho_3 := \left(\begin{array}{cccc} 1 & \varepsilon &  \varepsilon & \ast \\ 0 & 1 & 0 & \varepsilon' \\ 0 & 0 & 1 & 0 \\ 0 & 0 & 0 & 1 \end{array}\right)$. Similarly, the representations $\rho_2$ and $\rho_4 := \left(\begin{array}{ccc} 1 & 0 &  \varepsilon' \\ 0 & 1 & \varepsilon \\ 0 & 0 & 1 \end{array}\right)$ glue to a four-dimensional representation $\rho_5 := \left(\begin{array}{cccc} 1 & 0 & \varepsilon' & \ast \\ 0 & 1 & 0 & \varepsilon' \\ 0 & 0 & 1 & \varepsilon \\ 0 & 0 & 0 & 1 \end{array}\right)$.

Finally, the obstruction to glue $\rho_3$ and $\rho_5$ (along $\rho_2$) to a five-dimensional representation lies in $H^2(K, \Z/2\Z)$. Up to modifying the top right hand side coefficient of $\rho_3$ by an element in $H^1(K, \Z / 2\Z)$, one can assume that this obstruction is trivial, hence $\rho_3$ and $\rho_5$ glue to a representation $\bar \rho : \Gamma_K \to \GL_5(\F_2)$ defined by
\[\bar \rho := \left(\begin{array}{ccccc} 1 & \varepsilon &  \varepsilon & \ast & \ast \\ 0 & 1 & 0 & \varepsilon' & \ast \\ 0 & 0 & 1 & 0 & \varepsilon' \\ 0 & 0 & 0 & 1 & \varepsilon \\ 0 & 0 & 0 & 0 & 1 \end{array}\right)\]
and we denote by $\nabla_{5,1}$ the associated flag.

Then $\bar \rho$ does not lift to a representation $\rho : \Gamma_K \to \GL_5(\Z / 4 \Z)$ of the  shape:
\[\rho = \left(\begin{array}{ccccc} \chi_1 & \ast &  \ast & \ast & \ast \\ 0 & \chi_2 & 0 & \ast & \ast \\ 0 & 0 & \chi_3 & 0 & \ast \\ 0 & 0 & 0 & \chi_4 & \ast \\ 0 & 0 & 0 & 0 & \chi_5 \end{array}\right)\]
such that $\chi_i$ are powers of the cyclotomic character.
Equivalently,  $\nabla_{5,1}$ does not lift to a $(\Gamma_K,2)$-flag $\nabla_{5,2}$,   such that $L_{k,2}$ are powers of the cyclotomic character and the extensions $V_{3/2,2}$ and $V_{4/3,2}$ split.\\
Indeed, assume the contrary and denote $\chi_i$ by $\chi^{\xi_i}$ where $\chi$ is the cyclotomic character modulo $4$ and $\xi_i \in \2$. One sees that $\varepsilon, \varepsilon' \in H^1(K, \Z/2\Z)$ then lift to $$H^1(K,\chi_j^\vee \otimes \chi_i) = H^1(K, L_{j,2}^\vee \otimes L_{i,2}) = H^1(K, \Z/4\Z(\xi_i+\xi_j)),$$ for all $(i,j) \in \{(1,2), (1,3), (2,4), (3,5), (4,5)\}$, hence for all such $(i,j)$, one has $\xi_i + \xi_j = 1$, i.e. $\xi_i \neq \xi_j$. Therefore $\xi_1 \neq \xi_2$, $\xi_2 \neq \xi_4$, $\xi_4 \neq \xi_5$, hence $\xi_1 \neq \xi_5$, and $\xi_1 \neq \xi_3$, $\xi_3 \neq \xi_5$, hence $\xi_1 = \xi_5$. So we get the contradiction $\xi_5 \neq \xi_5$.
\end{ex}

\begin{rem}
It follows from \cite[Theorem 6.4.4]{EG} that, in the case $\ell=p=2$, $\ovl\rho$ as in Example \ref{ex5dim} admits a lift to a completely reducible representation $\rho\colon \Gamma_F\to\GL_5(\Z/4\Z)$. However, it is important to the Emerton-Gee method to allow the characters on the diagonal to be arbitrary crystalline characters. For instance if $F=\Q_p$, crystalline characters are unramified twists of powers of the cyclotomic character, and the images of Frobenius under the unramified twists appearing on the diagonal give distinct variables on the Emerton-Gee stack, that provide the necessary freedom to construct lifts for all possible $\ovl\rho$. \\
In the case when $\ell\ne p$, one can deduce from \cite[Section 2.4.4]{CHT} that $\ovl\rho$ admits a lift $\rho\colon\Gamma_F\to\GL_5(\Z/4\Z)$ equipped with a complete flag, and such that the inertia subgroup of $\Gamma_F$ acts unipotently. Then a simple argument shows that the flag for $\rho$ cannot satisfy condition (1) of Definition \ref{def kummer}, in view of the example above.
\end{rem}

    From Proposition \ref{propploye} and Theorem \ref{ThmLiftK}, one  derives the following  much weaker 

\begin{coro}\label{CoroLift}
   Let $\Gamma$ be one of the following groups: 
   \begin{itemize}
       \item the absolute Galois group of a local field $F$ (a finite extension of $\Q_\ell$ or $\F_\ell((t))$, with $\ell=p$ allowed.)
       \item   the topological fundamental group of a closed connected  orientable  surface.
   \end{itemize}
   Let $d \geq 1$ be an integer, and let $$\rho_1: \Gamma \to \GL_d(\F_p)$$ be a mod $p$ representation. The following holds. \begin{enumerate}
        \item {Assume that the representation $\rho_1$ has a unique complete $\Gamma$-invariant flag. Together with this complete flag, it then lifts for all $r \geq 2$ to a representation $$\rho_r: \Gamma \to\GL_d(\Z/p^r \Z),$$ and these lifts can be chosen in such a way that $\rho_{r+1}$ reduces to $\rho_r$ mod $p^r$ for every $r$.}
            \item {In the arithmetic case, assume that $F$ contains all $p^2$-th roots of unity. \\Then $\rho_1$ lifts to a representation $$\rho_2: \Gamma \to \GL_d(\Z/p^2) .$$ }
        
    \end{enumerate} 
\end{coro}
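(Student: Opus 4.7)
\textbf{Part (1)} is immediate from Corollary~\ref{CoroLiftWound}. By Remark~\ref{remwoundunique}, the uniqueness of a complete $\Gamma$-invariant flag for $\rho_1$ means that this flag is wound; and any wound $(\Gamma,1)$-flag is automatically wound Kummer (take $N=0$ in Definition~\ref{defwoundk}, since the defining condition becomes $L_{i,1}\simeq L_{i,1}$). Applying Corollary~\ref{CoroLiftWound} iteratively, I lift this wound Kummer flag to a compatible sequence of wound Kummer $(\Gamma,r)$-flags $\nabla_{d,r}$ for all $r\geq 1$; reading these as triangular representations gives the desired $\rho_r\colon \Gamma\to\B_d(\Z/p^r)\subset\GL_d(\Z/p^r)$, with $\rho_{r+1}$ reducing to $\rho_r$ mod $p^r$ by construction.

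\textbf{For Part (2)}, my plan is to reduce $\rho_1$ to its restriction on a prime-to-$p$ open subgroup $H\subset\Gamma$ where $\rho_1$ becomes unipotent, lift there via Theorem~\ref{ThmLiftK}, and conclude by restriction-corestriction on the obstruction class. Concretely: let $G:=\rho_1(\Gamma)$ be the (finite) image of $\rho_1$ and $P\subset G$ a $p$-Sylow. Since $\U_d(\F_p)$ is a $p$-Sylow of $\GL_d(\F_p)$, I may conjugate $\rho_1$ so that $P\subset\U_d(\F_p)$. Put $H:=\rho_1^{-1}(P)\subset\Gamma$: this is an open subgroup with $[\Gamma:H]=[G:P]$ prime to $p$, and $\rho_1|_H\colon H\to\U_d(\F_p)$. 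In the arithmetic case $H=\Gal(F^{\mathrm{sep}}/F')$ for a finite extension $F'/F$ of degree prime to $p$, so $\mu_{p^2}\subset F\subset F'$; in the topological case $H$ is the fundamental group of a finite cover of $S_g$, which is again a closed connected orientable surface. In both cases, $H$ is $p$-manageable, $(H,\Z_p(1)|_H)$ satisfies W90, and $\Z/p^2(1)\simeq\Z/p^2$ as $H$-modules, so Theorem~\ref{ThmLiftK} applies over $H$.

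The standard flag on $\rho_1|_H$ has all graded pieces trivial, hence is a Kummer flag by Remark~\ref{RemKum1}. Theorem~\ref{ThmLiftK} then produces a Kummer lift $\rho_2^H\colon H\to\U_d(\Z/p^2)\subset\GL_d(\Z/p^2)$ of $\rho_1|_H$. Now the obstruction to lifting $\rho_1$ to a representation $\rho_2\colon\Gamma\to\GL_d(\Z/p^2)$ is a natural class
\[\obs(\rho_1)\in H^2\bigl(\Gamma,\mathrm{End}_{\F_p}(V_{d,1})\bigr),\]
where $\Gamma$ acts on $\mathrm{End}_{\F_p}(V_{d,1})$ by conjugation via $\rho_1$. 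Its image under restriction to $H^2(H,\mathrm{End}_{\F_p}(V_{d,1}))$ is the obstruction for $\rho_1|_H$, which vanishes by the previous step. Since $[\Gamma:H]$ is prime to $p$ and the coefficient module is $p$-torsion, the identity $\mathrm{cor}\circ\mathrm{res}=[\Gamma:H]\cdot\mathrm{id}$ makes $\mathrm{res}$ injective on $H^2$; hence $\obs(\rho_1)=0$, and the desired $\rho_2$ exists.

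\textbf{Main obstacle.} All the genuine cohomological work sits in Theorem~\ref{ThmLiftK} and Corollary~\ref{CoroLiftWound}, which the present corollary simply packages. The only mildly subtle point is checking that, after restriction to $H$, the hypotheses of Theorem~\ref{ThmLiftK} are indeed preserved: $p$-manageability transfers because a pro-$p$-Sylow of $\Gamma$ is already contained in $H$, W90 transfers for the same reason, and the root-of-unity assumption transfers because $[\Gamma:H]$ is prime to $p$. Once these inheritance properties are secured, the rest is a classical restriction-corestriction chase.
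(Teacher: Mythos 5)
Your proposal is correct and follows essentially the same route as the paper: Part (1) via Remark \ref{remwoundunique} and Corollary \ref{CoroLiftWound}, and Part (2) by passing to the preimage $H$ of a $p$-Sylow of the image (prime-to-$p$ index), applying Theorem \ref{ThmLiftK} there via Remark \ref{RemKum1}, and descending by a restriction--corestriction argument on the obstruction class in $H^2(\Gamma,\End(V_{d,1}))$. The only cosmetic difference is that you spell out that restriction--corestriction step directly, while the paper delegates it to \cite{DCF0}, Lemmas 3.2 and 3.4.
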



\begin{dem}
In the topological case, we first extend $\rho_1$ to a continuous representation $\widehat{\rho}_1 : \widehat{\Gamma} \to \GL_d(\F_p)$. Lifting $\widehat{\rho}_1$ to continuous representations of $\widehat{\Gamma}$ implies lifting $\rho_1$ to representations of $\Gamma$ (see Remark \ref{rem completion}). From now on, $\Gamma$ denotes the profinite completion $\widehat{\Gamma}$.

In Item (1), observe that the unique flag for $\rho_1$ is necessarily wound (see Lemma \ref{remwoundunique}). The result then follows from the step-by-step liftability statement of Corollary \ref{CoroLiftWound}. Let us prove Item (2), using  a standard technique from group cohomology. 
Denote by $\Gamma_0 \subset  \Gamma $ the open subgroup $\Ker(\rho_1)$. Set $\Gamma^0:=\Gamma  /\Gamma_0$.
    Let $S \subset \Gamma^0$ be a $p$-Sylow subgroup and $\Gamma_S \subset \Gamma$ be the pullback of $S$ in $\Gamma$ (it is a prime-to-$p$ finite index subgroup). By passing to the quotient and restriction, $\rho_1$ gives rise to a representation $\Gamma_S \to \GL_d(\F_p)$. Since $S$ is a $p$-group, this representation is conjugate to a strictly upper triangular representation $\Gamma_S \to \mathbf U_d(\F_p)$. Define $V_d:=\F_p^d$, viewed as a $(\Gamma_S,1)$-module via  $\rho_{1\vert \Gamma_S}: \Gamma_S \to \GL_d(\F_p)$. By Remark \ref{RemKum1}, it follows that $V_d$ can be equipped with a Kummer flag $\nabla_{d,1}$.  Applying  Theorem \ref{ThmLiftK} (to $\Gamma_S$ and $r=1$) one gets that $\nabla_{d,1}$ lifts mod $p^2$. One then concludes using \cite{DCF0}, Lemmas 3.2 and 3.4.
\end{dem}

\medskip
\begin{rem}

No assumption on roots of unity is required in Corollary \ref{CoroLift}(1), thanks to the (strong) assumption that $\rho_1$ admits a unique complete $\Gamma$-invariant flag. Also, in Corollary \ref{CoroLift}(2), one can replace the assumption on roots of unity, by the weaker condition $F(\mu_p) = F(\mu_{p^2})$, see Remark \ref{rem kummer general}.
\end{rem}
\begin{rem}
   In the topological case, it would be interesting to provide an alternative elementary proof of item (2) of the Corollary, using the presentation of  $\pi_1(S)$ by $2g$ generators $X_i, Y_i$, with  the single relation   $[X_1,Y_1] \ldots [X_g,Y_g]=1$. To our knowledge, such a proof is not available in the literature.  Note that, in genus $g=1$, item (2) boils down to a nice exercise: prove that two commuting  elements of $\GL_d(\F_p)$ admit lifts, to  commuting  elements of $\GL_d(\Z_p)$. Note also that item (2), in the  arithmetic case, is proved in \cite{Bo} using the presentation of $\Gamma$ by generators and relations. This approach is much more involved than ours, but works without assumptions on roots of unity. 
\end{rem}

\begin{rem}
   As already mentioned in Section \ref{SectGen}, the preceding Corollary is still true upon replacing $\F_p$ by a field $k$ of characteristic $p$, and $\Z/p^2$ (resp. $\Z_p$) by $\W_2(k)$ (resp. $\W(k)$). The proof  adapts, with minor modifications.
\end{rem}
\begin{rem}
    For Galois groups of local fields,  the proof of  Corollary \ref{CoroLift} is altogether extremely short. By  \cite{EG} or \cite{BIP}, item (1) actually holds unconditionally (dismissing flags).   We hope this can be done by an elementary treatment, as well. 
\end{rem}

\section*{Acknowledgments} 
We are grateful to Julien Marché, who brought to our attention that the method we initially designed to study liftability of   Galois representations of local fields, applies verbatim to representations of fundamental groups of surfaces. Our thanks go to the anonymous referee, and to Gebhard Böckle, Pierre Colmez, Hélène Esnault, Jean-Pierre Serre and Olivier Wittenberg. They made meaningful remarks and helped improve the exposition. \\This work was completed while Mathieu Florence was visiting Nesin Mathematics Village, in \c{S}irince.   He warmly thanks the staff for their kindness and excellent working conditions.

\end{document}